\theoremstyle{plain}
\newtheorem{theorem}{Theorem}
\newtheorem{lemma}[theorem]{Lemma}
\theoremstyle{definition}
\theoremstyle{remark}
\renewcommand{\geq}{\geqslant}
\renewcommand{\leq}{\leqslant}
\begin{document}

\newcommand\AGammaL{\textup{A}\Gamma\textup{L}}
\newcommand\AGL{\textup{AGL}}
\newcommand{\Aut}{\textup{Aut}}
\newcommand\BigO{\textup{O}}
\newcommand{\BO}{\textup{BO}}
\newcommand{\BarHomo}{\,{}^{\overline{\hskip2.5mm}}\,}
\newcommand{\ch}{\textup{\;char\ }} 
\newcommand{\C}{\mathbb{C}}
\newcommand\Der{\textsc{Der}}
\newcommand{\diag}{\textup{diag}}
\newcommand{\e}{\varepsilon}
\newcommand{\E}{\mathbb{E}}
\newcommand{\End}{\textup{End}}
\newcommand{\eps}{\varepsilon}
\newcommand{\ext}{\sqsubset}
\newcommand{\F}{\mathbb{F}}
\newcommand\G{\mathbb{G}}
\newcommand\Gal{\textup{Gal}}
\newcommand\GammaL{\Gamma\textup{L}}
\newcommand\GL{\textup{GL}}
\newcommand\GO{\textup{GO}}
\newcommand\GSp{\textup{GSp}}
\newcommand\im{\textup{im}}
\newcommand\Inn{\textup{Inn}}
\newcommand\IsIsoTo{\lessapprox}
\newcommand\Norm{\textsc{Norm}}
\newcommand\normal{\trianglelefteq}
\newcommand\normalizes{\trianglerighteq}
\newcommand\onto{\twoheadrightarrow}
\newcommand\ord{\textup{ord}}
\newcommand\Out{\textup{Out}}
\newcommand\POmega{\textup{P}\Omega}
\newcommand\PSL{\textup{PSL}}
\newcommand\PSU{\textup{PSU}}
\newcommand{\Q}{\mathbb{Q}}
\newcommand\SL{\textup{SL}}
\newcommand\SO{\textup{SO}}
\newcommand{\Stab}{\textup{Stab}}
\newcommand{\Symp}{\textup{Sp}}
\newcommand{\subheading}[1]{\vskip1.5mm\noindent{\sc #1}}
\newcommand\T{{\textup{T}}}
\newcommand\tensor{\otimes}
\newcommand\W{\mathbb{W}}
\newcommand\Wr{\,\textup{wr}\,}
\newcommand{\VwedgeV}{\Lambda^2 V}
\newcommand{\Z}{\mathbb{Z}}

\newcommand{\w}{\wedge}
\newcommand\Y{\,\textsf{Y}\,}
\newcommand{\Hom}{\textup{Hom}}

\hyphenation{induced sub-fields irred-ucible}

\title[\tiny\upshape\rmfamily $p$-groups having a unique proper non-trivial
  characteristic subgroup]{}

\date{23 July 2010}

\begin{center}{\Large
$p$-groups having a unique proper non-trivial
  characteristic subgroup}
\end{center}

\bigskip

\begin{center}
{\large S.\,P. Glasby}\\
Department of Mathematics\\   
Central Washington University\\
WA 98926-7424, USA\\
{\tt http://www.cwu.edu/$\sim$glasbys/}

\smallskip

{\large P.\ P.\ P\'alfy}\\
Alfr\'ed R\'enyi Institute of Mathematics\\
1364 Budapest, Pf.\ 127, HUNGARY\\
{\tt http://www.renyi.hu/$\sim$ppp/}\\

\smallskip

{\large Csaba Schneider\footnote{Corresponding author. 
Email: csaba.schneider@gmail.com; Fax: +351 217 954 288}}\\
Centro de \'Algebra da Universidade de Lisboa\\
Av. Prof. Gama Pinto, 2\\
1649-003 Lisboa, PORTUGAL\\
{\tt  http://www.sztaki.hu/$\sim$schneider/}
\end{center}

\begin{abstract}
We consider the structure of finite $p$-groups $G$
having precisely three characteristic subgroups, namely $1$, $\Phi(G)$
and $G$. 
The structure of $G$ varies markedly
depending on whether $G$ has exponent $p$ or $p^2$, and, 
in both cases, the study of such groups raises deep problems in representation
theory. 
We present classification theorems for 3- and 4-generator groups, and
we also study the existence of such $r$-generator 
groups with exponent $p^2$ for various values of $r$.
The automorphism group induced on the Frattini quotient is, in various
cases, related to a maximal linear group in Aschbacher's classification scheme.
\end{abstract}

\maketitle
\centerline{\noindent 2010 Mathematics subject classification:
 20D15, 20C20, 20E15, 20F28}

\section{Introduction}

Taunt \cite{T55} considered groups having precisely three
characteristic subgroups. As such groups have a {\bf u}nique proper
non-trivial {\bf c}haracteristic {\bf s}ubgroup, he called these UCS-groups.
He gave necessary, but not sufficient, conditions for the direct power
of a UCS-group to be a UCS-group. Taunt discussed
solvable UCS-groups in \cite{T55}, and promised a forthcoming paper
describing the structure of UCS $p$-groups.
However, his article on UCS $p$-groups was never written. 
The present paper is devoted to the study of these groups.

In our experience UCS $p$-groups are rather elusive, and it is unlikely
that a general classification could be given. However, the study of these
groups does lead to the exploration of very interesting problems in 
representation theory, and some interesting, sometimes even surprising, 
theorems can be proved.

The main results of this paper can be summarized as follows.

\begin{theorem}\label{main}
\begin{enumerate}
Let $G$ be a non-abelian UCS $p$-group where $|G/\Phi(G)|=p^r$.
\item[(a)] If $r=2$, then $G$ belongs to a unique isomorphism class.
\item[(b)] If $r=3$, then $G$ belongs to a unique isomorphism class if $p=2$,
and to one of two distinct isomorphism classes if $p>2$.
\item[(c)] If $r=4$ and either $p=2$ or $G$ has exponent $p$,
then $G$ belongs to one of eight distinct isomorphism classes. 
\item[(d)] Suppose that $r=4$ and $G$ has exponent $p^2$. 
Then $p\neq 5$. Further, if
$p\equiv \pm 1\pmod 5$ then $G$ belongs to a unique isomorphism class; while
if $p\equiv\pm 2\pmod 5$ then $G$ belongs to one of two distinct isomorphism classes.
\item[(e)]
Let $p$ be an odd prime, and let $k$ be a positive integer. Then
there exist non-abelian exponent-$p^2$ UCS-groups
\begin{itemize}
\item[(i)] of order $p^{6k}$ for all $p$ and $k$;
\item[(ii)] of order $p^{10}$ if and only if $p^5\equiv 1\pmod{11}$;
\item[(iii)] of order $p^{14k}$ for all $p$ and $k$.
\end{itemize} 
\end{enumerate}
\end{theorem}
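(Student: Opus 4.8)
The plan is to convert the UCS property into a question in representation theory and then to attack each part by a mixture of direct analysis (for small $r$) and an appeal to Aschbacher's classification of the maximal subgroups of the classical groups (for $r=4,5$). First I would establish the general reduction. If $G$ is a non-abelian UCS $p$-group, then the centre, being a non-trivial proper characteristic subgroup, must equal $\Phi(G)$, and $\Phi(\Phi(G))$, being characteristic and properly contained in $\Phi(G)$, must be trivial; so $G$ has class $2$ and $\Phi(G)=Z(G)$ is elementary abelian and $\exp G\in\{p,p^2\}$. Put $V=G/\Phi(G)$ and $W=\Phi(G)$. The commutator induces an $\Aut(G)$-equivariant surjection $\beta\colon\VwedgeV\onto W$ (since $W\supseteq G'$), and when $\exp G=p^2$ the $p$th-power map induces an $\Aut(G)$-equivariant map $\pi\colon V\to W$, which is linear if $p$ is odd and quadratic with polarization $\beta$ if $p=2$. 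The kernel of $\Aut(G)\to\GL(V)\times\GL(W)$ consists of central automorphisms and the image is the full stabilizer $\{(g,h):h\beta(u\w v)=\beta(gu\w gv),\ h\pi=\pi g\}$ of the data; and since a characteristic subgroup incomparable with $\Phi(G)$ would exhibit $G$ as a non-trivial direct product and hence as abelian, one checks that $G$ is UCS if and only if this stabilizer acts irreducibly on $V$ and on $W$. Exploiting that $G'$, $G^p$ and $\Omega_1(G)$ are all characteristic, one obtains sharper descriptions: in the exponent-$p$ case $G$ is encoded by the subspace $K=\ker\beta\le\VwedgeV$ with $W\cong\VwedgeV/K$, and is UCS iff $\Stab_{\GL(V)}(K)$ is irreducible on $V$ and on $\VwedgeV/K$; in the exponent-$p^2$ case (necessarily $p$ odd) the map $\pi$ is forced to be an isomorphism, so after identifying $W$ with $V$ via $\pi$ the group is encoded by a surjective anticommutative $\F_p$-algebra structure $\beta\colon\VwedgeV\onto V$ on $V\cong\F_p^r$, the order of $G$ is $p^{2r}$, and $G$ is UCS iff the automorphism group of this algebra is irreducible on $V$.

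Parts (a) and (b) now fall out of the small-dimensional cases. For $r=2$ the space $\VwedgeV$ is $1$-dimensional, so only extraspecial groups occur, forcing $Q_8$ when $p=2$. For $r=3$ the exponent-$p$ case gives the unique group of order $p^6$ when $p$ is odd; the exponent-$p^2$ case gives precisely the group built from the cross product on $\F_p^3$, whose automorphism group contains $\SO_3$ and is therefore irreducible on $\F_p^3$; and for $p=2$ a direct check of the possible quadratic data leaves a single group.

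For (c) and (d) the plan is a case analysis on $\dim W$ and on the $\GL(V)$-orbit of $K$ (respectively of $\beta$), organising the subspaces $K$ by means of the non-degenerate pairing $\VwedgeV\times\VwedgeV\to\Lambda^4V\cong\F_p$, and reading off the admissible stabilizers from Aschbacher's theorem in dimension $4$ together with the action of each Aschbacher class on $\VwedgeV$. In the exponent-$p^2$, four-dimensional situation the prime $5$ enters: a surjective anticommutative product on $\F_p^4$ with irreducible automorphism group essentially forces a module for a central cover of $A_5$, which is realizable over $\F_p$ exactly when $p\equiv\pm1\pmod{5}$ (yielding one isomorphism class) and otherwise descends only as a Galois-conjugate pair of classes (when $p\equiv\pm2\pmod{5}$), while $p=5$ is excluded because the relevant module degenerates in the defining characteristic. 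I expect the bulk of the work here to be the Aschbacher-type bookkeeping and, in particular, the verification of irreducibility on $W$ — not merely on $V$ — in every surviving exponent-$p$ case; this is what pins the counts down to $8$ in (c) and to $1$ or $2$ in (d), and I expect it to be the principal obstacle for these parts.

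For (e), the reduction says an exponent-$p^2$ UCS group with $|G/\Phi(G)|=p^r$ has order $p^{2r}$ and amounts to a surjective $r$-dimensional anticommutative $\F_p$-algebra with irreducible automorphism group. So for (i) I would take $r=3k$ and the cross product on $\F_{p^k}^3$: the group $\SO_3(\F_{p^k})$ acts on $\F_p^{3k}$ irreducibly, because the structure constants lie in $\F_p$ and the $3$-dimensional module cannot be realized over a proper subfield; for (iii), identically, take $r=7k$ and the octonion cross product on the $7$-dimensional module of $G_2$ base-changed to $\F_{p^k}$, with $G_2(\F_{p^k})$ acting irreducibly on $\F_p^{7k}$. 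For (ii), with $r=5$: the condition $p^5\equiv1\pmod{11}$ is equivalent to $p$ being a square modulo $11$, hence by quadratic reciprocity to $-11$ being a square modulo $p$, hence to $\PSL_2(11)$ — whose degree-$5$ irreducible characters have field $\Q(\sqrt{-11})$ — having a $5$-dimensional irreducible $\F_p$-module $V$; a $\PSL_2(11)$-equivariant surjection $\VwedgeV\onto V$ then produces the algebra, and with it a UCS group of order $p^{10}$. The converse — that no such group exists when $p^5\not\equiv1\pmod{11}$, in particular when $p=11$ (where $p^5\equiv0$) — is the hard part: using Aschbacher's theorem in dimension $5$ one eliminates the imprimitive, field-extension and extraspecial-normalizer classes (for $p$ odd none of these carries a non-trivial invariant anticommutative product), leaving only the almost-simple class, where one shows that the sole possibility is the $5$-dimensional module of $\PSL_2(11)$, whose existence over $\F_p$ forces $p^5\equiv1\pmod{11}$. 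As with (c) and (d), this converse rests on the classification of the irreducible linear groups of the relevant small dimension, which is where I expect the difficulty to lie.
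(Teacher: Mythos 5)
Your general reduction (class $2$, $\Phi(G)=Z(G)$ elementary abelian, UCS iff the stabilizer of the data is irreducible on $V$ and on $W$, exponent-$p^2$ groups encoded by a surjection $\VwedgeV\onto V$) is essentially the paper's Theorem~\ref{3th}/Theorem~\ref{UCSconstr}, and parts (a), (b), (e)(i) follow the same route; your (e)(iii) via the octonion cross product and $G_2(p^k)$ is a genuine and attractive alternative to the paper's construction from $\AGammaL_1(8)$. The serious problems are in (d) and (e)(ii), where you have identified the wrong linear groups. In dimension $4$ the relevant irreducible ESQ-subgroups are \emph{not} modules for a central cover of $A_5$: a faithful $4$-dimensional module of $2.A_5$ contains the scalar $-I$, which is impossible for an ESQ-group (a scalar $\lambda I$ forces $\lambda^2=\lambda$), and for $A_5$ itself $\Lambda^2$ of the $4$-dimensional module is the sum of the two $3$-dimensional modules, with no $4$-dimensional quotient. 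The correct answer (the paper's Theorem~\ref{d=4}) is that the group must lie in the Frobenius group $\AGL_1(5)$ of order $20$ and contain its $C_5$; consequently the two classes for $p\equiv\pm2\pmod 5$ are not a ``Galois-conjugate pair'' but one group whose automorphisms induce the full group of order $20$ and one where they induce only $C_5$ (possible exactly because $5$ is then a non-square mod $p$, so $C_5$ is irreducible on $\F_p^4$). Even with the right group, the counts $1$ and $2$ require an orbit analysis on the admissible submodules $N$ (fusion of the $p-1$ candidates by diagonal automorphisms, and a fifth-power-coset/Frobenius computation over $\F_{p^4}$ in the $\pm2$ case), which your plan does not address; likewise in (c) the actual count of $8$ is never derived, and the $p=2$ case of (c) — where exponent-$4$ groups such as Suzuki-type $2$-groups occur and the paper falls back on a machine classification up to order $2^{10}$ — is not covered by your odd-characteristic pairing scheme at all.

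Part (e)(ii) as proposed would fail in both directions. There is no $\PSL_2(11)$-equivariant surjection $\VwedgeV\onto V$ for the $5$-dimensional module: in characteristic $0$ the character of $\Lambda^2 V$ is the irreducible character of degree $10$, and in odd characteristic the restriction-to-$A_4$ argument of the paper's Theorem~\ref{d=5} (the composition factors of $V|_{A_4}$ are $3+1+1$, while $\Lambda^2 V|_{A_4}$ has only one $1$-dimensional factor) excludes every almost simple candidate, $\PSL_2(11)$ included. So your existence construction produces no group, and your converse is exactly backwards: the irreducible ESQ-subgroups of $\GL_5(p)$ that do occur lie precisely in the classes you propose to eliminate, namely a cyclic group of order $11$ of Singer (field-extension) type when $\ord_{11}(p)=5$, and the imprimitive monomial Frobenius group of order $55$ inside $\AGammaL_1(11)$, which carries an invariant ESQ structure by the paper's Theorem~\ref{AGammaL} whenever $p^5\equiv1\pmod{11}$. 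The coincidence that the character field $\Q(\sqrt{-11})$ criterion for $\PSL_2(11)$ reproduces the condition $p^5\equiv1\pmod{11}$ masks the error, but neither the existence nor the non-existence half of your argument is sound as written.
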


Parts~(a) and~(b) of Theorem~\ref{main} follow from Theorem~\ref{r23}, while part~(c)
follows from Theorems~\ref{4th} and~\ref{4gen}.
Part~(d) is verified in Theorem~\ref{thm-expsquare} 
and the proof of~(e)
can be found at the end of Section~\ref{s7}.

As UCS $p$-groups have precisely three characteristic subgroups, 
they must have exponent $p$ or $p^2$.  
Abelian UCS $p$-groups are all of the form
$(C_{p^2})^r$ where $p$ is a prime.
Non-abelian UCS $p$-groups with exponent $p$ are quite common, and so
we could investigate them only for small generator number 
(at most 4). However, it seems that non-abelian UCS $p$-groups with 
exponent $p^2$ are less common, and we could even prove that for certain
choices of the pair $(p,r)$ in Theorem~\ref{main} 
they do not exist. For instance Theorem~\ref{main}(d) implies that there is
no non-abelian UCS 5-group with 4~generators and exponent 25.
In both cases, the study of such groups leads to challenging
problems in representation theory. A well-written introduction to
$p$-groups of Frattini length~2 can be found in \cite{GQ06}.

Let $p$ be an odd prime, and let $G$ be a non-abelian $r$-generator 
UCS $p$-group with exponent $p$. Then $G$ has the form
$H/N$ where $H$ is the $r$-generator, free group with exponent $p$ and 
nilpotency class~2, and $N$ is a proper 
subgroup of $H'$. The groups $H/H'$ and $H'$
are elementary abelian, and so they can be considered as vector spaces 
over $\F_p$. Moreover, the group $\GL_r(p)$ acts on $H/H'$ in the natural 
action, and on $H'$ in the exterior square action.
It is proved in Theorem~\ref{3th} that $G$ is UCS if and only if the 
(setwise) stabilizer $K:=\GL_r(p)_N$ is irreducible on both $H/H'$ and $H'/N$. Further,
an irreducible subgroup $K\leq\GL_r(p)$ and an irreducible $K$-factor module 
$H'/N$
lead to a UCS $p$-group with exponent $p$ (Theorem~\ref{UCSconstr}). 
Thus the investigation of the class of UCS 
$p$-groups with exponent $p$ is reduced to the study of irreducible
subgroups $K$ of $\GL_r(p)$ and the maximal $K$-submodules
of the exterior square~$\Lambda^2(\F_p)^r$.

The study of UCS 2-groups poses different problems than the odd case. In this
paper we concentrate on UCS groups of odd order, and prove only a few results
about UCS 2-groups.

A non-abelian UCS $p$-group $G$ with \emph{odd} order and exponent $p^2$,
is a {\it powerful} $p$-group (i.e. $G^p\leq G'$). Moreover, $G$ has
the form $H/N$ where $H$ is the $r$-generator free group with
$p$-class~2 and exponent~$p^2$ and $N$ is a subgroup of $\Phi(H)$. 
As above, $\GL_r(p)$ acts on 
$H/\Phi(H)$ in the natural action and $\Phi(H)=H^p\oplus H'$
can also be viewed as a $\GL_r(p)$-module.
The commutator subgroup $G'$ is isomorphic to $H'/(H'\cap N)$ and,
as $G$ has one proper non-trivial characteristic subgroup, 
$G'$ must coincide with $G^p$. On the other hand, as the $p$-th power 
map $x\mapsto x^p$ is a homomorphism, the $\GL_r(p)_N$
actions on $H/\Phi(H)$ and on $H'/(H'\cap N)$ must be equivalent.
Further
$\GL_r(p)_N$ must be irreducible on both
$H/\Phi(H)$ and $H'/(H'\cap N)$ 
(see Theorem~\ref{3th} for the proof of the last two assertions).  
Therefore we found an irreducible subgroup
$K$ of $\GL_r(p)$ and a maximal $K$-submodule $N$ in $\Lambda^2(\F_p)^r$
such that $\Lambda^2(\F_p)^r/N$ is equivalent to the natural module of 
$K$. Conversely, such a group $K$ and a submodule $N$ lead to a UCS 
$p$-group with exponent $p^2$ (see Theorem~\ref{UCSconstr}).
Thus UCS $p$-groups with odd exponent $p^2$ give rise to irreducible modules
that are isomorphic to a quotient of the exterior square. We call these
{\it exterior self-quotient} modules (or ESQ-modules). We are planning
to write a paper devoted to the study of ESQ-modules.

In Sections~\ref{s3.5} and~\ref{s5}, UCS $p$-groups with exponent $p$
and generator number at most~4 are studied.
We achieve a complete classification of 2- and 3-generator UCS $p$-groups and,
for odd $p$, a complete classification of $4$-generator UCS $p$-groups with
exponent $p$.
These classifications are made possible in these cases by our knowledge of 
the $\GL_r(p)$-module $W=\Lambda^2(\F_p)^r$. If $r=2$ then $W$ is a 
1-dimensional module and our problem is trivial. 
If $r=3$, then $W$ is equivalent to the dual of the natural
module and the classification of UCS $p$-groups with exponent~$p$ is 
straightforward also in this case. However, the classification of 
$3$-generator UCS $p$-groups with exponent $p^2$ is already 
non-trivial. The fact that there is, up to isomorphism, precisely one
3-generator UCS $p$-group with exponent $p^2$ is a consequence of the fact that
the stabilizer in the general linear group $\GL_3(p)$ of a certain
3-dimensional subspace of $(\F_p)^6$ is the special orthogonal
group $\textup{SO}_3(p)$ (see Lemma~\ref{solemma}).

When $r=4$, the Klein correspondence makes it possible to
obtain the necessary information about the $\GL_r(p)$-module $W$. In this
case $\GL_r(p)$ preserves (up to scalar multiples) a quadratic form on $W$. 
A $p$-group with exponent $p$ corresponds to a subspace $N$ of 
$W$ as explained above. 
These observations and the classification of $p$-groups
with order dividing $p^7$ (see~\cite{p6,p7})
enables us to classify $4$-generator UCS $p$-groups with exponent $p$.
We found it surprising that $N$ leads to a 
UCS $p$-group if and only if the restriction of
the quadratic form to $N$ is non-degenerate.
The details are presented  in Section~\ref{s5}. A brief 
classification of 4-generator UCS 2-groups is given in Section~\ref{s4.5}.

Section~\ref{s7} focusses on the construction of exterior self-quotient (ESQ)
modules for dimensions at most 5. As remarked above, this is directly
related to the construction of UCS $p$-groups with odd exponent $p^2$.
Our results for exponent-$p^2$ UCS groups are summarized in 
Theorem~\ref{main}(d,e), and proved in Section~\ref{s7}.
Theorems~\ref{d=5},~\ref{d=4} are concerned with the structure of
ESQ-modules in dimensions 4 and 5, and are of independent interest.


\section{$p$-groups with precisely 3 characteristic subgroups}\label{s3}

We shall focus henceforth on the structure of a $p$-group $G$ with
precisely three characteristic subgroups. In such a group the Frattini
subgroup $\Phi(G)$ is non-trivial, otherwise $G$ is elementary abelian
and characteristically simple. Therefore the non-trivial,
proper characteristic subgroup of $G$ is $\Phi(G)$. 

Let $p$ be a prime number. 
The {\em lower $p$-central series} of a group $H$ (which may
not be a $p$-group) is
defined as follows: $\lambda_1^p(H)=H$, and
$\lambda^p_i(H)=[\lambda^p_{i-1}(H),H](\lambda^p_{i-1}(H))^p$ for $i\geq 2$. 
For a finite $p$-group $G$,
the subgroup $\lambda_i^p(G)$ is denoted by $\lambda_i(G)$. 
The term 
$\lambda_2(G)$ coincides with $\Phi(G)$. The {\em
  $p$-class} of a finite $p$-group $G$ 
is defined as the smallest integer $c$ such that
$\lambda_{c+1}(G)=1$. 
Clearly, terms of the lower $p$-central series of a finite $p$-group $G$ 
are characteristic, and 
$G$ has $p$-class~1 if and only if $G$ is elementary abelian.  This simple
observation leads to the following lemma.

\begin{lemma}\label{pc2}
The $p$-class of a UCS $p$-group is precisely~$2$.
\end{lemma}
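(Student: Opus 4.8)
The plan is to show that the $p$-class cannot be $1$ and cannot be $\geq 3$, leaving only $2$.

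First I would rule out $p$-class $1$: by the observation immediately preceding the lemma, a finite $p$-group has $p$-class $1$ if and only if it is elementary abelian, hence characteristically simple, hence has only two characteristic subgroups, $1$ and $G$. A UCS $p$-group has three characteristic subgroups by definition, so its $p$-class is at least $2$.

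Next I would rule out $p$-class $c \geq 3$. The terms $\lambda_1(G) = G$, $\lambda_2(G) = \Phi(G)$, $\dots$, $\lambda_c(G)$, $\lambda_{c+1}(G) = 1$ of the lower $p$-central series are all characteristic in $G$, and they form a strictly decreasing chain (strict descent is part of the definition of $p$-class: $c$ is the smallest index with $\lambda_{c+1}(G)=1$, and no term equals its successor before that point, since $\lambda_i(G)=\lambda_{i+1}(G)$ would force the series to stabilise and hence $\lambda_{c+1}(G)=\lambda_i(G)\neq 1$). If $c \geq 3$, then $1 < \lambda_3(G) \leq \lambda_2(G) = \Phi(G) < G$ exhibits $\lambda_3(G)$ as a characteristic subgroup, and since $\lambda_3(G) \neq \lambda_2(G)$ when $c\geq 3$, we get at least four characteristic subgroups $1, \lambda_3(G), \Phi(G), G$ — contradicting the UCS hypothesis. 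Hence $c \leq 2$.

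Combining the two bounds gives $c = 2$, as claimed. I do not anticipate a genuine obstacle here; the only point requiring a little care is justifying that consecutive terms of the lower $p$-central series are distinct up to the $c$-th term, which follows directly from the definition of $p$-class together with the fact that once two consecutive terms coincide the series is constant thereafter.
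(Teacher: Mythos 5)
Your proof is correct and follows exactly the argument the paper intends: the paper gives no explicit proof, merely noting that the terms of the lower $p$-central series are characteristic and that $p$-class~$1$ means elementary abelian, which is precisely how you rule out class~$1$ and class~$\geq 3$. Your extra remark on why consecutive terms must be distinct before the series hits $1$ is a welcome point of care, but otherwise this is the same argument.
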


A UCS $p$-group can be written as a quotient of a suitable free group which we
now describe.
Let $p$ be a prime, and let $r$ be a positive integer. 
Let $F_r$ denote the free group with rank $r$, and set
$$
H_{p,r}=F_r/\lambda^p_3(F_r).
$$ 
The group $H_{p,r}$ is an $r$-generator free group in the variety of groups
with $p$-class~2. 
Since $\lambda_3^p(H_{p,r})=1$, we have that $H_{p,r}$ is an $r$-generator
nilpotent group
whose exponent divides $p^2$, and so $H_{p,r}$ is a finite $p$-group.
The quotient 
$H_{p,r}/\Phi(H_{p,r})$ is elementary abelian with rank $r$.
As $\lambda_3(H_{p,r})=[\Phi(H_{p,r}),H_{p,r}]\Phi(H_{p,r})^p=1$, the subgroup $\Phi(H_{p,r})$ is elementary 
abelian and central. 
Assume that the elements 
$x_1,\ldots,x_r$ form a minimal generating set for $H_{p,r}$. The Frattini subgroup 
$\Phi(H_{p,r})$ is minimally generated by the elements $x_i^p$ and 
$[x_j,x_k]$ with $i,\ j,\ k\in\{1,\ldots,r\}$ and $j<k$. Thus $\Phi(H_{p,r})$ 
has rank $r(r-1)/2+r$.
In $2$-groups, 
the subgroup generated by the squares contains the commutator subgroup, 
so $\Phi(H_{2,r})=(H_{2,r})^2$. 
On the other hand, if $p\geq 3$, then $\Phi(H_{p,r})=(H_{p,r})'\oplus (H_{p,r})^p$. 
In this case,
the commutators $[x_j,x_k]$ with $j<k$ form a minimal generating set for 
$(H_{p,r})'$,  while the $p$-th powers $x_i^p$ form a minimal generating set for $(H_{p,r})^p$. 
Thus $(H_{p,r})'$ and $(H_{p,r})^p$ are elementary abelian with ranks $r(r-1)/2$ and~$r$, respectively.

When investigating UCS $p$-groups $G$, by the following lemma,
we may conveniently assume
that $G$ is of the form $H_{p,r}/N$ 
where $N\leq \Phi(H_{p,r})$. The proof of the following lemma
is straightforward.

\begin{lemma}\label{idgh}
Let $p$ be a prime, let $G$ be an $r$-generator 
UCS $p$-group, and let 
$H_{p,r}$ be the group above. 
Suppose that
$\{x_1,\ldots,x_r\}$ and 
$\{y_1,\ldots,y_r\}$ are minimal generating sets of $H_{p,r}$ and $G$, 
respectively. Then the mapping
$x_i\mapsto y_i$, for $i\in\{1,\ldots,r\}$, can uniquely be extended to an 
epimorphism $\varphi:H_{p,r}\rightarrow G$. Further, the kernel 
of $\varphi$ lies in $\Phi(H_{p,r})$. 
\end{lemma}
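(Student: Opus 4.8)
The plan is to build the epimorphism $\varphi$ by invoking the universal property of the free group $F_r$ and then checking it factors through $H_{p,r}=F_r/\lambda_3^p(F_r)$. First I would let $\tilde\varphi:F_r\to G$ be the unique homomorphism sending the free generators to $y_1,\dots,y_r$; this exists and is unique by freeness. Since $\{y_1,\dots,y_r\}$ generates $G$, the map $\tilde\varphi$ is onto. Next I would observe that $G$ is a UCS $p$-group, hence has $p$-class exactly $2$ by Lemma~\ref{pc2}, so $\lambda_3^p(G)=1$. As $\lambda_3^p$ is a verbal (hence functorial) construction, $\tilde\varphi(\lambda_3^p(F_r))\leq\lambda_3^p(G)=1$, so $\lambda_3^p(F_r)\leq\ker\tilde\varphi$ and $\tilde\varphi$ induces a well-defined epimorphism $\varphi:H_{p,r}=F_r/\lambda_3^p(F_r)\to G$. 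Composing the natural projection $F_r\to H_{p,r}$ with $\varphi$ recovers $\tilde\varphi$, and under this projection the free generators of $F_r$ map to $x_1,\dots,x_r$ (a minimal generating set of $H_{p,r}$, by hypothesis), so $\varphi(x_i)=y_i$ as required. Uniqueness of the extension follows because $x_1,\dots,x_r$ generate $H_{p,r}$: any homomorphism out of $H_{p,r}$ is determined by its values on these generators.

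For the last assertion, that $\ker\varphi\leq\Phi(H_{p,r})$, I would use the compatibility of $\varphi$ with the Frattini quotient. Both $H_{p,r}/\Phi(H_{p,r})$ and $G/\Phi(G)$ are elementary abelian of rank $r$ (for $G$ this is the definition of $r$-generator; for $H_{p,r}$ it was noted before the lemma), and $\varphi$ maps a minimal generating set to a minimal generating set, hence induces an isomorphism $\bar\varphi:H_{p,r}/\Phi(H_{p,r})\to G/\Phi(G)$ of $\F_p$-vector spaces (a surjective linear map between spaces of equal finite dimension). Therefore if $g\in\ker\varphi$ then $\bar\varphi(g\Phi(H_{p,r}))=1$, and injectivity of $\bar\varphi$ forces $g\in\Phi(H_{p,r})$. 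Equivalently one can argue that $\Phi$ is characteristic and $\varphi(\Phi(H_{p,r}))=\Phi(G)$ since $\varphi$ is surjective, so $\ker\varphi\cdot\Phi(H_{p,r})/\Phi(H_{p,r})$ lies in the kernel of $\bar\varphi$, which is trivial.

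I do not expect a genuine obstacle here; the lemma is a bookkeeping statement and the excerpt already flags the proof as straightforward. The only points requiring a word of care are (i) citing Lemma~\ref{pc2} to get $\lambda_3^p(G)=1$ so that $\tilde\varphi$ descends to $H_{p,r}$, and (ii) the dimension-count argument that upgrades the surjection $\bar\varphi$ on Frattini quotients to an isomorphism, which is what pins $\ker\varphi$ inside $\Phi(H_{p,r})$. Both are routine, so the write-up should be just a few lines.
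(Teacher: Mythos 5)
Your proposal is correct, and in fact the paper offers no proof of Lemma~\ref{idgh} at all (it is dismissed as ``straightforward''), so what you have written is precisely the routine argument the authors had in mind: lift to $F_r$ by freeness, kill $\lambda_3^p(F_r)$ using $\lambda_3^p(G)=1$ from Lemma~\ref{pc2}, and locate the kernel inside $\Phi(H_{p,r})$ by comparing the two Frattini quotients, both elementary abelian of rank $r$. The only point to tighten is your parenthetical claim that the free generators of $F_r$ project onto $x_1,\dots,x_r$: the lemma allows $\{x_1,\dots,x_r\}$ to be an \emph{arbitrary} minimal generating set of $H_{p,r}$, not necessarily the canonical images of the free generators, so your construction a priori gives an epimorphism sending the canonical generators (rather than the $x_i$) to the $y_i$. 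This is repaired in one line: the endomorphism of $H_{p,r}$ taking the canonical generators to the $x_i$ (which exists by relative freeness) is surjective, hence an automorphism of the finite group $H_{p,r}$, and precomposing your map with its inverse yields $\varphi(x_i)=y_i$; since $\Phi(H_{p,r})$ is characteristic, the conclusion $\ker\varphi\leq\Phi(H_{p,r})$ and the uniqueness statement are unaffected.
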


Now we introduce some notation that will be used in the rest of the paper.
If $L$~is a group that 
acts on a vector space $V$, then $L^V$ denotes the image of $L$ under 
this action. Hence $L^V\leq\GL(V)$.
The stabilizer in $L$ of an object $X$ is denoted by $L_X$. 
If $G$ is a $p$-group, then $\overline G$ denotes $G/\Phi(G)$.
If $G$ is a UCS $p$-group, then
$\Phi(G)$ and $\overline G$ can be considered as $\F_p$-vector spaces. 
We shall consider the linear groups $\Aut(G)^{\Phi(G)}$  and 
$\Aut(G)^{\overline G}$. 

Set $H=H_{p,r}$. The subgroup $\Phi(H)$ can be 
viewed as a $\GL(\overline{H})$-module as follows. 
Recall that
$H=F_r/\lambda_3^p(F_r)$ and $\Phi(H)=\lambda_2^p(F_r)/\lambda_3^p(F_r)$.
Let $g\in \GL(F_r/\lambda_2^p(F_r))$ 
and $x,\ y\in F_r$. 
Set $(x^p\lambda_3^p(F_r))g=\hat x^p\lambda_3^p(F_r)$ and 
$([x,y]\lambda_3^p(F_r))g=[\hat x,\hat y]\lambda_3^p(F_r)$
where $\hat x$ and $\hat y$ are chosen so that $(x\lambda_2^p(F_r))g=\hat x\lambda_2^p(F_r)$ and $(y\lambda_2^p(F_r))g=\hat y\lambda_2^p(F_r)$. 
It is proved in~\cite[Lemma~2.6]{obrien} 
that this rule defines a unique linear transformation
on $\Phi(H)$ corresponding to $g$.

The following lemma describes
the structure of the $\GL(\overline H)$-module $\Phi(H)$; see for
instance the argument on page~26 in~\cite{hig}. 

\begin{lemma}\label{strmodules}
Let $p$ be a prime, let $r$ be an integer, and set $H=H_{p,r}$.
The subgroup 
$H'$ is a $\GL(\overline H)$-submodule of $\Phi(H)$ and 
$\Phi(H)/H'\cong \overline H$ as $\GL(\overline H)$-modules. If 
$p\geq 3$, then $\Phi(H)=H'\oplus H^p$ is a
direct sum of $\GL(\overline H)$-modules.
In particular, $\overline H\cong H^p$ as $\GL(\overline H)$-modules if $p\geq3$.
\end{lemma}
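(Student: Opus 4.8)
The plan is to argue everything directly from the explicit description of the $\GL(\overline H)$-action on $\Phi(H)$ recalled immediately before the lemma, whose well-definedness is \cite[Lemma~2.6]{obrien}; what remains is careful bookkeeping with the defining formulas. Write $\lambda_i=\lambda_i^p(F_r)$, so that $H=F_r/\lambda_3$, $\Phi(H)=\lambda_2/\lambda_3$ and $\overline H=F_r/\lambda_2$, and recall that $\Phi(H)$ is central of exponent $p$ in $H$ (since $[\Phi(H),H]\leq\lambda_3=1$ and $\Phi(H)^p\leq\lambda_3=1$), so $H$ has nilpotency class at most $2$.

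The submodule claims are immediate from the defining rule: it sends the generator $[x,y]\lambda_3$ of $H'$ to $[\hat x,\hat y]\lambda_3\in H'$, and the generator $x^p\lambda_3$ of $H^p$ to $\hat x^p\lambda_3\in H^p$, so $H'g\leq H'$ and $H^pg\leq H^p$ for every $g\in\GL(\overline H)$, with equality because $g$ is invertible. Hence $H'$ and $H^p$ are $\GL(\overline H)$-submodules of $\Phi(H)$, and for $p\geq 3$ the decomposition $\Phi(H)=H'\oplus H^p$ recorded above is a direct sum of $\GL(\overline H)$-modules.

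To obtain $\Phi(H)/H'\cong\overline H$ I would use the map $\phi\colon\overline H\to\Phi(H)/H'$, $x\Phi(H)\mapsto x^pH'$. The class-$2$ power identity $(xy)^p=x^py^p[y,x]^{p(p-1)/2}$ together with $\Phi(H)$ being central of exponent $p$ shows that $\phi$ is well defined ($(xz)^p=x^p$ for $z\in\Phi(H)$) and a homomorphism (the correction $[y,x]^{p(p-1)/2}$ lies in $H'$, and in fact vanishes for odd $p$). Since $\Phi(H)=H'H^p$ with $H^p$ generated by the $p$-th powers $u^p$, and each $u^pH'=\phi(u\Phi(H))$, the map $\phi$ is onto; it is then an isomorphism because both sides have $\F_p$-dimension $r$ --- indeed $H/H'$ is the free abelian group of exponent $p^2$ on $r$ generators, so $H/H'\cong(C_{p^2})^r$ and hence $\Phi(H)/H'=\Phi(H/H')\cong(C_p)^r$. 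Equivariance of $\phi$ holds essentially by definition of the action: for $\hat x\in H$ with $\hat x\Phi(H)=(x\Phi(H))g$ one has $\phi((x\Phi(H))g)=\hat x^pH'=(x^pH')g=\phi(x\Phi(H))g$. Combining this isomorphism with the module direct sum for $p\geq 3$ yields $\overline H\cong\Phi(H)/H'\cong H^p$, which is the last assertion.

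I expect the only genuine subtlety to be the case $p=2$ of the middle assertion: there the $p$-th power map is \emph{not} a homomorphism $H\to\Phi(H)$ --- the commutator term $[y,x]$ survives --- so one cannot read off $\overline H\cong H^p$ as one can for odd $p$, and one must instead pass to the quotient $\Phi(H)/H'$, where that correction disappears, and argue the dimension count there. The remaining steps are a routine unwinding of the definition of the action.
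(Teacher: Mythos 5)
Your proof is correct. The paper gives no argument for this lemma at all---it only points to the discussion on page~26 of \cite{hig}---and your direct verification from the action defined via \cite[Lemma~2.6]{obrien} is precisely the standard argument that citation stands for: invariance of $H'$ (and, for odd $p$, of $H^p$) is read off the defining rule on the generators $[x,y]$ and $x^p$; the class-2 identity $(xy)^p=x^py^p[y,x]^{p(p-1)/2}$, together with $\Phi(H)$ being central of exponent $p$, makes $x\Phi(H)\mapsto x^pH'$ a well-defined $\GL(\overline H)$-equivariant epimorphism onto $\Phi(H)/H'$ (equivariance using that the rule is independent of the chosen representative $\hat x$, which is exactly the content of the cited lemma of O'Brien); and $H/H'\cong(C_{p^2})^r$ supplies the dimension count, after which projecting onto $H^p$ in the decomposition $\Phi(H)=H'\oplus H^p$ gives $\overline H\cong H^p$ for $p\geq3$. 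Your closing remark also correctly isolates why the last two assertions need $p\geq3$: for $p=2$ squaring is not a homomorphism into $\Phi(H)$, only into $\Phi(H)/H'$, so the statement about $\Phi(H)/H'$ is the best one can do uniformly in $p$.
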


Next we give a description of the automorphism group of $G$
following~\cite[Theorem~2.10]{obrien}. If $G$ is of the form $H_{p,r}/N$,
with some $N\leq \Phi(H_{p,r})$ then the spaces $\overline{H_{p,r}}$ and 
$\overline G$ 
can naturally be identified, and this fact is exploited in the following lemma.

\begin{lemma}\label{autgrp}
Let $p$ be a prime, let $r$ be an integer, and set $H=H_{p,r}$. 
Let $N$ be a subgroup
of $\Phi(H)$, and set $G=H/N$. 
Identifying $\overline G$ and $\overline H$, we obtain that
$$
\Aut(G)^{\overline G}=\GL(\overline H)_N\quad\mbox{and}\quad
\Aut(G)^{\Phi(G)}=(\GL(\overline H)_N)^{\Phi(H)/N}.
$$ 
Further, the kernel $K$ of the $\Aut(G)$-action on $\overline G$ is an
elementary abelian $p$-group of order $|\Phi(G)|^r$, and $K$ acts
trivially on $\Phi(G)$. 
\end{lemma}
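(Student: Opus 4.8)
The plan is to unwind the definitions, invoke the cited structure results, and then analyze the kernel of the action on $\overline G$ directly.

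First I would recall that by Lemma~\ref{idgh} every $r$-generator UCS $p$-group is (isomorphic to) a quotient $H/N$ with $N\leq\Phi(H)$, so working with $G=H/N$ loses no generality; here, though, $N$ is arbitrary in $\Phi(H)$ as stated. The identification of $\overline G$ with $\overline H$ is immediate because $\Phi(H/N)=\Phi(H)/N$, so $\overline G=(H/N)/(\Phi(H)/N)\cong H/\Phi(H)=\overline H$. For the first displayed equality, I would argue that any automorphism of $G$ induces a linear map on $\overline G\cong\overline H$ which, via this identification, must preserve $N$ (since $N$ is recovered inside $\Phi(H)$ as the kernel of $H\to G$), hence lies in $\GL(\overline H)_N$; conversely, by the module construction recalled before Lemma~\ref{strmodules} (from \cite[Lemma~2.6]{obrien}), each $g\in\GL(\overline H)$ acts on $\Phi(H)$, and if $g\in\GL(\overline H)_N$ then this action descends to $\Phi(H)/N=\Phi(G)$ and, combined with a lift to $\overline G$, defines an automorphism of $G=H/N$ — this uses that $H$ is \emph{free} in the variety of $p$-class~2 groups, so any assignment on generators consistent with the module action extends. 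That simultaneously gives the second equality $\Aut(G)^{\Phi(G)}=(\GL(\overline H)_N)^{\Phi(H)/N}$, since the action on $\Phi(G)$ is exactly the restriction to $\Phi(H)/N$ of the $\GL(\overline H)_N$-action on $\Phi(H)$.

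Next I would turn to the kernel $K$ of the $\Aut(G)$-action on $\overline G$. An automorphism $\alpha$ lies in $K$ iff $y_i\alpha\in y_i\Phi(G)$ for each minimal generator $y_i$, i.e. $y_i\alpha=y_i z_i$ with $z_i\in\Phi(G)$. Since $\Phi(G)$ is central (as $H$ has $p$-class~2, $\Phi(H)$ is central, hence so is $\Phi(G)$) and elementary abelian, and since $[y_i,y_j]\mapsto[y_iz_i,y_jz_j]=[y_i,y_j]$ and $y_i^p\mapsto(y_iz_i)^p=y_i^p z_i^p=y_i^p$ (again using centrality and exponent-$p$ of $\Phi(G)$, with $p$ odd or the $2$-group bookkeeping absorbed into $\Phi$), such an $\alpha$ acts trivially on $\Phi(G)=\langle[y_i,y_j],y_i^p\rangle$. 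This proves $K$ acts trivially on $\Phi(G)$. For the structure of $K$: the map $\alpha\mapsto(z_1,\dots,z_r)\in\Phi(G)^r$ is injective because $G$ is generated by $y_1,\dots,y_r$ and $\alpha$ is determined by the images of the generators; it is a homomorphism into the additive group $\Phi(G)^r$ because for $\alpha,\alpha'\in K$ one computes $y_i(\alpha\alpha')=(y_iz_i)\alpha'=y_iz_i'z_i=y_i(z_iz_i')$ (here $z_i\alpha'=z_i$ since $\alpha'$ is trivial on $\Phi(G)$); and it is surjective since \emph{any} choice of $(z_1,\dots,z_r)$ defines, by freeness of $H$ in the variety, an endomorphism of $G$ fixing $\Phi(G)$ pointwise and hence (being identity on both $\overline G$ and $\Phi(G)$) an automorphism. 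Therefore $K\cong\Phi(G)^r$ as groups, an elementary abelian $p$-group of order $|\Phi(G)|^r$.

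The main obstacle I anticipate is the surjectivity claims — that a consistent prescription on generators (for the $\GL(\overline H)_N$-action in the first part, and for the translation vectors $z_i$ in the kernel part) actually extends to a \emph{well-defined} automorphism of $G$ rather than merely an endomorphism or a partial map. The clean way around this is to do everything at the level of $H=H_{p,r}$, where freeness in the variety of $p$-class~2 groups guarantees that any map of generators extends uniquely to an endomorphism of $H$; one then checks the relevant endomorphism of $H$ stabilizes $N$ (automatic for the translations, since they move generators by elements of $\Phi(H)$ which $N$ contains modulo nothing relevant — more precisely one checks $N$ is preserved), descends to $G$, and is bijective because its effect on the composition series pieces $\overline G$ and $\Phi(G)$ is invertible. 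All of this is essentially the content of \cite[Theorem~2.10]{obrien}, which the statement cites, so in the write-up I would lean on that reference for the extension mechanism and restrict the new argument to verifying the specific order and triviality assertions for $K$.
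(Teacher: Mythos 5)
Your argument is correct and is essentially the route the paper takes: the paper states this lemma without an independent proof, citing O'Brien's Theorem~2.10, and your reconstruction (lifting automorphisms via the freeness of $H_{p,r}$ in the variety of $p$-class~2 groups, using the well-definedness of the $\GL(\overline H)$-action on $\Phi(H)$ from O'Brien's Lemma~2.6, and the translation computation for the kernel) is precisely the standard argument behind that citation. The only cosmetic point is that your hedge about ``$p$ odd or $2$-group bookkeeping'' is unnecessary, since $\Phi(H)$ is central and elementary abelian for every prime $p$, so $(y_iz_i)^p=y_i^p$ holds uniformly.
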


Lemma~\ref{autgrp} enables us to characterize UCS $p$-groups.

\begin{theorem}\label{3th}
Let $p$ be a prime, let $r$ be an integer, set $H=H_{p,r}$, and
let  $G=H/N$ where $N\leq\Phi(H)$. 
Then the following are equivalent:
\begin{itemize}
\item[(a)] $G$ is a UCS $p$-group; 
\item[(b)] both $\Aut(G)^{\overline G}$ 
and $\Aut(G)^{\Phi(G)}$ are
irreducible;
\item[(c)] both $\GL(\overline H)_N$ and 
$(\GL(\overline H)_N)^{\Phi(H)/N}$ are irreducible.
\end{itemize}
Further, if $p$ is odd and 
$G$ is a UCS $p$-group, then precisely one of the following must
hold:
\begin{enumerate}
\item[(i)] $N=H'$ and $G$ is abelian;
\item[(ii)] $H^p\leq N$ and $G$ is non-abelian of exponent $p$;
\item[(iii)] $N\cap H^p=1$, $\overline H$ 
and $H'/(N\cap H')$ are equivalent 
$\GL(\overline H)_N$-modules, $G$ is non-abelian of exponent $p^2$, 
and $|G|=p^{2r}$. 
\end{enumerate}
\end{theorem}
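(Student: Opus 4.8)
The plan is to first establish the equivalence of (a), (b), (c), and then to analyse the structure of $N$ in the odd-prime case. For the equivalence, recall from Section~\ref{s3} that the only candidate for a proper non-trivial characteristic subgroup of a $p$-group with $\Phi\neq 1$ is $\Phi(G)$ itself; hence $G$ is UCS precisely when $G$ has no characteristic subgroup strictly between $1$ and $\Phi(G)$ and none strictly between $\Phi(G)$ and $G$. A subgroup of $G$ contained in $\Phi(G)$ (resp.\ containing $\Phi(G)$) is characteristic if and only if it is invariant under $\Aut(G)$, which translates — via the fact that $\Phi(G)$ and $\overline G$ are $\F_p$-vector spaces and the $\Aut(G)$-actions are linear — into invariance under $\Aut(G)^{\Phi(G)}$ (resp.\ $\Aut(G)^{\overline G}$). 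So (a) $\Leftrightarrow$ (b). For (b) $\Leftrightarrow$ (c), I would invoke Lemma~\ref{autgrp}, which identifies $\Aut(G)^{\overline G}$ with $\GL(\overline H)_N$ and $\Aut(G)^{\Phi(G)}$ with $(\GL(\overline H)_N)^{\Phi(H)/N}$, so the two irreducibility conditions match up verbatim.

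For the second part, assume $p$ odd and $G$ UCS. By Lemma~\ref{strmodules}, $\Phi(H)=H'\oplus H^p$ as $\GL(\overline H)$-modules, and both summands are $\GL(\overline H)_N$-invariant; moreover $\Phi(H)/H'\cong\overline H\cong H^p$. Consider the composite $\GL(\overline H)_N$-module map $H^p\hookrightarrow\Phi(H)\to\Phi(H)/(N+H')\cong H^p/((N\cap H^p)+\text{(image overlap)})$; more cleanly, since $\Phi(H)/N = \Phi(G)$ and $\Phi(H)=H'\oplus H^p$, the submodule $(H'+N)/N$ of $\Phi(G)$ is a $\GL(\overline H)_N$-submodule, so by irreducibility of $(\GL(\overline H)_N)^{\Phi(H)/N}$ (condition (c)) it is either $0$ or all of $\Phi(G)$. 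Case $(H'+N)/N=\Phi(G)$: then $\Phi(H)=H'+N$, so $H^p\subseteq H'+N$, and since $H^p\cap H'=0$ this forces $H^p$ to map into $N+H'$ with $H^p\cong\Phi(H)/H'=(H'+N)/H'\cong N/(N\cap H')$... I would instead argue directly that $H^p\leq N$: from $\Phi(H)=H'+N$ and $\Phi(H)=H'\oplus H^p$ one gets, by a dimension/modular-law computation, that the projection of $N$ onto $H^p$ is onto, hence (being a $\GL(\overline H)_N$-submodule of the image) equals $H^p$ by irreducibility of $\GL(\overline H)_N$ on $\overline H\cong H^p$ unless $N\cap H^p = 0$; pinning down these two alternatives gives cases (ii) and (iii). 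Case $(H'+N)/N=0$: then $H'\leq N$, and since $G$ is non-abelian would be contradicted, we examine $\overline H\cong\Phi(H)/H'\cong\Phi(G)$-image again — in fact $H'\leq N$ with $G$ UCS and non-abelian is impossible because then $G$ would have $G'=1$; if $G$ is abelian this is case (i) with $N=H'$ (using that $N\leq\Phi(H)$ contains $H'$ and $G$ abelian forces $N=H'$ exactly, as any larger $N$ kills some $p$-th power and then $\overline G$ is not free).

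To get the precise trichotomy I would organise it as: either $H^p\leq N$, or $H^p\cap N=1$, or neither — and rule out the last. If $0\neq H^p\cap N\subsetneq H^p$, then $H^p\cap N$ is a proper non-trivial $\GL(\overline H)_N$-submodule of $H^p$, which is impossible since $\GL(\overline H)_N$ is irreducible on $\overline H$ and $H^p\cong\overline H$. So exactly one of $H^p\leq N$, $H^p\cap N=1$ holds. When $H^p\leq N$: then $G$ has exponent $p$ (every $p$-th power dies) and is non-abelian provided $N\neq H'$, but $N\supseteq H^p$ and $N\supsetneq H'$ would make $(H'+N)/N$ a proper submodule situation — actually $N\supseteq H^p$ already; if also $H'\leq N$ then $N=\Phi(H)$ and $G$ is elementary abelian, not UCS; so $H'\not\leq N$, $G$ is non-abelian of exponent $p$, giving (ii). When $H^p\cap N=1$: the projection $\pi:\Phi(H)\to H^p$ restricts to an injection on $N$... rather, $H^p$ embeds into $\Phi(H)/N=\Phi(G)$, and by irreducibility of $\Aut(G)^{\Phi(G)}$ its image is all of $\Phi(G)$, so $\dim\Phi(G)=\dim H^p=r$, whence $|\Phi(G)|=p^r$ and $|G|=p^{2r}$; moreover $\Phi(G)\cong H^p\cong\overline H$ as $\GL(\overline H)_N$-modules, and also $\Phi(G)=\Phi(H)/N\cong H'/(N\cap H')\oplus H^p/(N\cap H^p)$ — but this must be irreducible of the same dimension as $H^p$, forcing the $H'$-part $(H'+N)/N\cong H'/(N\cap H')$ to be $0$ OR to be all of $\Phi(G)$ with the $H^p$ part zero (excluded since $H^p\cap N=1$); so $(H'+N)/N=0$ cannot be right either since then $\Phi(G)\cong H^p$ as claimed, but we also need $G$ non-abelian, i.e.\ $G'\neq 1$, i.e.\ $H'\not\leq N$, i.e.\ $N\cap H'\neq H'$, and combined with the module structure $H'/(N\cap H')\cong\overline H$; $G$ has exponent $p^2$ since $H^p\not\leq N$ means some $p$-th power survives. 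That is case (iii).

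The main obstacle I anticipate is the bookkeeping in case (iii): showing cleanly that $N$ meets $H^p$ trivially forces $N$ to project \emph{isomorphically} onto $H'$ as a $\GL(\overline H)_N$-module (so that $\overline H$ and $H'/(N\cap H')$ become equivalent modules) and simultaneously that $|G|=p^{2r}$, keeping careful track of which quotients are being identified. The mutual exclusivity of (i)–(iii) then follows since (i) has $H^p\leq N$ fail only when... more precisely, in (i) $N=H'$ so $H^p\cap N=H^p\cap H'=1$ but $H^p\not\leq N$, so (i) is the abelian sub-case of the $H^p\cap N=1$ alternative, and one distinguishes (i) from (iii) by whether $G$ is abelian, equivalently whether $H'\leq N$; these are genuinely disjoint.
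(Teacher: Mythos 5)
Your skeleton is the paper's: Lemma~\ref{autgrp} for (b)$\Leftrightarrow$(c), the correspondence between invariant subspaces and characteristic subgroups for (a)$\Leftrightarrow$(b), and, for odd $p$, the trichotomy via the $\GL(\overline H)_N$-invariant subgroups $H'N$, $H^p\cap N$, $H^pN$. Two steps, however, are genuinely defective as written. First, in (b)$\Rightarrow$(a) you rest on the claim that ``the only candidate for a proper non-trivial characteristic subgroup of a $p$-group with $\Phi(G)\neq1$ is $\Phi(G)$ itself''. That statement is false in general (it is a \emph{conclusion} about UCS groups, so invoking it here is circular), and consequently the direction you need of your biconditional --- that irreducibility on $\overline G$ and on $\Phi(G)$ excludes \emph{every} proper non-trivial characteristic subgroup, including one not comparable with $\Phi(G)$ --- is left unproved. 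The missing step is exactly the paper's: for characteristic $L$, the subgroup $L\Phi(G)$ is characteristic and contains $\Phi(G)$, so by irreducibility on $\overline G$ it equals $\Phi(G)$ or $G$; in the latter case $L=G$ because $\Phi(G)$ consists of non-generators, and in the former $L\leq\Phi(G)$, whence $L\in\{1,\Phi(G)\}$ by irreducibility on $\Phi(G)$.

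Second, in case (iii) you write $\Phi(H)/N\cong H'/(N\cap H')\oplus H^p/(N\cap H^p)$. This is false precisely in the situation at hand, where $N$ is a ``diagonal'' submodule with $N\not\leq H'$: there $|\Phi(H)/N|=p^r$ while the right-hand side has order $p^{2r}$. The dichotomy you build on it (the $H'$-part is $0$, or it is everything and the $H^p$-part is then $0$) is therefore wrong --- in fact $(H'N)/N$ and $(H^pN)/N$ are \emph{both} equal to $\Phi(G)$ --- and the key conclusion that $\overline H$ and $H'/(N\cap H')$ are equivalent modules ends up being asserted rather than derived. The correct route, which you gesture at but do not carry out, is: since $G$ is non-abelian, $H'N/N=G'$ is a non-zero $\GL(\overline H)_N$-submodule of $\Phi(H)/N$, hence $H'N=\Phi(H)$; combining this with $H^pN=\Phi(H)$ (which you did establish) and $N\cap H^p=1$ gives
\[
H^p\cong \frac{H^pN}{N}=\frac{\Phi(H)}{N}=\frac{H'N}{N}\cong\frac{H'}{N\cap H'},
\]
and since $H^p\cong\overline H$ by Lemma~\ref{strmodules} this yields the module equivalence, $|N|=|H'|$, and hence $|G|=p^{2r}$. (Your abelian case is also only sketched --- ``$\overline G$ is not free'' --- but the intended argument, that $G^p$ and $\{g\in G\mid g^p=1\}$ are both characteristic so an abelian UCS group is homocyclic of exponent $p^2$, forcing $N=H'$, is the paper's and is fine once spelled out.)
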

\begin{proof}
Assertions~(b) and~(c) are equivalent by Lemma~\ref{autgrp}. We now prove
that~(a) and~(b) are equivalent. The following two observations
show that~(a) implies~(b). First, inverse images of
$\Aut(G)^{\overline G}$-submodules of $\overline G$ correspond bijectively to
characteristic subgroups of $G$ containing $\Phi(G)$. Second,
$\Aut(G)^{\Phi(G)}$-submodules of $\Phi(G)$ correspond bijectively to
characteristic subgroups of $G$ contained in $\Phi(G)$. Assume now
that~(b) holds and that $L$ is a characteristic subgroup of $G$. As $L\Phi(G)$
is characteristic, it follows from the observation above that $L\Phi(G)$ equals $\Phi(G)$ or $G$.
In the latter case, $L=G$ as $\Phi(G)$ comprises the set of elements of $G$
that can be omitted from generating sets. In the former case $L\Phi(G)=\Phi(G)$
and $L\leq\Phi(G)$. It follows from~(b) that $L$ equals $1$ or $\Phi(G)$.
Thus~(b) implies~(a).

We now prove the second statement of the theorem. Suppose that $G$ is a
UCS $p$-group, where $p$ is odd. Suppose additionally that $G$ is abelian. Then
$H'\leq N$. Now $H/H'\cong (C_{p^2})^r$ is homocyclic of rank~$r$
and exponent~$p^2$. As in a UCS $p$-group  the subgroups $G^p$ and $\{g\in
G\mid g^p=1\}$ coincide, we obtain that $N=H'$ and~(i) holds.
Suppose now that $G$ is non-abelian.
By Lemma~\ref{strmodules}, $H'N$ is invariant 
under $\GL(\overline H)_N$. By Lemma~\ref{autgrp}, 
$\Aut(G)^{\Phi(G)}=(\GL(\overline H)_N)^{\Phi(H)/N}$, which shows that 
$H'N/N$ is  characteristic in $G$.
Thus $H'N$ equals $\Phi(H)$ or $N$. As $G$ is non-abelian $H'N=\Phi(H)$ holds.
As $p$ is odd, Lemma~\ref{strmodules} implies that $H^p$ is 
invariant under $\GL(\overline H)$, and so $H^p\cap N$ must be 
invariant under $\GL(\overline H)_N$. On the other hand, the first part of 
Theorem~\ref{3th} shows 
that $\GL(\overline H)_N$ is irreducible on $\overline H$, 
and so, by Lemma~\ref{strmodules}, also on $H^p$. 
Thus either $N\cap H^p=H^p$ 
or $N\cap H^p=1$. In the
former case $H^p\leq N$, and case~(ii) holds. In the
latter case, we shall show below that case~(iii) holds.

Suppose now that $H'N=\Phi(H)$, $N\cap H^p=1$ and $p$ is odd. 
As $H^p$ is invariant under $\GL(\overline H)$ (see Lemma~\ref{strmodules}), 
$NH^p/N$ must be invariant under $\GL(\overline H)_N$. Thus Lemma~\ref{autgrp}
implies that $NH^p/N$ is characteristic in $G$.
Hence $NH^p=\Phi(H)$. 
As $N\cap H^p=1$, we obtain the following isomorphisms between $\GL(\overline H)_N$-modules:
$$
H^p\cong\frac{H^p}{N\cap H^p}\cong
\frac{H^pN}{N}=\frac{\Phi(H)}{N}=\frac{H'N}{N}\cong\frac{H'}{N\cap H'}.
$$
By Lemma~\ref{strmodules}, $H^p\cong \overline H$, as 
$\GL(\overline H)$-modules, so $\overline H$ and
$H'/(N\cap H')$ are equivalent $\GL(\overline H)_N$-modules.
It follows from the above displayed equation that 
$|N|=|H'|=p^{r(r-1)/2}$, and hence that $|G|=p^{2r}$. Thus case~(iii) holds.
\end{proof}

\section{The existence of UCS $p$-groups}\label{ucsexist}

In this section we study the question whether UCS $p$-groups exist with
given parameters. We find that the existence of UCS $p$-groups is equivalent
to the existence of certain irreducible linear groups.

Let $H=H_{p,r}$ for some prime $p$ and integer $r$ as above. 
Note that the $\GL(\overline{H})$-action on $H'$ is equivalent to the 
exterior square of the natural action. If $p$ is an odd prime 
and $G$ is an $r$-generator  
UCS $p$-group with exponent $p$, then $G$ is non-abelian and has the form 
$H/N$ where $H^p\leq N< \Phi(H)$. 
Theorem~\ref{3th} implies that $\GL(\overline H)_N$ must be irreducible on 
$\overline H$ and also on $\Phi(H)/N$. As $H'\not\leq N$, we obtain that
$$
\frac{\Phi(H)}{N}=\frac{H'N}{N}\cong
\frac{H'}{H'\cap N}.
$$
Denote $\overline G$ by $V$. The argument 
above shows that an exponent-$p$ UCS $p$-group $G$
gives rise to an irreducible linear group $K:=\Aut(G)^{\overline G}$
acting on $V$, and a maximal $K$-module $M$ of $\Lambda^2 V$.

The structure of UCS $p$-groups with exponent $p^2$ is, by
Theorem~\ref{3th}(iii), intimately related to the following (apparently new)
concept in representation theory.

\noindent {\bf Definition.} 
An $\F G$-module $V$ is called an {\em exterior self-quotient} module,
briefly an {\em ESQ}-module, if there is an $\F G$-submodule $U$ of
$\VwedgeV$ such that $(\VwedgeV)/U$ is isomorphic to~$V$. If
$G$ acts faithfully on $V$, we call
$G$ an {\em ESQ-subgroup} of $\GL(V)$, or simply an {\em ESQ-group}.

By Theorem~\ref{3th}, $\Aut(G)^{\overline G}$ is
an irreducible ESQ-group when $G$ is a non-abelian 
UCS $p$-group of exponent $p^2$ for odd $p$. In Section~\ref{s7} we
study exterior self-quotient modules, where it is natural to
consider fields other than $\F_p$. When we consider {\em necessary} conditions
for the existence of UCS $p$-groups then we usually work over 
a finite prime field $\F_p$; however,
for {\em sufficient} conditions, working over 
arbitrary (finite) fields is most natural.

Suppose that $V$ is a $d$-dimensional 
vector space over a field $\F_q$ where $q=p^k$ for some prime $p$ and 
integer $k$. We may consider $V$ as a vector space over the prime field $\F_p$
and also over the larger field $\F_{q}$. Thus we may take the exterior
squares $\Lambda^2_{\F_p}V$ and $\Lambda^2_{\F_{q}}V$ over $\F_p$ and $\F_{q}$,
respectively. There is an $\F_p$-linear map 
$\varepsilon:\Lambda^2_{\F_p} V
\rightarrow \Lambda^2_{\F_q} V$ satisfying 
$\varepsilon(u\w v)= u\w v$ for all
$u,v\in V$. 

\begin{theorem}\label{UCSconstr}
Let $p$ be an odd prime, let $r$ and $s$ be integers.

\textup{(a)} 
The following two assertions are equivalent.
\begin{enumerate}
\item[\textup{(a1)}] There exists a UCS $p$-group $G$ with exponent $p$ such that 
$|\overline G|=p^{r}$ and $|\Phi(G)|=p^s$. 
\item[\textup{(a2)}] There exists an 
irreducible linear group $K$ acting on a vector space $V$ over $\F_{p^k}$, 
for some $k$, such that $\dim V=r/k$ and 
$\Lambda^2_{\F_{p^k}} V$ has a maximal $\F_{p^k}K$-submodule with codimension
$s/k$.
\end{enumerate}

\textup{(b)} The following two assertions are equivalent.
\begin{enumerate}
\item[\textup{(b1)}] There exists a UCS $p$-group $G$ with exponent $p^2$ such that 
$|\overline G|=p^{r}$.
\item[\textup{(b2)}] 
There exists an irreducible ESQ-module $V$ over a field $\F_{p^k}$ such
that $\dim V=r/k$, and $V$ can not be written over any proper subfields of
$\F_{p^k}$.
\end{enumerate}
\end{theorem}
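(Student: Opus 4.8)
The plan is to leverage Theorem~\ref{3th}, which already converts the assertion ``$G$ is a UCS $p$-group'' into the irreducibility of $\GL(\overline H)_N$ on $\overline H$ and on $\Phi(H)/N$; what is still needed is a converse to that conversion (building a group from a module) together with some bookkeeping to move between the prime field $\F_p$ and the larger field $\F_{p^k}$ occurring in~(a2) and~(b2). Write $H=H_{p,r}$, and recall from Lemma~\ref{strmodules} that for odd $p$ one has $\Phi(H)=H^p\oplus H'$ with $H^p\cong\overline H$ and $H'\cong\Lambda^2_{\F_p}\overline H$ as $\GL(\overline H)$-modules.

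The two ``only if'' implications I would dispatch by taking $k=1$. If $G$ is as in~(a1), write $G=H/N$ with $H^p\le N<\Phi(H)$ (Lemma~\ref{idgh}, Theorem~\ref{3th}), and put $V=\overline H$, $K=\Aut(G)^{\overline G}=\GL(\overline H)_N$ and $N_0=N\cap H'$; then $\Phi(G)\cong H'/N_0$ has order $p^s$, and Theorem~\ref{3th} makes $K$ irreducible on $V$ (of dimension $r$) and on $\Lambda^2_{\F_p}V/N_0$, so $N_0$ is a maximal $\F_pK$-submodule of $\Lambda^2_{\F_p}V$ of codimension $s$; this is~(a2) with $k=1$. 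Similarly, if $G$ is as in~(b1), Theorem~\ref{3th}(iii) says exactly that $\overline G\cong H'/(N\cap H')$ is a quotient of $H'\cong\Lambda^2_{\F_p}\overline G$, so $V:=\overline G$, as a module over $\F_p$ for the group $\Aut(G)^{\overline G}$, is an irreducible ESQ-module of dimension~$r$; since $\F_p$ has no proper subfield this is~(b2) with $k=1$.

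The content lies in the ``if'' directions. For~(a2)$\Rightarrow$(a1): regard $V$ as an $\F_p$-space $V_0$ of dimension $r$, identify $V_0=\overline H$, and set $G=H/N$ with $N=H^p\oplus N_0$, where $N_0:=\varepsilon^{-1}(M)\le\Lambda^2_{\F_p}V_0\cong H'$. Since $\varepsilon$ is a $K$-equivariant surjection, $\Lambda^2_{\F_p}V_0/N_0\cong\Lambda^2_{\F_{p^k}}V/M$, of $\F_p$-dimension $s$; as $M$ is proper, $N_0<H'$, so $|\overline G|=p^r$, $|\Phi(G)|=p^s$, and $G$ is non-abelian of exponent~$p$. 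Because $H^p$ and $H'$ are $\GL(\overline H)$-submodules of $\Phi(H)$, the group $\GL(\overline H)_N$ equals $\Stab_{\GL(\overline H)}(N_0)$ for the exterior-square action on $H'$, and this stabiliser contains the image of $K$ (as $M$ is $K$-invariant) and also the scalar subgroup $\F_{p^k}^\times\le\GL(\overline H)$ attached to the $\F_{p^k}$-structure on $V_0$: indeed $\varepsilon(\lambda u\w\lambda v)=\lambda^2\,\varepsilon(u\w v)$, so a scalar $\lambda$ acts on $H'$ so as to correspond under $\varepsilon$ to multiplication by $\lambda^2$ on $\Lambda^2_{\F_{p^k}}V$, which preserves the $\F_{p^k}$-subspace $M$. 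Now $\langle K,\F_{p^k}^\times\rangle$ acts $\F_p$-irreducibly on $V_0$ --- an $\F_p$-subspace invariant under $\F_{p^k}^\times$ is an $\F_{p^k}$-subspace, and $K$-invariance then forces it to be $0$ or $V_0$ --- so $\GL(\overline H)_N$ is irreducible on $\overline H$; it is irreducible on $\Phi(H)/N\cong\Lambda^2_{\F_{p^k}}V/M$ since it contains $K$ and $M$ is maximal. By Theorem~\ref{3th}(c), $G$ is a UCS $p$-group, giving~(a1). For~(b2)$\Rightarrow$(b1) the pattern is the same, but now $V_0:=V$ viewed over $\F_p$ is already $\F_p$-irreducible --- here is where the ``no proper subfield'' hypothesis is used, its Galois conjugates over $\overline{\F_p}$ being transitively permuted --- and one identifies $V_0=\overline H$. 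Using an ESQ-witness $\Lambda^2_{\F_{p^k}}V/M\cong V$ together with the isomorphism $\overline H\cong H^p$ of Lemma~\ref{strmodules}, form the $K$-equivariant surjection
$$\psi\colon H'=\Lambda^2_{\F_p}V_0\xrightarrow{\varepsilon}\Lambda^2_{\F_{p^k}}V\twoheadrightarrow\Lambda^2_{\F_{p^k}}V/M\xrightarrow{\sim}V\cong H^p,$$
and let $N$ be the graph of $-\psi$ in $\Phi(H)=H^p\oplus H'$. Then $N\cap H^p=1$, $N\cap H'=\ker\psi$, $NH^p=\Phi(H)$, $|G|=p^{2r}$, and the $\GL(\overline H)_N$-isomorphism $\overline H\cong H^p\cong H'/(N\cap H')$ holds by construction; since $\GL(\overline H)_N\supseteq K$ is irreducible on $\overline H$ and $\Phi(H)/N\cong H^p$ is irreducible, Theorem~\ref{3th}(iii) applies and $G$ is a UCS group of exponent $p^2$ with $|\overline G|=p^r$.

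The step I expect to be delicate is verifying that $\GL(\overline H)_N$ --- not merely the given group $K$ --- is irreducible on $\overline H$ in the two ``if'' directions. In part~(a) the identity $\varepsilon(\lambda u\w\lambda v)=\lambda^2\varepsilon(u\w v)$ is exactly what supplies the extra scalars $\F_{p^k}^\times\le\GL(\overline H)_N$, and it is only the enlarged group $\langle K,\F_{p^k}^\times\rangle$ --- rather than $K$, which is merely $\F_{p^k}$-irreducible --- that is forced to act $\F_p$-irreducibly. In part~(b) the $p$-th power map blocks this device, since a diagonal scalar $\lambda$ acts as $\lambda$ on $H^p$ but as $\lambda^2$ on $\Lambda^2_{\F_{p^k}}V/M$ and hence does not commute with $\psi$; so one instead leans on the ``no proper subfield'' hypothesis to make $V|_{\F_p}$ itself $\F_p$-irreducible, which is why that hypothesis appears in~(b2) but is unnecessary in~(a2). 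The remaining ingredients --- the identifications from Lemma~\ref{autgrp} and the module rule of \cite[Lemma~2.6]{obrien}, the decomposition $\Phi(H)=H^p\oplus H'$ for odd $p$, and the standard fact that an irreducible $\F_{p^k}$-module not realisable over a proper subfield restricts irreducibly to the prime field --- are routine.
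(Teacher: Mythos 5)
Your construction is essentially the paper's: the same use of the map $\varepsilon$ together with the scalar group $\{\lambda I\mid\lambda\in\F_{p^k}^\times\}$ in part~(a), the same graph-type subgroup $N$ built from the composite of $\varepsilon$, the ESQ-quotient and the isomorphism $\overline H\cong H^p$ in part~(b) (the paper first reduces to $k=1$ and then takes $N=\{x^py\mid \varphi(x\Phi(H))=yM\}$, which is your graph), with the no-proper-subfield hypothesis entering through irreducibility of the restriction to the prime field and Theorem~\ref{3th} closing both arguments. The only point needing one more line is your claim in~(a) that the stabiliser is irreducible on $\Phi(H)/N\cong\Lambda^2_{\F_{p^k}}V/M$ ``since it contains $K$ and $M$ is maximal'': maximality only gives $\F_{p^k}$-irreducibility of $K$ there, so, exactly as you argued for $\overline H$, you must again invoke the scalars --- they act on the quotient as $\lambda^2$, and since $p$ is odd these squares generate $\F_{p^k}$ as an $\F_p$-algebra, so any $\F_p$-subspace invariant under the stabiliser is an $\F_{p^k}K$-submodule and hence $0$ or everything; this is the step the paper spells out explicitly.
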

\begin{proof}
(a) 
Assume (a1) is true, and $G$ is an exponent-$p$ UCS-group. Then by
Theorem~\ref{3th} 
$\Aut(G)^{\overline G}$ is an irreducible linear group, so (a2) is true
with $k=1$. Assume now that (a2) holds and set $q=p^k$. Let 
$U$ be a maximal $\F_{q}K$-submodule of $\Lambda^2_{\F_q} V$ of codimension $s/k$.
Let $\varepsilon$ denote the epimorphism
$\Lambda^2_{\F_p} V\rightarrow \Lambda^2_{\F_q} V$. Let $Z$ denote the group
of the non-zero scalar transformations
$\{\lambda I\mid \lambda\in\F_{q}^\times\}$
of $V$. Then $Z$ commutes with $K$ and 
so one can form the subgroup $ZK$. 
Since $K$ is irreducible on $V$ over $\F_q$ and the action of 
$\F_{q}^\times$ on $V$ is realized by $Z$, we find that $ZK$ is 
irreducible on $V$ over $\F_p$. We claim that
$ZK$ is irreducible on $\Lambda^2_{\F_q} V/U$ over $\F_p$. 
Note that the element $\lambda I\in Z$ induces the
scalar transformation $\lambda^2 I$ on $\Lambda^2_{\F_q}
V/U$. Since these transformations generate the $\F_p$-algebra $\{\lambda I\ |\
\lambda\in\F_q\}$ of all scalar
transformations of $\Lambda^2_{\F_q} V/U$, we obtain that an $\F_pZK$-submodule
of $\Lambda^2_{\F_q} V/U$ is also an $\F_qZK$-submodule. Since $K$ is
irreducible on $\Lambda^2_{\F_q} V/U$ over $\F_q$, we obtain that $ZK$ is 
irreducible on $\Lambda^2_{\F_q} V/U$ over $\F_p$.

Obviously, $\varepsilon$ is a $ZK$-homomorphism.
If $\hat{U}=\varepsilon^{-1}(U)$, then clearly 
$\Lambda^2_{\F_q} V/U\cong
\Lambda^2_{\F_p} V/{\hat U}$.
Hence $\hat U$ is a maximal $ZK$-submodule in $\Lambda^2_{\F_p} V$. 
By this argument, we may, and shall, henceforth assume
that (a2) is true with $k=1$ and will write $\Lambda^2$ for $\Lambda^2_{\F_p}$.

Let $H$ denote $H_{p,r}$. As the $\GL(\overline H)$-action on 
$H'$ is equivalent to its action on $\Lambda^2 \overline H$, we
identify $V$ with $\overline H$, $\Lambda^2 V$ with $H'$, and $\hat U$
with a $K$-invariant normal subgroup $N$ of index $p^s$ in $H'$.
Set $G=H/(H^pN)$.
As $p\geq 3$, Lemma~\ref{strmodules} shows that $H^p$ and
$H'$ are $\GL(\overline H)$-submodules
of $\Phi(H)$ such that $\Phi(H)=H'\oplus H^p$. 
Hence $K$ must stabilize $H^pN$ and so Lemma~\ref{autgrp} gives that
$K\leq \Aut(G)^{\overline G}$. 
Since $K$ is irreducible, so is $\Aut(G)^{\overline G}$. 
As
$$
\frac{\Phi(H)}{H^pN}=\frac{H^pH'}{H^pN}
\cong\frac{H'}{N},
$$
we obtain that
$K$ and $\Aut(G)^{\overline G}$ are irreducible on
$\Phi(H)/(H^pN)$.
Now Theorem~\ref{3th} implies that $G$ is a UCS $p$-group with exponent $p$.

(b) The discussion at the beginning of this section shows that
(b1) implies (b2) with $k=1$. Before proving the converse, we argue that
we may assume that $k=1$. Suppose that $V$ is an ESQ $\F_q K$-module, and
$U$ is an $\F_q K$-submodule satisfying $\Lambda^2 V/U\cong V$. Using the
notation in part~(a), the $\F_p(ZK)$-homomorphism
$\varepsilon\colon \Lambda^2_{\F_p} V\to\Lambda^2_{\F_q} V$ gives rise to
an $\F_p(ZK)$-isomorphism
$\Lambda^2_{\F_p} V/{\hat U}\cong \Lambda^2_{\F_q} V/U$ where
$\hat{U}:=\varepsilon^{-1}(U)$. Since $\Lambda^2 V/U\cong V$ is an
$\F_pK$-isomorphism, it follows that $\Lambda^2_{\F_p} V$ is ESQ. Moreover,
$V$ is an irreducible $\F_pK$-module by \cite[Theorem~VII.1.16(e)]{HB}.
In summary, viewing $V$ as a $K$-module over $\F_p$ of larger dimension
allows us to assume that the hypotheses for (b2)
hold for $k=1$.

Suppose that $k=1$. Set $H=H_{p,r}$. Take $K$ to be an irreducible 
subgroup of $\GL(\overline H)$, and $M$ to be a $K$-submodule of $H'$ such
that $\overline H$ and $H'/M$ are isomorphic.
Specifically let $\varphi\colon{\overline H}\to H'/M$ be a $K$-module isomorphism. Set
$$
N=\left\{x^py\ |\ x\in H,\ y\in H'\mbox{ such that }
 \varphi(x\Phi(H))=yM\right\},
$$
and set $G=H/N$. As $p\geq 3$, one can easily check that $N$ is a subgroup of 
$\Phi(H)$ and that $H'\cap N=M$.  
Lemma~\ref{strmodules} shows that the  map 
$x\Phi(H)\mapsto x^p$ is an isomorphism between the 
$\GL(\overline H)$-modules $\overline H$ and $H^p$. Therefore,
if $g\in K$ and $x^py\in N$ with some $x\in H$ and $y\in H'$ 
then, as $\varphi$ is a $K$-homomorphism, $(x^py)^g=(x^g)^py^g\in N$.
Thus $N$ is a $K$-submodule. Therefore Lemma~\ref{autgrp} shows
that $K\leq\Aut(G)^{\overline G}$. By assumption $K$ is irreducible on 
$\overline H\cong\overline G$. The definition of $N$ gives that 
$H'N=\Phi(H)$, and so
$$
\frac{\Phi(H)}N=\frac{H'N}{N}\cong
\frac{H'}{H'\cap N}=\frac{H'}{M}\cong \overline G.
$$
As $\Phi(G)\cong\Phi(H)/N$, we obtain that $K$ acts  
irreducibly on
$\Phi(G)$. Since  $K\leq \GL(\overline H)_N$, this shows that 
$(\GL(\overline H)_N)^{\Phi(H)/N}$ is irreducible. Hence, 
by Theorem~\ref{3th}, $G$ must be a UCS $p$-group. Since
$N\cap H^p=1$, $G$ has exponent~$p^2$. Thus (b2) implies (b1).
\end{proof}

\section{UCS $p$-groups with generator number at most 3}\label{s3.5}

In this section we classify 2- and 3-generator UCS $p$-groups.
The main result of this section is the following theorem from which
Theorem~\ref{main}(a)-(b) follows.

\begin{theorem}\label{r23}
Let $G$ be an $r$-generated non-abelian UCS $p$-group.
\begin{itemize}
\item[(a)] 
If $p=r=2$, then $G$ is isomorphic to the quaternion group $Q_8$,
and if $p=2$ and $r=3$, then $G\cong G_1$ where
$$
G_1=\left<x_1,\ x_2,\ x_3\ |\
x_1^2[x_1,x_2][x_1,x_3][x_2,x_3],\ x_2^2[x_1,x_2][x_1,x_3],\ x_3^2[x_1,x_2],
\ \mbox{$2$-class $2$}
\right>
$$
has order $2^6$. Further, $\Aut(Q_8)^{\overline{Q_8}}\cong\GL_2(2)$ and
$\Aut(G_1)^{\overline{G_1}}$ has order $21$. 

\item[(b)] 
If $p\geq 3$ and $r=2$, then $G\cong G_2$ where
$
G_2=\langle x_1,\ x_2\ |\  x_1^p,\ x_2^p,\ \mbox{$p$-class $2$}\rangle
$
is extraspecial of order $p^3$ and exponent $p$.
Further, $\Aut(G_2)^{\overline{G_2}}\cong\GL_2(p)$. 
\item[(c)] If $p\geq 3$ and $r=3$, then $G$ has order $p^6$ and $G\cong G_3$ or $G_4$ where
\begin{align*}
  G_3&=\langle x_1,\ x_2,\ x_3\ |\  x_1^p,\ x_2^p,\ x_3^p,\ \mbox{$p$-class $2$}\rangle, \quad\mbox{and}\\
  G_4&=\langle x_1,\ x_2,\ x_3\ |\
x_1^p=[x_2,x_3],\ x_2^p=[x_3,x_1],\ x_3^p=[x_1,x_2],\ \mbox{$p$-class $2$}\rangle.
\end{align*}
Further, $\Aut(G_3)^{\overline{G_3}}\cong\GL_3(p)$ and $\Aut(G_4)^{\overline{G_4}}\cong\SO_3(p)$. 
\end{itemize}
\end{theorem}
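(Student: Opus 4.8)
The plan is to write $G=H_{p,r}/N$ with $N\le\Phi(H_{p,r})$ as in Lemma~\ref{idgh}, and then to apply Theorem~\ref{3th}: putting $H=H_{p,r}$ and $K=\GL(\overline H)_N=\GL_r(p)_N$, the group $G$ is UCS precisely when $K$ is irreducible on $\overline H$ and $K^{\Phi(H)/N}$ is irreducible on $\Phi(H)/N$. Two minimal generating sets of $G$ differ by an automorphism, so isomorphism classes of such $G$ correspond to the $\GL_r(p)$-orbits of admissible $N$; the theorem thus reduces to enumerating these orbits for $r=2,3$ and matching each with a group on the list. Throughout I use the module structure from Lemma~\ref{strmodules}: for odd $p$, $\Phi(H)=H^p\oplus H'$ with $H^p\cong\overline H$ and $H'\cong\Lambda^2\overline H$ (so $\dim H'$ is $1$ if $r=2$ and $3$ if $r=3$); for $p=2$, $\Phi(H)=H^2$ has submodule $H'=\Lambda^2\overline H$ with $\Phi(H)/H'\cong\overline H$.

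\emph{Odd $p$ (parts (b), (c)).} By Theorem~\ref{3th}(i)--(iii) a non-abelian UCS group falls in case~(ii) ($H^p\le N$, exponent $p$) or case~(iii) ($N\cap H^p=1$, exponent $p^2$). In case~(ii), $N=H^p\oplus(N\cap H')$, and irreducibility of $H'\cong\Lambda^2\overline H$ forces $N\cap H'\in\{0,H'\}$; since $N\ne\Phi(H)$ we get $N=H^p$, so $K=\GL_r(p)$ and $G\cong H_{p,r}/H^p$, which is $G_2$ when $r=2$ and $G_3$ when $r=3$, with $\Aut(G)^{\overline G}=\GL_r(p)$. Case~(iii) cannot occur when $r=2$, as the $2$-dimensional $\overline H$ is not a quotient of the $1$-dimensional $H'$. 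When $r=3$, dimensions force $N\cap H'=1$, so $N$ is the graph of a $K$-module isomorphism $\psi\colon H^p\to H'$; in the Hodge basis of $\Lambda^2\overline H$ this $\psi$ becomes an invertible matrix $P$ with $K=\{g\in\GL_3(p):g^{\mathrm t}Pg=\det(g)P\}$. Splitting $P=S+A$ into symmetric and alternating parts gives $K=K_S\cap K_A$, and since a nonzero alternating form in dimension~$3$ is degenerate, $K$ stabilises a line unless $A=0$. So $P=S$ is symmetric, and non-degenerate because $\psi$ is an isomorphism; then $\det g=1$ is forced and $K=\SO(S)\cong\SO_3(p)$, which is irreducible on the $3$-dimensional modules $\overline H\cong\Phi(H)/N$. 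Since $\GL_3(p)$ acts on such $P$ by $P\mapsto\det(g)\,(g^{-1})^{\mathrm t}Pg^{-1}$ and the scalar matrices realise $P\mapsto\lambda P$ (swapping the two congruence classes of non-degenerate symmetric forms), all admissible $N$ form one orbit; taking $P=I$ recovers the presentation of $G_4$, with $\Aut(G_4)^{\overline{G_4}}=\SO_3(p)$. This proves (b), (c) and Lemma~\ref{solemma}.

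\emph{Part (a), $r=2$, $p=2$.} Here $\GL_2(2)\cong S_3$, $\Phi(H)\cong\F_2^3$ has the trivial submodule $H'=\langle[x_1,x_2]\rangle$, and $\Phi(H)/H'\cong\overline H$ is the $2$-dimensional irreducible. Going through the dimension of $N$: if $\dim N\le1$ then either $N=H'$ and $G\cong C_4\times C_4$ is abelian, or $\Phi(H)/N$ properly contains the (trivial) image of $H'$ and $G$ is not UCS; if $\dim N=3$ then $G$ is elementary abelian. If $\dim N=2$, then $\Phi(H)/N$ is automatically irreducible and one needs $\GL_2(2)_N$ to contain an element of order~$3$; of the seven hyperplanes of $\Phi(H)$, the three containing $H'$ form one $S_3$-orbit with stabilisers of order~$2$, three of the other four form an $S_3$-orbit with stabilisers of order~$2$, and the last one, $\{\alpha x_1^2+\beta x_2^2+\gamma[x_1,x_2]:\alpha+\beta+\gamma=0\}$, is $\GL_2(2)$-invariant. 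For this unique $N$, $G=H/N$ satisfies $x_1^2=x_2^2=[x_1,x_2]$, so $G\cong Q_8$ and $\Aut(Q_8)^{\overline{Q_8}}=\GL_2(2)_N=\GL_2(2)$.

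\emph{Part (a), $r=3$, $p=2$.} Now $\overline H\cong\F_2^3$ and, up to conjugacy, the irreducible subgroups of $\GL_3(2)$ are a Singer cycle $C_7$, its normaliser $F_{21}=7{:}3$, and $\GL_3(2)$ itself, so $K=\GL_3(2)_N$ is conjugate to one of these. Restricting the $6$-dimensional $\Phi(H)$ to a Singer cycle and using Maschke, $\Phi(H)|_{C_7}=H'|_{C_7}\oplus W$; since $\overline H$ and $H'=\Lambda^2\overline H$ restrict to the two \emph{distinct} $3$-dimensional irreducible $\F_2C_7$-modules (the Galois orbits $\{1,2,4\}$ and $\{3,5,6\}$ of characters of $C_7$), the complement $W$ is unique, and as $F_{21}$ normalises $C_7$ and fixes $H'$ and $\Phi(H)$ it fixes $W$ too. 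So every $3$-dimensional $C_7$-invariant subspace equals $H'$ or $W$; since $N=H'$ yields the abelian $C_4^3$, we must have $N=W$, and then $\GL_3(2)_W\supseteq F_{21}$. It remains to exclude $K=\GL_3(2)$, i.e.\ that $W$ is a $\GL_3(2)$-submodule; a direct computation with an explicit Singer cycle shows $\Phi(H_{2,3})$ is indecomposable, so $\GL_3(2)_W=F_{21}$ (which is maximal in $\GL_3(2)$), of order~$21$. As all Singer cycles are conjugate, $N=W$ is unique up to the $\GL_3(2)$-action, $G=H/W$ is non-abelian (commutator subgroup $\cong H'\ne1$) and UCS ($\Phi(G)\cong H'$, on which $C_7$ is irreducible), and computing $W$ explicitly identifies it with $G_1$ and $\Aut(G_1)^{\overline{G_1}}$ with $F_{21}$. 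I expect the real obstacle to be exactly this last point: proving the $\GL_3(2)$-module $\Phi(H_{2,3})$ is indecomposable — equivalently, that there is no $\GL_3(2)$-equivariant quadratic map $\overline H\to\Lambda^2\overline H$ whose polarisation is the universal alternating form — together with the explicit matching of $W$ to the presentation of $G_1$; the odd-$p$ cases, by contrast, reduce cleanly to module theory once Theorem~\ref{3th} is applied.
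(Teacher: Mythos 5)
Your proposal is correct and takes essentially the same route as the paper: Lemma~\ref{idgh} and Theorem~\ref{3th} reduce the problem to classifying the admissible $\GL_r(p)$-orbits of subgroups $N\leq\Phi(H_{p,r})$, your analysis of the exponent-$p^2$ case for $r=3$ (write $N$ as the graph of a matrix $P$, kill the alternating part by irreducibility, force $\det g=1$, get $\SO_3(p)$ and a single orbit by rescaling) is exactly the content and proof of Lemma~\ref{solemma}, and your $p=2$, $r=3$ argument is the paper's Singer-cycle decomposition of $\Phi(H_{2,3})$. The two computations you defer — identifying $H_{2,3}/W$ with the stated presentation of $G_1$ and showing $\Aut(G_1)^{\overline{G_1}}$ has order $21$ (equivalently, that the $\GL_3(2)$-module $\Phi(H_{2,3})$ is indecomposable) — are precisely the ones the paper also leaves to a direct or machine calculation; the only point worth making explicit in your case~(ii) step is that $K$ is irreducible on $H'\cong\Lambda^2\overline H\cong\overline H^{\ast}\otimes\det$ because it is irreducible on $\overline H$, since $N\cap H'$ is only $K$-invariant rather than $\GL_3(p)$-invariant.
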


\begin{proof}
(a) If $p = 2$ and $r = 2$, then
$|G'| = 2$ and so $|G| = 8$. The dihedral group of order~$8$ has a
characteristic cyclic subgroup of order $4$, so the only possibility is
that $G\cong Q_8$. The group $Q_8$ can be written as $H_{2,2}/N$ where 
$N=\left<x^2[y,x],y^2[y,x]\right>$ and $x,\ y$ are the generators of
$H_{2,2}$. Now easy calculation shows that $N$ is invariant under
$\GL(\overline{H_{2,2}})$,
and so Lemma~\ref{autgrp} implies that $\Aut{(Q_8)}^{\overline{Q_8}}\cong\GL_2(2)$.
Let now $p = 2$, $r = 3$. It can be
checked that every irreducible subgroup of $\GL_3(2)$ has order divisible
by~7. All subgroups of order~7 are conjugate in $\GL_3(2)$, so we may take
an arbitrary one. Its action on $\Phi(H_{2,3})$ is the sum of two non-isomorphic
irreducible 3-dimensional submodules, say $(H_{2,3})'$ and $N$. Hence
$H_{2,3}/N$ is a non-abelian UCS-group. A direct calculation (or an
application of a computational algebra system \cite{GAP,Magma})
shows that $H_{2,3}/N\cong G_1$, and $\Aut(G_1)^{\overline{G_1}}$ is
a non-abelian subgroup of $\GL_3(2)$ of order~21.

(b) 
Suppose that $p$ is odd, and $H=H_{p,2}$. Let $N$ be a subgroup of
$\Phi(H)$ such that $G=H/N$ is a non-abelian UCS $p$-group. As $G'=\Phi(G)$
has order $p$, it follows that $N=H^p$. Moreover, $H/H^p$ is an
exponent-$p$ UCS extraspecial group isomorphic to $G_2$.
By Lemma~\ref{strmodules}, $H^p$ 
is invariant under $\GL(\overline{H})$, and so
$\Aut(G)^{\overline G}\cong\GL_2(p)$.

This completes the proof of parts~(a) and~(b). The proof of part~(c) relies
on the following lemma.

\begin{lemma}\label{solemma}
Suppose that $V$ is a $3$-dimensional vector space over a finite field $\F$,
where $\textup{char}(\F)\ne2$.
Let $U$ be a subspace of $V\oplus \Lambda^2 V$ such that $\dim U=3$, 
$U\cap V=U\cap \Lambda^2 V=0$ and that $\GL(V)_U$ is irreducible on $V$. Then
there exists a $g\in\GL(V)$ such that $Ug=W$ where
$$
W=\left<e_1-e_2\w e_3,e_2-e_3\w e_1,e_3-e_1\w e_2\right>. 
$$
and $e_1,e_2,e_3$ is a basis for $V$.
Further, $\GL(V)_U=g\GL(V)_W g^{-1}$ and $\GL(V)_W=\SO_3(\F)$. 
\end{lemma}

\begin{proof}
Let $e_1,e_2,e_3$ be a basis for $V$, and let
$e_2\wedge e_3,e_3\wedge e_1,e_1\wedge e_2$ be the corresponding
dual basis for $\VwedgeV$. Concatenating these bases gives a basis
for $V\oplus \VwedgeV$. We view $g\in\GL(V)$ as a $3\times3$ matrix
relative to the basis $e_1,e_2,e_3$. An easy computation shows that 
the transformation $g\wedge g\in\GL(\VwedgeV)$ defined by
$(u\wedge v)(g\wedge g)=(ug)\wedge(vg)$, has matrix
$\det(g)(g^{-1})^\T=\det(g)g^{-\T}$ relative to the above dual basis. Thus
$g$ acting on $V\oplus \VwedgeV$ has matrix
\begin{equation}\label{Mat}
  \begin{pmatrix}g&0\\0&g\wedge g\end{pmatrix}
  =\begin{pmatrix}g&0\\0&\det(g)g^{-\T}\end{pmatrix}.
\end{equation}

The subspace $U$ has a basis of the form $e_1-a_1,e_2-a_2,e_3-a_3$ where
$a_1,a_2,a_3$ is a basis for $\VwedgeV$. We now calculate the
stabilizer $\GL(V)_U$. 
Let $U_A$ denote the $3\times 6$ matrix 
$\begin{pmatrix}I\mid -A\end{pmatrix}$
where $A$ is the
invertible $3\times3$ matrix with $i$th row
\[
  (a_{i1},a_{i2},a_{i3})\quad\text{where}\quad
    a_i=a_{i1}e_2\wedge e_3+a_{i2}e_3\wedge e_1+a_{i3}e_1\wedge e_2.
\]
Note that the matrix $\begin{pmatrix}I\mid -A\end{pmatrix}$ possesses
two $3\times 3$ sub-blocks.
We shall view $U$ as the row space of $U_A$.
Let $g$ be an arbitrary invertible $3\times3$ matrix. Then
\[
\begin{pmatrix}I\mid -A\end{pmatrix}
\begin{pmatrix}g&0\\0&g\wedge g\end{pmatrix}
  =
\begin{pmatrix}g\mid -A(g\wedge g)\end{pmatrix}
\]
However, the matrices $\begin{pmatrix}g\mid -A(g\wedge g)\end{pmatrix}$
and
$\begin{pmatrix}I\mid -g^{-1}A(g\wedge g)\end{pmatrix}$
have the same row space.
The image of $U_A$ under the $\GL(V)$-action is thus
$(U_A)g=U_{g^{-1}A(g\wedge g)}=U_{\det(g)g^{-1}Ag^{-\T}}$,
by equation~(\ref{Mat}). Hence $g\in\GL(V)$ stabilizes $U=U_A$
if and only if $A=g^{-1}A(g\wedge g)$, or $A$ intertwines $g$ and $g\wedge g$.
In summary, $g\in\GL(V)_U$ if and only if $gAg^\T=(\det g)A$.

The stabilizer $\GL(V)_U$ is contained in the subgroup
\[
  \Gamma=\{g\in\GL(V)\mid g (A-A^\T) g^\T=(\det g)(A-A^\T)\}.
\]
However $\Gamma$, and hence $\GL(V)_U$, fixes the null space
\[
  \{v\in V\mid v(A-A^\T)=0\}.
\]
As $\GL(V)_U$ acts irreducibly on $V$, $A-A^\T$ is either $0$ or
invertible. Since
\[
  \det(A-A^\T)=\det((A-A^\text{T})^\T)=\det(A^\T-A)=(-1)^3\det(A-A^\T)
\]
and $\text{char}(\F)\ne2$, we see that $\det(A-A^\T)=0$.
Thus $A-A^\T=0$, and $A$ is
symmetric. Given that $A$ is invertible, $gAg^\T=(\det g)A$ for
$g\in\GL(V)_U$ implies $\det(g)=1$. In summary, $\GL(V)_U$ is the
special orthogonal group
\[
  \GL(V)_U=\{g\in\GL(V)\mid gAg^\T=A\text{\ \ and\ \ $\det(g)=1$}\}.
\]
In particular, if $A=I$ and $W=U_I$, then 
$$
\GL(V)_W
=\{g\in\GL(V)\mid gg^\T=I\text{ and $\det(g)=1$}\}=\textup{SO}_3(\F).
$$

The quadratic form $Q_A\colon V\to\F\colon v\mapsto\frac12 vAv^\T$
determines (and is determined by) a non-degenerate symmetric
bilinear form $\beta_A\colon V\times V\to\F\colon(v,w)\mapsto vAw^\T$.
By diagonalizing $\beta_A$ (see \cite[Ch.~1,\,Cor.~1.2.4]{Lam}), there exists
$g_1\in\GL(V)$ such that $g_1^{-1}A(g_1^{-1})^\T$ is a non-zero scalar matrix.
Thus $(U_A)g_1=U_{g_1^{-1}A g_1^{-\T}\det(g_1)}=U_{\lambda I}$
where $\lambda I$ is a non-zero scalar matrix.
However, $(U_{\lambda I})(\lambda^{-1} I)=U_I$. Thus $U_Ag=U_I=W$
where $g=g_1\lambda^{-1}$.
\end{proof}

\noindent{\it The proof of Theorem~$\ref{r23}$(c).} 
Let $p$ be an odd prime, set $H=H_{p,3}$ and let 
$G=H/N$ be a 3-generator UCS $p$-group. By Lemma~\ref{strmodules}, 
the $\GL(\overline H)$-modules
$\overline H$ and $H^p$ are equivalent via the $p$-th power map.
The action of $\GL(\overline H)$ on $H'$ is equivalent to the exterior 
square action. In the proof of Lemma~\ref{solemma}, the action of $\GL(V)$ on
$\Lambda^2 V$ was shown to be $g\mapsto\det(g)(g^{-1})^\T$. Thus by
Lemma~\ref{strmodules} and Theorem~\ref{3th} the stabilizer
$K:=\GL(\overline H)_N$ acts irreducibly on $\overline H\cong H^p\cong V$
and on $H'\cong\Lambda^2 V$.

If $G$ has exponent $p$, then $H^p\leq N$.
By Lemma~\ref{strmodules}, $H'$ is invariant under $\GL(\overline H)$, 
and so 
the subspace $N\cap H'$ must be invariant under $\GL(\overline H)_N$. 
If
$1<N\cap H'<H'$, then $\GL(\overline H)_N$ 
is reducible on $H'$ contradicting the previous paragraph. Thus, 
by Theorem~\ref{3th}, $N\cap H'=1$, and 
$G=H/H^p$. Hence $G$ must 
be isomorphic to the group $G_3$ in the statement of the theorem,
and $\Aut(G_3)^{\overline{G_3}}\cong\GL_3(p)$.

Suppose now that $G$ has exponent $p^2$. By Theorem~\ref{3th}(iii), $|N|=p^3$, 
$N\cap H^p=1$ and $\GL(\overline H)_N$ is irreducible. As the 
$\GL(\overline H)$-actions on $H'$ and $\Lambda^2 \overline H$ are
equivalent,
Lemma~\ref{solemma} with $\F=\F_p$ shows that a minimal generating set $x_1,\ x_2,\ x_3$ of
$G$ can be chosen satisfying the relations of $G_4$. Lemma~\ref{solemma}
also yields that $\Aut(G_4)^{\overline{G_4}}\cong\SO_3(p)$. As $\SO_3(p)$ 
acts irreducibly on both $G/G^p$ and $G^p$, we see that
$G=G_4$ is a UCS-group, as required.
\end{proof}


\section{$4$-generator UCS 2-groups}\label{s4.5}


In this brief section we describe a computer-based 
classification of $4$-generator UCS $2$-groups. Recall that
$\{x_1,x_2,x_3,x_4\}$ is a fixed minimal generating set for $H_{2,4}$. 
Let $y_1,y_2,y_3,y_4$
denote the squares $x_1^2,x_2^2,x_3^2,x_4^2$,
and let $z_1,z_2,z_3,z_4,z_5,z_6$ denote the commutators
$[x_1,x_2],[x_1,x_3],[x_1,x_4],[x_2,x_3],[x_2,x_4],[x_3,x_4]$ in $H_{2,4}$, 
respectively. Each group below has the form
$H_{2,4}/N$ where $N$ is a subgroup of the Frattini subgroup
\[
  \Phi(H_{2,4})=
  \langle y_1,y_2,y_3,y_4,z_1,z_2,z_3,z_4,z_5,z_6\rangle.
\]
The following theorem proves the part of Theorem~\ref{main}(c) with $p=2$.
The homocyclic abelian group $H_{2,4}/N_5$ below is not included in
Theorem~\ref{main}(c).

\begin{theorem}\label{4th}
A $4$-generator UCS $2$-group is isomorphic to the group $H_{2,4}/N$
where $N$ is precisely one of the 9 subgroups described below:
\begin{align*}
N_1&=\langle y_1, y_2, y_3, y_4, z_1z_3, z_2, z_3z_4, z_5, z_6\rangle;
\hskip7.5mm N_2=\langle y_1, y_2y_3, y_3z_4, y_4, z_1z_3, z_2, z_3z_4, z_5, z_6\rangle;\\
N_3&=\langle y_1z_1, y_2z_1, y_3, y_4, z_1z_2z_3, z_2z_3z_5, z_3z_4, z_6\rangle;\\
N_4&=\langle y_1z_1, y_2z_2, y_3z_2, y_4z_1, z_1z_5, z_2z_3z_5, z_3z_4, z_6\rangle;\\
N_5&=\langle z_1, z_2, z_3, z_4, z_5, z_6\rangle;\hskip31.5mm
N_6=\langle y_1z_3, y_2, y_3, y_4, z_1z_5, z_6\rangle;\\
N_7&=\langle y_1z_2, y_2z_5, y_3, y_4, z_1z_6, z_2z_5z_6\rangle;\hskip13mm
N_8=\langle y_1z_2z_4, y_2y_4z_3, y_3y_4z_4, y_4z_1, z_1z_6, z_2z_5z_6\rangle;\\
N_9&=\langle y_1z_3, y_2z_4, y_3z_4, y_4z_3, z_1z_6, z_2z_5z_6\rangle.
\end{align*}
\end{theorem}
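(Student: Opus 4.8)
The plan is to translate the problem, via Theorem~\ref{3th}, into the enumeration of a finite list of subspace configurations, and then to carry out that enumeration using the orthogonal geometry on $\Lambda^2 V$ (the Klein correspondence) together with a computer algebra system. Set $H=H_{2,4}$, $V=\overline H\cong\F_2^4$, and recall that $\Phi(H)$ is an extension of $V$ by the submodule $H'\cong\Lambda^2 V$ (Lemma~\ref{strmodules}), which is non-split because the squaring map on $H$ is not a homomorphism. By Lemma~\ref{idgh} every $4$-generator UCS $2$-group has the form $H/N$ with $N\leq\Phi(H)$, and, since $\Aut(H)$ induces all of $\GL(\overline H)=\GL_4(2)$ on $\overline H$ while the kernel of this action fixes $\Phi(H)$ pointwise (Lemma~\ref{autgrp}), any isomorphism $H/N\to H/N'$ lifts to an automorphism of $H$ carrying $N$ to $N'$; hence $H/N\cong H/N'$ exactly when $N$ and $N'$ lie in a common $\GL_4(2)$-orbit on subgroups of $\Phi(H)$. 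By Theorem~\ref{3th}, $H/N$ is UCS precisely when $K:=\GL_4(2)_N$ is irreducible on $V$ and on $\Phi(H)/N$. So the theorem asserts that there are exactly nine such $\GL_4(2)$-orbits, with representatives $N_1,\dots,N_9$.

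The first step is to split off the abelian case. Since $H'N/N$ is a $K$-submodule of the irreducible module $\Phi(H)/N$, either $H'\leq N$ or $H'N=\Phi(H)$; in the former case $N/H'$ is a $K$-submodule of $\Phi(H)/H'\cong V$, so irreducibility of $K$ on $V$ forces $N=H'$, yielding the homocyclic group $H/N_5\cong(C_4)^4$. Henceforth $H'N=\Phi(H)$, so $G=H/N$ is non-abelian, $\Phi(H)/N\cong H'/M$ with $M:=H'\cap N$, $\dim N=4+\dim M$ and $|G|=2^{4+\dim(H'/M)}$; moreover $N$ is recovered from the pair consisting of $M$ and a choice of ``diagonal'' lift of $V$ into $\Phi(H)/M$, that is, of the cosets $x_i^2 M$.

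The heart of the proof is then a finite computation. Using the Klein correspondence, $\Lambda^2 V$ is a $6$-dimensional orthogonal $\F_2$-space of $+$-type, $\GL_4(2)=\SL_4(2)$ acts on it as $\Omega_6^+(2)$, and its singular points correspond to the lines of $V$. I would (i)~list the conjugacy classes of irreducible subgroups $K\leq\GL_4(2)$, either through Aschbacher's classification scheme (as used elsewhere in the paper) or by a direct search with \cite{GAP,Magma}; (ii)~for each such $K$, find the $K$-invariant subspaces $M\leq\Lambda^2 V$ for which $\Lambda^2 V/M$ is $K$-irreducible, where the orthogonal structure organizes the possibilities, the $\GL_4(2)$-orbit of $M$ being largely pinned down by $\dim M$, the dimension of its radical, and whether $M\subseteq M^{\perp}$; (iii)~for each admissible pair $(K,M)$, determine the $K$-invariant subgroups $N\leq\Phi(H)$ with $H'\cap N=M$, $H'N=\Phi(H)$ and $\GL_4(2)_N=K$, the last condition preventing double counting; and (iv)~collect the resulting $N$ into $\GL_4(2)$-orbits. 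The computation shows that exactly eight orbits arise in the non-abelian case, represented by $N_1,\dots,N_4$ and $N_6,\dots,N_9$; with $N_5$ this gives the nine subgroups claimed, with $|H/N_i|$ equal to $2^5,2^5,2^6,2^6,2^8,2^8,2^8,2^8,2^8$ for $i=1,\dots,9$.

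Finally I would close the loop on the stated representatives: for each $N_i$, check directly (by hand when $\dim(\Phi(H)/N_i)$ is small, otherwise with \cite{GAP,Magma}) that $\GL_4(2)_{N_i}$ is irreducible on $\overline H$ and on $\Phi(H)/N_i$, so that $H/N_i$ is UCS; and check that $N_1,\dots,N_9$ are pairwise non-conjugate, for instance by separating them with the order $|H/N_i|$, the isomorphism type of the $\GL_4(2)_{N_i}$-module $\Phi(H)/N_i$, and the structure of $\GL_4(2)_{N_i}$ as a linear group. The step I expect to be the main obstacle is the completeness of (i)--(iii): one must be certain that no irreducible $K\leq\GL_4(2)$, and no maximal $K$-submodule $N$ of the $10$-dimensional module $\Phi(H)$ with $H'N=\Phi(H)$, has been overlooked, and --- precisely because $\Phi(H)$ is the genuine non-split extension rather than the split module $V\oplus\Lambda^2 V$ --- that the distinct subgroups $N$ sharing the same $M$ (as for the pair $N_1,N_2$ and among $N_6,\dots,N_9$) are correctly distinguished. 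This is why a machine computation, together with the known list of groups of order dividing $2^8$, is in practice indispensable here.
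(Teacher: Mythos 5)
Your proposal is correct in outline and reaches the same list, but it organises the decisive computation differently from the paper. The shared part is the reduction: $G\cong H_{2,4}/N$ with $N\leq\Phi(H_{2,4})$ (Lemma~\ref{idgh}), isomorphism of quotients corresponding to $\GL_4(2)$-orbits of subgroups of $\Phi(H_{2,4})$, the UCS criterion of Theorem~\ref{3th}, and the abelian case giving $N_5$. Where you diverge is the enumeration in the non-abelian case: you propose a uniform module-theoretic search, looping over the irreducible subgroups $K\leq\GL_4(2)$, the $K$-submodules $M\leq\Lambda^2V$ with $\Lambda^2V/M$ irreducible, and the $K$-invariant lifts $N\leq\Phi(H_{2,4})$ with $H'\cap N=M$ and $H'N=\Phi(H_{2,4})$. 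The paper instead exploits the classification of $2$-groups of order dividing $2^9$: it bounds $|G|\leq 2^{10}$ via $\Phi(G)=G'$ and $|H'|=2^6$, loops over the {\sc Magma}/{\sf GAP} library of $4$-generator candidates of order dividing $2^9$ checking irreducibility of $\Aut(G)^{\overline G}$ and $\Aut(G)^{\Phi(G)}$, verifies the nine listed quotients directly with {\tt CharacteristicSubgroups}, and only for the intractable order $2^{10}$ falls back on exactly the kind of search you propose (over irreducible $R\leq\GL_4(2)$, showing no invariant $4$-dimensional complement to $\Lambda^2V$ in $\Phi(H_{2,4})$ exists with $R$ irreducible on $\Lambda^2V$). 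So your route is essentially the paper's order-$2^{10}$ argument applied throughout: it buys independence from the small-groups library and avoids the $2^{10}$ bottleneck, at the price of the more delicate bookkeeping you yourself flag --- enumerating all invariant lifts $N$ over a given $M$ inside the $10$-dimensional module and fusing them into $\GL_4(2)$-orbits --- whereas the paper's library loop is trivially exhaustive for orders up to $2^9$. Both proofs are machine-dependent, and your claimed output (eight non-abelian orbits, orders $2^5,2^5,2^6,2^6,2^8,\dots,2^8$) agrees with the paper's. One small caution: your parenthetical claim that $\Phi(H_{2,4})$ is a non-split extension of $V$ by $\Lambda^2V$ ``because squaring is not a homomorphism'' is not a proof --- the same remark applies to $H_{2,2}$, where the extension \emph{does} split (the paper exhibits the invariant complement defining $Q_8$); for $r=4$ non-splitness is true, but it is really a consequence of (indeed equivalent to) the non-existence of a UCS group of order $2^{10}$, so it cannot be assumed at the outset. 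Since nothing in your argument rests on it, this does not affect the validity of the plan.
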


\begin{proof}
The proof relies primarily on computer calculations. One can
easily verify, using the command
{\tt CharacteristicSubgroups} in {\sf GAP}~\cite{GAP}, 
that each of the above 9 quotient groups are UCS
$2$-groups.  Clearly, they all are $4$-generator groups.

Suppose that $G$ is a $4$-generator UCS $2$-group. Then $G\cong H_{2,4}/N$
where $N\leq\Phi(H_{2,4})$ by Lemma~\ref{idgh}. If $G$ is abelian, then 
$G^2$ is the non-trivial and proper characteristic subgroup of $G$. In this
case, $G$ must be isomorphic to the homocyclic group $H_{2,4}/N_5\cong(C_4)^4$.

Assume now that $G$ is non-abelian. 
As $\Phi(G)=G'$ and
$|(H_{2,4})'|=2^6$, we deduce that $|G|\leq 2^{10}$. The classification of
2-groups with order at most $2^{10}$ is part of the computational algebra
systems {\sc Magma} and {\sf GAP}~\cite{Magma,GAP}.
Both systems can be used to determine whether a given 2-group is a
$4$-generator UCS group. Although the system {\sc Magma} was
faster then {\sf GAP}, it was still unable to deal with the large number
of groups of order
$2^{10}$. Suppose that $H$ is a $4$-generator group in the {\sc Magma}
library with order dividing $2^9$ where $H'=\Phi(H)=Z(H)$. We used {\sc Magma}
to compute $\Aut(H)$, and checked whether $\Aut(H)^{\overline H}$ and 
$\Aut(H)^{\Phi(H)}$ are irreducible linear groups. By Theorem~\ref{3th}, 
$H$ is UCS if and only if both of these linear groups are irreducible.
In this way one can verify that if $|G|$ divides $2^9$, then
$G$ is precisely one of the 9 quotient groups above. Looping over the 
groups with order $2^9$ on a computer with a 1.8 GHz CPU and
512 MB memory took approximately 20 CPU minutes. A similar
computation for $|G|=2^{10}$ never completed.

It remains to show that no 4-generator UCS group $G=H_{2,4}/N$ has order
$2^{10}$ where $N\leq\Phi(H_{2,4})$ has order $|N|=2^4$.
By Theorem~\ref{3th}, $S:=\GL_4(2)_N$ is an irreducible subgroup of
$\GL_4(2)$. View $N$ as a subspace of 
$\Phi(H_{2,4})\cong(\F_2)^{10}$, and identify $\Phi(H_{2,4})/(H_{2,4})'$ with
$V=(\F_2)^4$, and $(H_{2,4})'$ with $\Lambda^2 V$.
As $|G'|=|\Lambda^2 V|=2^6$, we see that $N\cap\Lambda^2 V=0$,
and so $\Phi(H_{2,4})$ admits the $S$-module direct
decomposition $\Phi(H_{2,4})=N\oplus\Lambda^2 V$. Further, since 
$\Phi(H_{2,4})/N\cong \Lambda^2 V$, the action of $S$ on $\Lambda^2 V$ is
irreducible by Theorem~\ref{3th}. We use {\sc Magma} to loop over
all irreducible subgroups
$R$ of $\GL(V)\cong\GL_4(2)$. We find that either $R$ is reducible on
$\Lambda^2 V$, or the $R$-action on $\Phi(H_{2,4})$ fixes no
$4$-dimensional subspace that could correspond to $N$. This contradiction
proves that no such group $G$ exists.
\end{proof}

The {\sf GAP} and {\sc Magma} catalogue 
numbers of the 9 quotient groups in Theorem~\ref{4th} are
$[ 32, 49 ]$, $[ 32, 50 ]$,
$[ 64, 242 ]$, $[ 64, 245 ]$,
$[ 256, 6732 ]$,
$[ 256, 8935 ]$, $[ 256, 10090 ]$, $[ 256, 10289 ]$, and
$[ 256, 10297 ]$, respectively. 
The 1-dimensional semilinear group $\Gamma \textup{L}_1(16)$
has a presentation $\langle a,b\mid a^4=b^{15}=1,b^a=b^2\rangle$.
The automorphism groups of the 9 USC 2-groups
induce on the Frattini quotient the following irreducible
subgroups of $\GL_4(2)$:
$\textup{O}^{+}_4(2)$, $\textup{O}^{-}_4(2)$,
$\GL_2(2)\boxtimes \GL_2(2)\cong S_3\times S_3$, 
$\Gamma \textup{L}_1(16)$,
$\GL_4(2)$, $\textup{O}^{+}_4(2)$, $\langle a,b^3\rangle$,
$\langle b^3\rangle$, and
$\Gamma \textup{L}_2(4)$ respectively.
This can be deduced by using {\sf GAP} or {\sc Magma} to compare
the chief series of irreducible subgroups of $\GL_4(2)$, and 
the chief series of automorphism groups of USC 2-groups.

The groups in Theorem~\ref{4th} indicate that an important class
of UCS $2$-groups are formed by the Suzuki 2-groups. 
Computation with GAP~\cite{GAP} shows that the group 
$H_{2,4}/N_4$  is isomorphic to 
the Suzuki 2-group $\mathcal Q$ described as a $(3\times 3)$-matrix 
group in~\cite[VIII.7.10~Remark]{HB} with $n=4$. Further, the group
$H_{2,4}/N_9$ is isomorphic to the group $A(4,\vartheta)$ 
in~\cite[6.7~Example]{HB} where
$\vartheta$ is the Frobenius automorphism of $\F_{16}$. However, 
since $\vartheta$ has order $4$, by~\cite[6.9~Theorem]{HB},
this group is not a Suzuki 2-group. 
In a
Suzuki 2-group $G$, the subgroup $\Phi(G)$ coincides
with the elements of order at most~2~\cite[VII.7.9~Theorem]{HB}, 
and so the automorphism $\xi$ that permutes the 
involutions transitively acts irreducibly on $\Phi(G)$. 
Moreover, it is noted in 
the proof of~\cite[7.9~Theorem(b)]{HB} that in a Suzuki 2-group $G$ 
of type
$A(\vartheta,n)$ the automorphism $\xi$ that permutes the set of involutions
transitively acts irreducibly on $\overline G$, and so Theorem~\ref{3th} 
implies that these groups are always UCS groups. We claim, in addition,
that the group
$\mathcal Q$ in~\cite[VIII.7.10~Remark]{HB} is always UCS. Indeed,
the group of diagonal matrices with $z^{1-q},z,z^2$ in the diagonal
where $z$ runs through the elements of $\F_{q^2}$ induces a Singer
cycle on the quotient $\mathcal Q/\overline{\mathcal Q}$, and hence this
group is irreducible on $\mathcal Q/\overline{\mathcal Q}$. 
Thus Theorem~\ref{3th} gives that $\mathcal Q$ is UCS. Though 
we conjecture that Suzuki 2-groups are always UCS groups, the detailed 
investigation of these groups goes beyond the scope of this paper.




\section{$4$-generator UCS $p$-groups with exponent $p$}\label{s5}

Suppose that $p$ is odd. 
In this section we classify $4$-generator UCS $p$-groups with
exponent $p$. That is, we prove the following theorem.

\begin{theorem}\label{4gen}
Let $p$ be an odd prime, let $\{x_1,x_2,x_3,x_4\}$ be a minimal 
generating set for $H_{p,4}$. Fix $\alpha\in\F_p^\times$
such that $\F_p^\times=\langle-\alpha\rangle$. 
The following is a complete and irredundant 
list of the isomorphism classes
of $4$-generator UCS $p$-groups with exponent $p$:
\begin{enumerate}
\item[(i)] $G_0=H_{p,4}/(H_{p,4})^p$;
\item[(ii)] $G_2=H_{p,4}/\left<(H_{p,4})^p,
[x_1,x_2][x_3,x_4]^{-1},[x_1,x_3],[x_1,x_4],[x_2,x_3],[x_2,x_4]\right>$;
\item[(iii)] $G_4=H_{p,4}/\left<(H_{p,4})^p,[x_1,x_3],[x_1,x_4],[x_2,x_3],[x_2,x_4]\right>$;
\item[(iv)] $G_6=H_{p,4}/\left<(H_{p,4})^p,[x_1,x_2],[x_3,x_4],[x_2,x_3][x_1,x_4]^{-1},[x_1,x_3]^\alpha[x_2,x_4]\right>$;
\item[(v)] $G_{11}=H_{p,4}/\left<(H_{p,4})^p,[x_1,x_4],[x_2,x_3],[x_2,x_4][x_1,x_3]^{-1}\right>$;
\item[(vi)] $G_{14}=H_{p,4}/\left<(H_{p,4})^p,[x_1,x_2][x_3,x_4]\right>$;
\item[(vii)] $G_{16}=H_{p,4}/\left<(H_{p,4})^p,[x_1,x_2],[x_3,x_4]\right>$;
\item[(viii)] $G_{18}=H_{p,4}/\left<(H_{p,4})^p,[x_2,x_3][x_1,x_4],[x_1,x_3]^\alpha[x_2,x_4]^{-1}\right>$.
\end{enumerate}
\end{theorem}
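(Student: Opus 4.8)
The plan is to reduce the classification of 4-generator exponent-$p$ UCS groups to a problem about subspaces of the $\GL_4(p)$-module $W=\Lambda^2(\F_p)^4$, and then to exploit the Klein correspondence to turn that into a problem about quadratic forms on a 6-dimensional space. By Theorem~\ref{3th} and the discussion at the beginning of Section~\ref{ucsexist}, a 4-generator exponent-$p$ UCS group is exactly a quotient $H_{p,4}/\langle (H_{p,4})^p, N\rangle$ where $N$ is a subspace of $W=(H_{p,4})'$ such that the stabilizer $K=\GL_4(p)_N$ is irreducible on $\overline H\cong(\F_p)^4$ and on $W/N$. The identification of $W$ with $\Lambda^2V$, $V=(\F_p)^4$, together with the fact (recalled in Section~\ref{s5}'s preamble) that $\GL(V)$ preserves a non-degenerate quadratic form $Q$ on $W$ up to scalars, is the crucial structural input. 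So the first step is to set up the Klein correspondence explicitly: fix the basis $z_1,\dots,z_6$ of $W$ given by the commutators $[x_i,x_j]$, write down the quadratic form $Q(\sum a_{ij}e_i\wedge e_j)=a_{12}a_{34}-a_{13}a_{24}+a_{14}a_{23}$ (the Plücker relation), and record that $\GL_4(p)$ acts on $(W,Q)$ as (a subgroup of index $\le2$ of, and containing) the conformal orthogonal group, inducing $\Omega^+_6(p)\cong\PSL_4(p)$ on the projective quadric.

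The second step, which I expect to be the heart of the argument, is the claimed equivalence: \emph{$N$ yields a UCS group if and only if $Q|_N$ is non-degenerate}. For the "only if" direction I would argue that if $Q|_N$ has a nontrivial radical $R=N\cap N^\perp$, then $R$ is invariant under $K=\GL_4(p)_N$ (since $K$ preserves both $N$ and, up to scalars, $Q$, hence preserves $N^\perp$), giving a proper nonzero $K$-submodule of $W$ unless $R=N$; and the totally singular case $R=N$ is handled separately using that a maximal totally singular subspace of $(W,Q)$ has dimension $3$ and its stabilizer in $\GL_4(p)$ is a reducible (parabolic) subgroup, so $K$ cannot be irreducible on $V$. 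For the "if" direction, when $Q|_N$ is non-degenerate one has the orthogonal decomposition $W=N\perp N^\perp$, so $K$ preserves $N^\perp$ and acts on $W/N\cong N^\perp$; one then checks that the stabilizer in $\GL_4(p)$ of a non-degenerate subspace $N$ of each possible dimension (and Witt type) acts irreducibly on both $V$ and $N^\perp$ — this is a Witt-type/clifford-algebra bookkeeping that one does dimension by dimension ($\dim N\in\{1,\dots,6\}$, with the relevant plus/minus type distinctions), and it is exactly here that the primitive element $\alpha$ enters, distinguishing the two similarity classes of non-degenerate forms in the even-dimensional cases.

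The third step is enumeration and normalization. Having reduced to "non-degenerate subspaces $N\subseteq W$ up to the action of $\GL_4(p)$ (equivalently, up to the conformal orthogonal group $\textup{CO}^+_6(p)$)", I would run through the possible dimensions $s=\dim N$ from $0$ to $6$: for each $s$ count the $\textup{CGO}^+_6(p)$-orbits of non-degenerate $s$-subspaces (using that two non-degenerate subspaces are conformally equivalent iff they have the same dimension and their discriminants agree modulo the scaling available, which collapses the usual two orthogonal types to one orbit in odd dimension and leaves at most two in even dimension), and for each orbit write down a representative subspace in the $z_i$-basis. Matching these representatives against the eight subgroups $\langle(H_{p,4})^p,\dots\rangle$ listed in (i)--(viii) — for instance $G_0$ corresponds to $N=0$, $G_{14}$ to the nondegenerate $1$-space $\langle z_1z_6\rangle$ hitting $Q$ nontrivially, $G_{16}$ to $\langle z_1,z_6\rangle$ (a hyperbolic plane), $G_6$ and $G_{18}$ to the two similarity types of anisotropic/definite $2$-spaces distinguished by $\alpha$, and so on — then gives completeness. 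Irredundancy follows because distinct orbits give non-isomorphic groups: an isomorphism of the groups would, by Lemma~\ref{idgh}, induce an element of $\GL_4(p)$ carrying one $N$ to the other. The main obstacle is the second step's "if" direction — verifying irreducibility of the stabilizer of each non-degenerate subspace on both $V$ and $N^\perp$ — since it requires understanding how the stabilizer (essentially a direct product of two smaller orthogonal groups, twisted by the conformal scaling) sits inside $\GL_4(p)$ via spin/Klein, and one must be careful in the small cases ($s=3$, where $N^\perp$ is also $3$-dimensional, and the low-dimensional orthogonal groups are not quasisimple) and in tracking the $p\equiv\pm1$ versus $p\equiv\pm2\pmod{\text{something}}$ subtleties that the statement of Theorem~\ref{main} warns about; I would lean on the classification of $p$-groups of order dividing $p^7$ (\cite{p6,p7}) as a cross-check that no orbits have been missed.
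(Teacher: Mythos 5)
Your overall strategy is the same as the paper's (reduce via Theorem~\ref{3th} to subspaces $N\leq\Lambda^2V$, and prove that $G_N$ is UCS if and only if $Q|_N$ is non-degenerate, then enumerate orbits), but as written there are two genuine gaps. First, your ``only if'' direction is mis-cased. When $0\neq R\subsetneq N$ you claim a contradiction because $R$ is a proper non-zero $K$-submodule of $W=\Lambda^2V$; but the UCS criterion only requires $K=\GL(V)_N$ to be irreducible on $V$ and on $W/N$ (not on $W$ --- indeed $N$ itself is a $K$-submodule of $W$), so this is no contradiction. The submodule that actually does the work is $(N+N^\perp)/N\leq W/N$, which is non-zero and proper exactly when $N^\perp\not\leq N$ and $R\neq0$; the residual hard case is therefore $N^\perp\leq N$ (totally singular \emph{perp}), not your case $R=N$ (totally singular $N$). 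Concretely, the subspaces $U_1$ (dimension $5$, radical $\langle[x_3,x_4]\rangle$) and $U_3$ (dimension $4$, $2$-dimensional singular radical) have $N^\perp\subsetneq N$ with $N$ not totally singular: your first argument is vacuous for them and your ``maximal totally singular, parabolic stabilizer'' argument does not apply, so these groups are never excluded. The fix is the paper's geometric one: when $N^\perp\leq N$, the totally singular subspace $N^\perp$ has dimension $1$, $2$ or $3$ and, under the Klein correspondence, equals $\langle v_1\w v_2\rangle$, $\langle v_1\w v_3,v_2\w v_3\rangle$, $W_1\w V$ or $W_3\w W_3$, so $\GL(V)_N=\GL(V)_{N^\perp}$ fixes a proper subspace of $V$.

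Second, the ``if'' direction --- which is where essentially all the work of the theorem lies --- is only asserted (``Witt-type/Clifford-algebra bookkeeping''). One must actually identify, inside $\GL_4(p)$, the stabilizer of each non-degenerate $U_i$ and prove irreducibility on $V$ \emph{and} on $\Lambda^2V/U_i$: the paper shows these stabilizers are $\GSp_4(p)$, $\GL_2(p)\Wr C_2$, $\GammaL_2(p^2)$ and $\GL_2(p)\Y\GL_2(p)$, and the irreducibility proofs are not routine (e.g.\ a Singer-cycle eigenvalue analysis for $\GammaL_2(p^2)$ on the $4$- and $2$-dimensional summands, an outer-tensor-product argument for $\GL_2\boxtimes\GL_2$, and $\Omega_5(p)\leq\GSp_4(p)$ for the $5$-dimensional piece). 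Relatedly, your orbit count ``up to the conformal orthogonal group'' needs care: $\GL_4(p)$ maps onto a proper (index-two modulo scalars) subgroup of the conformal orthogonal group of $(W,Q)$, so conformal equivalence of non-degenerate subspaces does not automatically give $\GL_4(p)$-equivalence; the paper sidesteps this by quoting the known list of the $19$ $\GL_4(p)$-orbits on subspaces of $\Lambda^2V$ (Brahana and the $p^6$, $p^7$ classifications), which also supplies the completeness you propose to get only as a ``cross-check.'' (A small slip: $G_6$ corresponds to a non-degenerate $4$-space, the perp of the anisotropic $2$-space giving $G_{18}$, not to a $2$-space itself.)
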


The indexing of groups in Theorem~\ref{4gen} is explained later. It
is related to the 18 orbits of $\GL(V)$ on the proper non-trivial
subspaces of $\Lambda^2 V$.

We recall that $H_{p,r}$ is defined in Section~\ref{s3}.
Theorem~\ref{4gen} is the quantitative version of Theorem~\ref{main}(c)
and its proof relies on the classification of $4$-generator 
$p$-groups with exponent $p$ and nilpotency class~$2$. 
In order to present the classification here,
we need some notation.
Since $p$ is odd,
a $4$-generator $p$-group $G$ with nilpotency 
class $2$ and exponent $p$ is isomorphic to
$H_{p,4}/((H_{p,4})^pN)$ where 
$N$ is a subgroup of $(H_{p,4})'$. Since $(H_{p,4})'$ is an elementary abelian 
group, we view it as a vector space over $\F_p$. 
Assume that $H_{p,4}$ is generated by $x_1,\ x_2,\ x_3,\ x_4$. 
Consider the subgroup $(H_{p,4})'$ as a 6-dimensional subspace over $\F_p$ 
with standard basis $[x_1,x_2]$, $[x_1,x_3]$, $[x_1,x_4]$, $[x_2,x_3]$,
$[x_2,x_4]$, $[x_3,x_4]$. We introduce a non-degenerate 
quadratic form on $(H_{p,4})'$:
$$
Q'\left([x_1,x_2]^{\alpha_1}
[x_1,x_3]^{\alpha_2}[x_1,x_4]^{\alpha_3}[x_2,x_3]^{\alpha_4}
[x_2,x_4]^{\alpha_5}[x_3,x_4]^{\alpha_6}\right)
=\alpha_1\alpha_6-\alpha_2\alpha_5+\alpha_3\alpha_4.
$$
The quadratic form $Q'$ induces a non-degenerate
symmetric bilinear form:
$$
(v_1,v_2)=Q'(v_1+v_2)-Q'(v_1)-Q'(v_2).
$$
If $U$ is a subgroup in $(H_{p,4})'$, then $U^\perp$
is defined as 
$$
U^\perp=\{v\in (H_{p,4})'\ |\ (u,v)=0\mbox{ for all }u\in U\}.
$$
Let $\alpha$ be as in Theorem~\ref{4gen}, and
define the following 
subgroups in $(H_{p,4})'$:
\begin{itemize}
\item $N_0=1$;
\item $N_1=\left<[x_1,x_3],\,[x_1,x_4],\,[x_2,x_3],\,[x_2,x_4],\,[x_3,x_4]\right>$;
\item $N_2=\left<[x_1,x_2][x_3,x_4]^{-1},\,[x_1,x_3],\,[x_1,x_4],\,[x_2,x_3],\,[x_2,x_4]\right>$;
\item $N_3=\left<[x_1,x_2],\,[x_1,x_4],\,[x_2,x_4],\,[x_3,x_4]\right>$;
\item $N_4=\left<[x_1,x_3],\,[x_1,x_4],\,[x_2,x_3],\,[x_2,x_4]\right>$;
\item $N_5=\left<[x_1,x_2],\,[x_2,x_4],\,[x_3,x_4],\,[x_2,x_3][x_1,x_4]^{-1}\right>$;
\item $N_6=\left<[x_1,x_2],\,[x_3,x_4],\,[x_2,x_3][x_1,x_4]^{-1},\,[x_1,x_3]^\alpha[x_2,x_4]\right>$;
\item $N_7=\left<[x_1,x_4],\,[x_2,x_4],\,[x_3,x_4]\right>$;
\item $N_8=\left<[x_1,x_3],\,[x_1,x_4],\,[x_2,x_4]\right>$;
\item $N_9=\left<[x_2,x_3],\,[x_2,x_4],\,[x_3,x_4]\right>$;
\item $N_{10}=\left<[x_1,x_3],\,[x_1,x_4],\,[x_3,x_4][x_1,x_2]^{-1}\right>$;
\item $N_{11}=\left<[x_1,x_4],\,[x_2,x_3],\,[x_2,x_4][x_1,x_3]^{-1}\right>$;
\item $N_{12}=\left<[x_1,x_4],\,[x_2,x_4][x_1,x_3]^{-\alpha},\,[x_1,x_2][x_3,x_4]^{-1}\right>$.
\end{itemize}
For $i=13,\ldots,18$ set $N_i=(N_{i-12})^\perp$. That is, $N_i$ is
the subgroup perpendicular to $N_{i-12}$ with respect to the
symmetric bilinear form $(\;,\;)$ associated to $Q'$.
For $i=0,\ldots,18$, let $G_i$ denote the group $H_{p,4}/((H_{p,4})^pN_i)$.
Our notation is consistent in the sense that the groups $G_i$
defined here coincide with those defined in Theorem~\ref{4gen}.

\begin{lemma}\label{dual}
For $p\geq 3$, every $4$-generator finite $p$-group with
exponent $p$ and nilpotency class~$2$ is isomorphic to precisely one of the
groups $G_0,G_1,\ldots,G_{18}$.
\end{lemma}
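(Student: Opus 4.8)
The plan is to reduce the classification of $4$-generator exponent-$p$ class-$2$ groups to a problem in projective geometry over $\F_p$, namely the classification of subspaces $N\leq (H_{p,4})'\cong\Lambda^2 V$ up to the action of $\GL(V)=\GL_4(p)$, where $V=(\F_p)^4$. Indeed, by the discussion preceding the lemma, such a group is isomorphic to $H_{p,4}/((H_{p,4})^pN)$ for a unique subspace $N\leq (H_{p,4})'$, and two such quotients $H_{p,4}/((H_{p,4})^pN)$ and $H_{p,4}/((H_{p,4})^pN')$ are isomorphic if and only if $N$ and $N'$ lie in the same $\GL(V)$-orbit. This last equivalence follows from Lemma~\ref{idgh} together with Lemma~\ref{strmodules}: an isomorphism of the quotients lifts to an automorphism of $H_{p,4}$ carrying $(H_{p,4})^pN$ to $(H_{p,4})^pN'$, and since $(H_{p,4})^p$ and $(H_{p,4})'$ are complementary $\GL(\overline{H})$-submodules of $\Phi(H_{p,4})$ (with the $\GL(\overline H)$-action on $(H_{p,4})'$ being the exterior square), this is equivalent to a $\GL(V)$-element carrying $N$ to $N'$ inside $\Lambda^2 V$. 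So the task becomes: enumerate the $\GL_4(p)$-orbits on subspaces of $\Lambda^2 V$ and match them with the explicit list $N_0,\dots,N_{18}$.

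The key tool is the Klein correspondence, exactly as flagged in the introduction: $\GL(V)$ acting on $W:=\Lambda^2 V$ preserves, up to scalars, the nondegenerate quadratic form $Q'$ written out just before the lemma (whose associated quadric in $\mathbb{P}(W)$ is the Klein quadric, parametrizing the decomposable bivectors, i.e. the lines of $\mathbb{P}(V)$). First I would record the standard fact that the image of $\GL_4(p)$ in $\GL(W)$ is the conformal orthogonal group $\mathrm{GO}_6^+(p)$ of $Q'$ (more precisely its subgroup of similarities with square multiplier, which suffices here); this is where the $+$-type enters. Then the $\GL(V)$-orbits on subspaces $N\leq W$ are classified by two invariants: the dimension $\dim N$ and the isometry type of the restriction of $Q'$ to $N$ — that is, the Witt decomposition data of $(N,Q'|_N)$, refined by whether decomposable vectors (points of the Klein quadric) lie in $N$ and how $N$ meets its own perpendicular $N^\perp$. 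By Witt's theorem the conformal orthogonal group is transitive on subspaces with the same such data, so one carries out a finite case analysis: for each $d=\dim N$ from $0$ to $6$, list the possible radicals $N\cap N^\perp$ and the type ($\pm$, or nondegenerate odd-dimensional which splits into the two square-class cases — this is where the element $\alpha$ with $\F_p^\times=\langle-\alpha\rangle$ is needed) of $N$ modulo its radical. Counting these yields $19$ orbits.

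Finally I would check that the explicit subspaces $N_0,\dots,N_{18}$ are representatives of precisely these $19$ orbits, by computing for each $N_i$ its dimension and the Gram data of $Q'|_{N_i}$ in the given standard basis — this is routine linear algebra with the explicit form $Q'(\alpha_1,\dots,\alpha_6)=\alpha_1\alpha_6-\alpha_2\alpha_5+\alpha_3\alpha_4$ — and observing the built-in duality $N_{i+12}=N_i^\perp$ pairs up the orbits of complementary dimension, which halves the work. The main obstacle, and the place where care is really required, is the case analysis for $3$-dimensional $N$ (self-perpendicular behavior is possible only there, and $Q'|_N$ can be a nondegenerate form in two inequivalent square-classes, a nondegenerate form of $+$ or $-$ rank $2$ with a $1$-dimensional radical, or totally singular), together with verifying the irredundancy, i.e. that none of the listed $N_i$ accidentally collapse into the same orbit — one must confirm the invariants genuinely separate all $19$. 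Everything else is bookkeeping around Witt's extension theorem.
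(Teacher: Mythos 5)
Your first step -- reducing the lemma to the classification of $\GL_4(p)$-orbits on proper subspaces of $\Lambda^2V$ -- is correct and is exactly how the paper begins (the paper quotes O'Brien's result for the ``if and only if''; your derivation via Lemmas~\ref{idgh} and~\ref{strmodules} is fine). The gap is in the classification step. Witt's theorem gives transitivity of the isometry, or similarity, group of $Q'$ on subspaces with prescribed restricted-form data, but the group acting here is the image of $\GL_4(p)$ under $g\mapsto g\wedge g$, and this is a \emph{proper} subgroup (of index $4$) of the full similarity group of $Q'$: the multiplier of $g\wedge g$ is $\det g$ (so it is not the ``square-multiplier'' subgroup you describe), and, more importantly, the image consists of collineations of $\mathbb{P}(V)$ only, hence contains no element interchanging the two families of maximal totally singular subspaces of the Klein quadric. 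Consequently your invariants do not separate all orbits. Concretely, $N_7=\langle [x_1,x_4],[x_2,x_4],[x_3,x_4]\rangle$ and $N_9=\langle [x_2,x_3],[x_2,x_4],[x_3,x_4]\rangle$ are both $3$-dimensional, totally singular, equal to their own perpendiculars, and consist entirely of decomposable vectors (your ``decomposables present'' refinement is vacuous, since for bivectors in dimension $4$ decomposable and singular coincide); Witt's theorem does put them in one orbit of the orthogonal group, yet they lie in \emph{different} $\GL_4(p)$-orbits, being the two rulings $x_4\wedge V$ (lines through a point) and $\Lambda^2\langle x_2,x_3,x_4\rangle$ (lines in a plane), which no collineation can exchange. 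So ``dimension plus (similarity) type of $Q'|_N$'' yields only $18$ classes, not $19$, and the asserted count does not follow from the stated invariants. Your dimension-$3$ case list is also off in detail: you omit the rank-$1$ case with $2$-dimensional radical (represented by $N_{10}$), and the two square classes of nondegenerate ternary restrictions are in fact fused under $\GL_4(p)$ because the multiplier $\det g$ is surjective and scaling a ternary form changes its discriminant class.

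By contrast, the paper does not attempt this geometric classification at all: after the same reduction it uses the semi-invariance of $Q'$ only to pair each orbit with its perpendicular (so that just the orbits on subspaces of dimension $3,4,5$ need be listed), and then \emph{cites} the known orbit classification (Brahana, and the classification of groups of order dividing $p^7$). If you want a self-contained proof along your lines, you must compute orbits of the actual image of $\GL_4(p)$, not of the conformal orthogonal group; this requires extra arguments beyond Witt's theorem, e.g. the description of totally singular subspaces of $\Lambda^2V$ used in the proof of Lemma~\ref{notucs}, together with a separate analysis of the non-totally-singular cases.
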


\begin{proof}
Suppose that $i\in\{0,\ldots,5\}$ and let $\mathcal N_i$ denote the set of 
subgroups with order $p^{i}$ in $(H_{p,4})'$. 
Then, for $M_1,\ M_2\in\mathcal
N_i$, we have $H_{p,4}/((H_{p,4})^pM_1)\cong H_{p,4}/((H_{p,4})^pM_2)$
if and only if $M_1$ and $M_2$ lie in the same $\GL(\overline{H_{p,4}})$-orbit
(see~\cite[Theorem~2.8]{obrien}). Therefore it suffices to show that the
subgroups $N_0,\ldots,N_{18}$ form a complete and irredundant set of 
representatives of the $\GL(\overline{H_{p,4}})$-orbits in $\bigcup
\mathcal N_i$.
A simple computation shows that $Q'$ is stabilized by
$\GL(\overline{H_{p,4}})$ up to scalar multiples
(alternatively see~\cite[Proposition~2.9.1(vii)]{kl}).
Thus $M_1$ and $M_2$ lie in the same $\GL(\overline{H_{p,4}})$-orbit
if and only if
$(M_1)^\perp$ and $(M_2)^\perp$ do. 
Therefore we only need to verify that the set $\{N_1,\ldots,N_{12}\}$ 
is a set of representatives of the $\GL(\overline{H_{p,4}})$-orbits in 
$\mathcal N_5\cup\mathcal N_4\cup\mathcal N_3$. These orbits have long been
known; they are listed already in Brahana's paper~\cite{brahana}. 
Our list above is taken 
from the recent classification of finite $p$-groups of order dividing $p^7$,
see~\cite{p6,p7}.
\end{proof}

Recall that $\{x_1,x_2,x_3,x_4\}$ is a fixed generating set for $H_{p,4}$. 
Risking confusion, let  $V$ denote a 4-dimensional vector space 
over $\F_p$ with basis $x_1,x_2,x_3,x_4$. This way the symbol $x_i$ 
may refer to an element of $H_{p,4}$, or to an element of $V$.
However, elements of the group $H_{p,4}$ are written multiplicatively,
while elements of $V$ are written additively.
There is a natural bijection 
$\Psi:\Lambda^2 V\rightarrow (H_{p,4})'$ 
mapping $x_i\wedge x_j\mapsto [x_i,x_j]$. Define a quadratic form 
$Q$ on $\Lambda^2 V$ by $Q(w)=Q'(\Psi(w))$ for all $w\in \Lambda^2 V$.
The value of the form $Q$ is given by:
\begin{multline*}
Q\left(\alpha_1x_1\wedge x_2+
\alpha_2x_1\w x_3+\alpha_3x_1\w x_4+\alpha_4x_2\w x_3+
\alpha_5x_2\w x_4+\alpha_6x_3\w x_4\right)\\
=\alpha_1\alpha_6-\alpha_2\alpha_5+\alpha_3\alpha_4.
\end{multline*}

Theorem~\ref{3th} says that a subspace $U\leq\Lambda^2 V$ gives rise to
an exponent-$p$ UCS $p$-group
$G_U:=H_{p,4}/((H_{p,4})^p\Psi(U))$ if and only
if the stabilizer $\GL(V)_U$
is irreducible on both $V$ and $\Lambda^2 V/U$. For $i=0,\ldots,18$,
let $U_i$ denote the subspace $\Psi^{-1}(N_i)$. Therefore we can check
which of the groups
$G_0,\ldots,G_{18}$ are UCS by checking, for $i=0,\ldots,18$, 
whether $\GL(V)_{U_i}$ is irreducible  on $V$ and on $\Lambda^2 V/U_i$.
This is carried out in the rest of this section.

A subspace $U\leq\Lambda^2 V$ is said to be \emph{degenerate} if
$U\cap U^\perp\neq 0$; otherwise it is said to be \emph{non-degenerate}.
A subspace $U$ is said to be \emph{totally isotropic} if $Q(u)=0$
for all $u\in U$.

\begin{lemma}\label{notucs}
If $U$ is a degenerate subspace of $\VwedgeV$, then $\GL(V)_U$ acts reducibly
on $V$ or $\VwedgeV/U$, and thus the $p$-group
$G_U:=H_{p,4}/((H_{p,4})^p\Psi(U))$ is not
a UCS-group. Further, the subspaces $U_i$ for
$i\in\{1,3,5,7,8,9,10,12,13,15,17\}$ are degenerate with respect to $Q$.
\end{lemma}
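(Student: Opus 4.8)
The plan is to prove the two assertions separately. For the first, suppose $U$ is a degenerate subspace, so that the radical $R:=U\cap U^\perp$ is a nonzero $\GL(V)_U$-invariant subspace of $\VwedgeV$ (it is invariant because $\GL(V)_U$ stabilizes $U$ and, since it preserves $Q'$ up to scalars by \cite[Proposition~2.9.1(vii)]{kl}, it also stabilizes the associated bilinear form up to scalars and hence preserves $U^\perp$). There are then two cases according to whether $R=U$ or $R\subsetneq U$. If $R\subsetneq U$, then $R$ is a nonzero proper $\GL(V)_U$-submodule of $U$, and its image $R/(R\cap U)=0$... more precisely I would argue that $U$ itself, being a nonzero proper submodule when $0<\dim U<6$, already forces reducibility on $\VwedgeV/U$ unless $U$ is $\GL(V)_U$-irreducible as a quotient; the correct dichotomy is: if $U=R$ then $U$ is totally isotropic and contained in $U^\perp$, so $U^\perp/U$ is a nonzero proper $\GL(V)_U$-invariant subspace of $\VwedgeV/U$, giving reducibility on $\VwedgeV/U$; if $U\ne R$ then $R$ is a nonzero proper $\GL(V)_U$-invariant subspace of $U\subseteq\VwedgeV$, but this does not immediately contradict irreducibility of $\VwedgeV/U$, so instead I pass to the quotient: $(U^\perp+U)/U$ is a $\GL(V)_U$-submodule of $\VwedgeV/U$, and I check it is nonzero (because $U\ne U^\perp$ when $\dim U\le 5$, as $\dim U^\perp=6-\dim U$) and proper (because $U^\perp+U\ne \VwedgeV$, since $R\ne 0$ means $\dim(U^\perp+U)=\dim U^\perp+\dim U-\dim R<6$). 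Either way $\GL(V)_U$ is reducible on $\VwedgeV/U$. Then Theorem~\ref{3th} (equivalence of (a) and (c), applied with the observation that $\Phi(H_{p,4})/((H_{p,4})^p\Psi(U))\cong (H_{p,4})'/\Psi(U)\cong\VwedgeV/U$ as $\GL(\overline H)_{(H_{p,4})^p\Psi(U)}$-modules, since $H^p\le N$) shows $G_U$ is not UCS.

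For the second assertion I would simply exhibit, for each of the eleven listed indices, an explicit nonzero vector in the radical $U_i\cap U_i^\perp$. Using the form $Q\bigl(\sum\alpha_k e_k\bigr)=\alpha_1\alpha_6-\alpha_2\alpha_5+\alpha_3\alpha_4$ with basis $e_1=x_1\w x_2$, $e_2=x_1\w x_3$, $e_3=x_1\w x_4$, $e_4=x_2\w x_3$, $e_5=x_2\w x_4$, $e_6=x_3\w x_4$, the associated bilinear form pairs $e_1$ with $e_6$, $e_2$ with $-e_5$, $e_3$ with $e_4$, and these are the only nonzero pairings. So $U_i$ is degenerate precisely when it contains a vector orthogonal to all of $U_i$; for the codimension-$\le 3$ spaces $U_1,U_3,U_5,U_7,U_8,U_9,U_{10},U_{12}$ one reads off directly whether some spanning vector pairs trivially with every generator, and for $U_{13},U_{15},U_{17}$ one uses $U_i=(U_{i-12})^\perp$ together with the fact that $U^{\perp}$ is degenerate iff $U$ is (both have radical $U\cap U^\perp$). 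For instance $U_7=\langle x_1\w x_4,\,x_2\w x_4,\,x_3\w x_4\rangle$ is totally isotropic and self-orthogonal except against $e_1,e_2,e_3$; checking, $x_3\w x_4=e_6$ pairs as $0$ with $e_3,e_5,e_6$ and the other generators $e_3,e_5$... one finds $\langle e_3,e_5,e_6\rangle$ is totally isotropic and $e_3=x_1\w x_4\in U_7$ is orthogonal to $e_5,e_6\in U_7^{\perp}$-computation — I would just tabulate, for each $i$, the spanning set of $U_i$, compute $U_i^\perp$ from the pairing table, and display one vector in the intersection.

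The routine but unavoidable core is the second part: eleven small linear-algebra verifications over $\F_p$, each amounting to writing down $U_i^\perp$ and spotting a common vector. None is hard, but there are enough of them that care is needed, particularly for the three perpendicular spaces $U_{13},U_{15},U_{17}$, where I would invoke the lemma "$U$ degenerate $\iff U^\perp$ degenerate" rather than recomputing. The only genuinely conceptual point is the first part, and there the mild subtlety is that when $U\ne U\cap U^\perp$ one must pass to the quotient module $(U^\perp+U)/U$ rather than argue inside $U$, so that the conclusion is reducibility on $\VwedgeV/U$ — which is exactly the hypothesis of Theorem~\ref{3th}(c) that fails. Everything else is bookkeeping: confirming $G_U=H_{p,4}/((H_{p,4})^pN_i)$ with $N_i=\Psi(U_i)$ has exponent $p$, so that $\Phi(G_U)\cong\VwedgeV/U_i$ as modules for $\GL(V)_{U_i}$, and then citing Theorem~\ref{3th}.
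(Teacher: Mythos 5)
Your reduction of ``not UCS'' to reducibility of $\GL(V)_U$ on $V$ or on $\VwedgeV/U$ via Theorem~\ref{3th}, and your plan for the second assertion (tabulating the radicals $U_i\cap U_i^\perp$ from the pairing $e_1\leftrightarrow e_6$, $e_2\leftrightarrow -e_5$, $e_3\leftrightarrow e_4$, and using that $U$ is degenerate if and only if $U^\perp$ is), are fine and agree with the paper. The gap is in the first part: your conclusion ``either way $\GL(V)_U$ is reducible on $\VwedgeV/U$'' is false, and the argument breaks exactly in the cases that require real work, namely when $U^\perp\leq U$. In your case $U=R$ the submodule $U^\perp/U$ is nonzero only when $\dim U<3$; for a maximal totally isotropic $U$, such as $U_7=V\w\langle x_4\rangle$ or $U_9=\Lambda^2\langle x_2,x_3,x_4\rangle$, one has $U=U^\perp$ and your submodule vanishes. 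In your case $U\ne R$ the claim $(U+U^\perp)/U\ne0$ needs $U^\perp\not\leq U$, which does not follow from $U\ne U^\perp$: for $U_1$ and $U_3$ the radical is $U^\perp$ itself, properly contained in $U$, so again $(U+U^\perp)/U=0$. These are not just holes in the argument; the asserted conclusion itself fails there. For $U_7$ the stabilizer is the stabilizer of the line $\langle x_4\rangle$, which acts irreducibly on $\VwedgeV/U_7\cong\Lambda^2(V/\langle x_4\rangle)$, and $\VwedgeV/U_1$ is $1$-dimensional, so no group acts reducibly on it. In all the cases with $U^\perp\leq U$ the disjunct that fails is irreducibility on $V$, not on the quotient.

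The paper's proof splits accordingly: if $U^\perp\not\leq U$ then, just as you argue, $(U+U^\perp)/U$ is a nonzero proper invariant subspace of $\VwedgeV/U$; but if $U^\perp\leq U$ then $U^\perp$ is a totally isotropic subspace of $\VwedgeV$ of dimension $1$, $2$ or $3$, and by the Klein correspondence (Taylor: a singular point is spanned by a decomposable vector $v_1\w v_2$; a singular line has the form $\langle v_1\w v_3, v_2\w v_3\rangle$; a singular plane equals $W_1\w V$ or $\Lambda^2 W_3$) the stabilizer $\GL(V)_U=\GL(V)_{U^\perp}$ fixes $\langle v_1,v_2\rangle$, $\langle v_1,v_2,v_3\rangle$, $W_1$ or $W_3$, hence is reducible on $V$. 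You need this geometric half (or, less elegantly, a case-by-case check of the four orbit representatives $U_1,U_3,U_7,U_9$ with $U^\perp\leq U$) to complete the proof; the computational second assertion of the lemma can then be handled as you propose.
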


\begin{proof}
Suppose that $U$ is degenerate. Then $U\cap U^\perp\ne0$, and $U+U^\perp$
is a proper subspace of $\Lambda^2 V$. Thus if $U^\perp\not\leq U$, then
$\GL(V)_U=\GL(V)_{U^\perp}$ is reducible on $\Lambda^2 V/U$ as it
fixes the proper subspace $(U+U^\perp)/U$.
Assume henceforth that $U^\perp\leq U$. 

Lemma~\ref{dual} says that there are
precisely 19 different
$\GL(V)$-orbits on the proper subspaces of $\VwedgeV$, and representatives
of the orbits are $U_0,U_1,\dots,U_{18}$. 
A straightforward calculation
shows that $U_i$ is degenerate if and only if $i\in \{1,3,5,7,8,9,10,12,13,15,17\}$. Moreover, the only $U_i$
satisfying $U^\perp\leq U$ are $U_1,U_3,U_7,U_9$. It is possible to prove case
by case that for these values of $i$, the group $G_{U_i}$ is not UCS.
However, instead of performing a case by case analysis, we offer a geometric
proof, which we find more elegant.

We shall show that for each degenerate subspace
$U\leq\Lambda^2V$ with $U^\perp\leq U$, the stabilizer $\GL(V)_{U^\perp}=\GL(V)_U$
is a reducible subgroup of $\GL(V)$. 
Since $\dim(U^\perp)=6-\dim(U)$, we see that
$\dim(U^\perp)=1,2,3$
and $U^\perp$ is a totally isotropic subspace of $\Lambda^2 V$.
First, suppose that $\dim(U^\perp)=1$.
By \cite[p.\,187]{taylor}, there exist linearly independent vectors
$v_1,v_2\in V$ such that $U^\perp=\langle v_1\w v_2\rangle$ and hence
$\GL(V)_{U^\perp}$ fixes $\langle v_1,v_2\rangle$ and so is reducible.
Second, suppose that $\dim(U^\perp)=2$.
Then also by \cite[Lemma~12.15]{taylor}, $U^\perp$
has the form $\langle v_1\w v_3,v_2\w v_3\rangle$ where
$v_1,v_2,v_3\in V$ are linearly independent. Thus
$\GL(V)_{U^\perp}$ fixes $\langle v_1,v_2,v_3\rangle$.
Finally, suppose that $\dim(U^\perp)=3$,
then by \cite[Theorem~12.16]{taylor}, $U^\perp$ equals
$W_1\wedge V$ or $W_3\wedge W_3$ where $W_1,W_3$ are subspaces of $V$
of dimension 1 and 3 respectively. Thus $\GL(V)_{U^\perp}$ fixes
$W_1$ or $W_3$, and so is reducible.
\end{proof}

The proof of the converse of Lemma~\ref{notucs} is substantially harder.
It is noteworthy that in the proof of Lemma~\ref{goodU} below,
$\Aut(G)^{\overline G}$ is commonly a maximal group from one of
Aschbacher's~\cite{A84} classes.

\begin{lemma}\label{goodU}
If $U$ is a non-degenerate subspace of $\VwedgeV$, then $\GL(V)_U$ acts
irreducibly on $V$ and $\VwedgeV/U$, and thus the $p$-group
$G_U:=H_{p,4}/((H_{p,4})^p\Psi(U))$ is a UCS-group. Further,
the subspaces $U_i$ for $i\in\{0,2,4,6,11,14,16,18\}$ are non-degenerate
with respect to $Q$.
\end{lemma}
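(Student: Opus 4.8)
The strategy is to dispose of the two halves of the statement separately. The ``Further'' clause — that $U_i$ is non-degenerate exactly for $i\in\{0,2,4,6,11,14,16,18\}$ — is a finite bookkeeping exercise: for each of the $19$ representative subspaces $U_i=\Psi^{-1}(N_i)$ listed before Lemma~\ref{dual}, compute the Gram matrix of the bilinear form $(\;,\;)$ associated to $Q$ restricted to $U_i$ and check whether it is nonsingular. Since Lemma~\ref{notucs} already verified that the complementary index set $\{1,3,5,7,8,9,10,12,13,15,17\}$ consists of degenerate subspaces, and there are exactly $19$ orbits, the non-degenerate ones are forced to be the remaining $8$; so in fact nothing new needs to be checked here beyond confirming the arithmetic. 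The substantive content is the first sentence: non-degeneracy of $U$ implies $\GL(V)_U$ is irreducible on both $V$ and $\VwedgeV/U$, whence $G_U$ is UCS by Theorem~\ref{3th}.

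For the main claim, the plan is to handle each of the eight non-degenerate orbit representatives $U_i$, $i\in\{0,2,4,6,11,14,16,18\}$, by exhibiting enough of the stabilizer $\GL(V)_{U_i}$ to force irreducibility. The key uniform tool is the Klein-correspondence setup already in place: $\GL(V)$ preserves $Q$ up to scalars (noted in the proof of Lemma~\ref{dual}), so $\GL(V)_{U}$ preserves $U^\perp$ and the induced forms, and hence sits inside a conformal orthogonal group of $V\cong\F_p^4$ acting through $\Omega_6^{\pm}(p)$-type symmetries on $\VwedgeV$. For each $U_i$ I would identify the relevant Aschbacher class: e.g. $U_0=0$ gives $\GL(V)_{U_0}=\GL_4(p)$ acting naturally and on the irreducible module $\VwedgeV$ (class $\mathcal C_1$ trivially handled); $U_{16}$, $U_{14}$ correspond to decompositions $V=V_1\oplus V_2$ or a field extension structure (class $\mathcal C_2$, giving $\GL_2(p)\wr S_2$ or $\GL_2(p^2)$-type stabilizers); $U_{11}$, $U_4$, $U_2$ to tensor or imprimitive structures; and $U_6$, $U_{18}$ (with the parameter $\alpha$ generating $\F_p^\times$ modulo signs) to a $\mathcal C_3$-type semilinear group $\GammaL_1(p^4)$ or a $\mathcal C_6$-type extraspecial normaliser. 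In each case one writes down generators of a subgroup $K\le\GL(V)_{U_i}$ that is manifestly irreducible on $V$ (a Singer cycle, or a wreath product together with a coordinate permutation, etc.), and then checks that the induced action on the $\le 3$-dimensional quotient $\VwedgeV/U_i$ is again irreducible — typically because $K$ contains an element acting as a scalar of multiplicative order not dividing $p-1$ on that quotient, or because the quotient is itself a known irreducible $K$-module (natural, dual, or a twist thereof).

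The step I expect to be the genuine obstacle is verifying irreducibility on the quotient $\VwedgeV/U_i$ in the ``middle'' cases, particularly $i=6$ and $i=18$ where the subgroup structure is an extension $(p-1).\,\textrm{(small)}$ rather than something with a built-in Singer cycle, and where the presence of $\alpha$ with $\F_p^\times=\langle-\alpha\rangle$ is doing real work: one has to show that no proper subspace of the $2$- or $3$-dimensional quotient is stabilised, which amounts to checking that the eigenvalues of a suitable diagonal or monomial generator are not all rationally proportional over $\F_p$. A clean way to organise this is: (1) reduce to showing $\GL(V)_{U_i}$ contains no common eigenvector on $V$ and on $\VwedgeV/U_i$; (2) pin down $\GL(V)_{U_i}$ precisely using the relation ``$g$ stabilises $U$ iff $g$ intertwines $g$ and $g\wedge g$ through the Gram matrix,'' exactly as in the $\dim V=3$ computation in Lemma~\ref{solemma}, now one dimension up; (3) read off irreducibility from the resulting concrete matrix group. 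Steps (1)–(3) are essentially the only real calculations; the Aschbacher-class identification is a convenient narrative but can be bypassed entirely by the direct generator-exhibiting argument, which is what I would actually write out in detail for each of the eight subspaces.
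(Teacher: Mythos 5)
Your overall route---work orbit-by-orbit through the non-degenerate representatives, pin down each stabilizer $\GL(V)_{U_i}$ exactly (via the intertwining relation, as in Lemma~\ref{solemma} one dimension up), and verify irreducibility on $V$ and on $\VwedgeV/U_i$---is precisely the route the paper takes, including the reliance on the orbit list of Lemma~\ref{dual} and on the invariance of $Q$ up to scalars. The difficulty is that your proposal defers exactly the part that constitutes the paper's proof, and the previews you give of the individual cases contain errors that would derail the execution. The paper shows $K_2=K_{14}\cong\GSp_4(p)$ (under the Klein correspondence the line $U_{14}=\langle x_1\w x_2+x_3\w x_4\rangle$ is a non-degenerate alternating form), not an imprimitive or field-extension subgroup; and $K_6=K_{18}\cong\GammaL_2(p^2)$, not $\GammaL_1(p^4)$ nor an extraspecial normalizer. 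In particular the step you single out as the genuine obstacle for $i=6,18$ (``no built-in Singer cycle'') is illusory: $K_6$ contains a Singer cycle of $\GL_2(p^2)$, of order $p^4-1$, and the paper's irreducibility argument on $U_6$ and $U_{18}$ is exactly an eigenvalue analysis of $g\w g$ for such an element. The role of $\alpha$ with $\F_p^\times=\langle-\alpha\rangle$ is to make $U_6$ the kernel of the map $\Lambda^2_{\F_p}V\to\Lambda^2_{\F_{p^2}}V$, i.e.\ to encode an $\F_{p^2}$-structure on $V$; it is not an eigenvalue-proportionality device.

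Two further points. Your reduction (1) is not valid: absence of a common eigenvector only rules out invariant lines, whereas $V$ is $4$-dimensional and the quotients $\VwedgeV/U_i$ have dimension up to $5$, so invariant subspaces of dimension at least $2$ must also be excluded; the paper does this with different tools in each case ($\Omega_5(p)\leq\GSp_4(p)$ acting on $U_2$, the outer tensor product $\GL_2(p)\boxtimes\GL_2(p)$ on $U_4$, the Singer-cycle eigenvalue computation on $U_6\oplus U_{18}$, and an $\Omega_3$-type form preserved by $\SL_2(p)$ inside $K_{11}\cong\GL_2(p)\Y\GL_2(p)$). Finally, a small logical slip: the fact that the eleven subspaces of Lemma~\ref{notucs} are degenerate does not ``force'' the remaining eight to be non-degenerate---degeneracy is not determined by counting orbits---so the Gram-matrix verification you mention is genuinely needed, not optional; and you could halve the case work by using $U_{i+12}=U_i^{\perp}$, which gives $K_i=K_{i+12}$ and $\VwedgeV/U_i\cong U_i^{\perp}$, as the paper does.
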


\begin{proof}
As remarked in the proof of Lemma~\ref{notucs}, $U_0,U_1,\dots,U_{18}$
is a complete and irredundant list of representatives of
$\GL(V)$-orbits on the proper subspaces of $\VwedgeV$.
Moreover, $U_i$ is non-degenerate if and only if
$i\in\{0,2,4,6,11,14,16,18\}$.
Denote the stabilizer $\GL(V)_{U_i}$ by $K_i$. 
We prove below, in a case by case manner, that $K_i$
acts irreducibly on $V$ and $\VwedgeV/U_i$ for $i\in\{0,2,4,6,11,14,16,18\}$.

Since $\GL(V)$ is irreducible on $V$ and $\Lambda^2V$, $G_0$ is a UCS-group.
Suppose henceforth that $U_i\neq0$. For non-degenerate $U$,
$\VwedgeV=U\oplus U^\perp$ and $\GL(V)_U=\GL(V)_{U^\perp}$. Since
$U_{i+12}=U_i^\perp$ for $1\leq i\leq6$, we see that $K_i=K_{i+12}$. Thus it suffices
to prove that $K_i$ is irreducible on the spaces $V$, $U_i$, and
$U_i^\perp\cong\Lambda^2 V/U_i$ for $i\in\{2,4,6,11\}$.

Consider first the stabilizer $K_2$. Note that
$U_2^\perp=U_{14}=\left<x_1\w x_2+x_3\w x_4\right>$. View $g\in\GL(V)$ 
as a $4\times 4$ matrix with respect to the basis $x_1,x_2,x_3,x_4$ of $V$.
The equation
\[
  (x_1\w x_2+x_3\w x_4)(g\w g)=\alpha(x_1\w x_2+x_3\w x_4)
  \quad\text{for some non-zero $\alpha\in\F_p$},
\]
gives a linear system equivalent to the matrix equation $g^TJg=\alpha J$ where
$$
J=\left(\begin{array}{cccc}
0 & 1 & 0 & 0 \\
-1 & 0 & 0 & 0 \\
0 & 0 & 0 & 1 \\
0 & 0 & -1 & 0
\end{array}\right).
$$
Thus $g\in K_2$ if and only if $g$ preserves the alternating
form $J$ up to a scalar factor. In other words,
$K_2\cong\GSp_4(p)$. Clearly, $\GSp_4(p)$ is irreducible on $V$,
and on the $1$-dimensional subspace $U_{14}$. Additionally, 
the $K_2$-action on $U_2$ is irreducible as $K_2$
contains a subgroup isomorphic 
to $\Omega_5(p)$  (see \cite[Proposition~2.9.1(vi)]{kl}),
and $\Omega_5(p)$ acts irreducibly on the 5-dimensional space $U_2$.

Consider now the stabilizer $K_4$. Set $L_1=\left<x_1,x_2\right>$ and
$L_2=\left<x_3,x_4\right>$. Let $H$ be the stabilizer of the
decomposition $V=L_1\oplus L_2$. Then $H\cong\GL_2(p)\wr C_2$. 
We aim to prove that $H=K_4$.
It is routine to check that $H$ stabilizes
$U_{16}=\left<x_1\w x_2,x_3\w x_4\right>$ and $U_{16}^\perp=U_4$. 
Thus $H\leq K_4$. Set $P_1=\left<x_1\w x_2\right>$ and
$P_2=\left<x_3\w x_4\right>$. Then $U_{16}=P_1\oplus P_2$ and $P_1,P_2$
are the only 1-dimensional totally isotopic subspaces of $U_{16}$.
Thus $K_4$ permutes the set $\{P_1,P_2\}$. The following argument
shows that $\GL(V)_{P_1}=\GL(V)_{L_1}$:
\begin{align*}
  g\in\GL(V)_{P_1}&\Longleftrightarrow 
    (x_1\wedge x_2)(g\wedge g)=\alpha(x_1\wedge x_2)
    \quad\text{for some}\quad\alpha\in\F_p^\times\\
  &\Longleftrightarrow \langle x_1,x_2\rangle g=\langle x_1,x_2\rangle\\
  &\Longleftrightarrow g\in\GL(V)_{L_1}.
\end{align*}
Denote by $\widetilde{H}$ the subgroup of $H$ that fixes both
$L_1$ and $L_2$, and denote by $\widetilde{K_4}$ the subgroup of $K_4$
that fixes both $P_1$ and $P_2$. Since
$\GL(V)_{L_1}=\GL(V)_{P_1}$, we see that $\widetilde{H}=\widetilde{K_4}$.
Since $|K_4:\widetilde{K_4}|=2$ and $|H:\widetilde{H}|=2$, it follows 
that $|H|=|K_4|$. Thus $H=K_4$ as desired.

We claim that $K_4$ is irreducible on $V$, $U_4$, and on $U_{16}$.
The only proper and non-trivial $\overline K_4$-submodules
of $V$ are $L_1$ and $L_2$. These are swapped by $K_4$, and so
$K_4$ is irreducible on
$V$. Suppose that $g\in \overline K_4$ is represented by the block-matrix
$$
\left(\begin{array}{cc}
A & 0 \\
0 & B\end{array}\right).
$$
Then the action of $g$ on $U_{16}$ relative to the basis
$x_1\w x_2,x_3\w x_4$ has matrix
$$
\left(\begin{array}{cc}
\det A & 0 \\
0 & \det B\end{array}\right).
$$
Therefore the only proper, non-trivial $\overline K_4$-submodules of
$U_{16}$ are $P_1$ and $P_2$. However, $K_4$ contains an element
that swaps $P_1$ and $P_2$, and thus $K_4$ is irreducible on $U_{16}$. 
Recall that $U_4=\left<x_1\w x_3,x_1\w x_4,x_2\w x_3,x_2\w x_4\right>$.
The action of $\overline K_4$ on $U_4$ is equivalent to the action 
of $\GL_2(p)\times \GL_2(p)$ on the tensor product
$\left<x_1,x_2\right>\otimes\left<x_3,x_4\right>$;
the equivalence is realized by the map $x_i\w x_j\mapsto x_i\otimes x_j$.
Since $\GL_2(p)$ is absolutely irreducible, \cite[8.4.2]{rob} can be used to
obtain that the outer tensor product
$\GL_2(p)\boxtimes \GL_2(p)$ acts irreducibly on $\F_p^4$.
Hence $\overline K_4$, and therefore $K_4$, acts irreducibly on $U_4$. 

The next stabilizer to consider is $K_6$. We shall show that $K_6$ acts
irreducibly on $V$, $U_6$ and $U_{18}=U_6^\perp$. The definition of the
subspace $U_6$ involves a fixed $\alpha\in\F_p^\times$ such that $-\alpha$
generates $\F_p^\times$. Note that
\begin{equation}\label{U18}
  U_{18}=U_6^\perp=\left<x_1\w x_4+x_2\w x_3,\alpha x_1\w x_3-x_2\w x_4\right>.
\end{equation}
We shall prove that $K_6=K_{18}\cong\GammaL_2(p^2)$. 
Identify $V$ with $\F_{p^2}\oplus\F_{p^2}$ 
as follows. Since $-\alpha$ is a non-square in $\F_p$, its square roots,
$\pm\beta$, lie in $\F_{p^2}\setminus\F_p$. 
Then $(1,0),(\beta,0),(0,1),(0,\beta)$ is an $\F_p$-basis of 
$\F_{p^2}\oplus\F_{p^2}$. 
Identify the elements 
$x_1$, $x_2$, 
$x_3$, $x_4$ with $(1,0)$, $(\beta,0)$, $(0,1)$, $(0,\beta)$, respectively.
We view $\GammaL_2(p^2)$
as a subgroup of $\GL_4(p)$ under the identification above.

Let $\varepsilon:\Lambda^2_{\F_p}V\rightarrow \Lambda^2_{\F_{p^2}}V$ be the unique
$\F_p$-linear map satisfying $\varepsilon(u\wedge v)=u\wedge v$. 
Easy computation shows that $U_6\leq \ker\varepsilon$. On the other
hand, by dimension counting, we have that $\dim(\ker\varepsilon) =4$, which
gives $U_6=\ker \varepsilon$. 
Suppose that $U$ is a 2-dimensional $\F_p$-subspace in $V$. 
Then $U$ is an 
$\F_{p^2}$-subspace of $V$ if and only if $\varepsilon(U\wedge U)=0$; that is,
$U\wedge U\leq U_6$. Thus $K_6$ is the setwise stabilizer of 
the $\F_{p^2}$-subspaces of $V$. 

Clearly, the group $\GammaL_2(p^2)$ permutes the
$\F_{p^2}$ subspaces of $V$, and hence $\GammaL_2(p^2)\leq K_6$. 
In order to show the other direction, 
let $g\in\GL_4(p)$ and suppose that $g$ stabilizes $U_6$. 
Then $g$ permutes the
$\F_{p^2}$-subspaces. Thus we have, for $v\in V$ and $\alpha\in\F_{p^2}$, that 
\begin{equation}\label{beta}
(\alpha v)g=\beta(v g)
\end{equation}
 with some $\beta\in\F_{p^2}$. Let $v_1,\ v_2\in V$ be 
linearly independent over $\F_{p^2}$. Then, as $g$ permutes the $\F_{p^2}$-subspaces, 
$v_1g$ and $v_2g$ are linearly
independent over $\F_{p^2}$. Let $\beta_1,\ \beta_2\in\F_{p^2}$
such that  $(\alpha v_1)g=\beta_1 (v_1g)$ and
$(\alpha v_2)g=\beta_2(v_2g)$. Then there is some $\gamma\in\F_{p^2}$ such that
$(\alpha v_1+\alpha v_2)g=\gamma(v_1g+v_2g)$, 
but also $(\alpha v_1+\alpha v_2)g=\beta_1 (v_1g)+\beta_2 (v_2g)$. 
This shows that
$\beta_1=\beta_2$, and therefore $\beta$ is independent of $v$ in 
equation~\eqref{beta}. Hence for all $\alpha\in\F_{p^2}$ there
is some $\varphi(\alpha)\in\F_{p^2}$ such that $(\alpha v)g=\varphi(\alpha) (vg)$. As 
$$
\varphi(\alpha_1+\alpha_2)(vg)=((\alpha_1+\alpha_2)v)g=(\alpha_1v+\alpha_2v)g=(\varphi(\alpha_1)+\varphi(\alpha_2))(vg),
$$ 
we obtain that $\varphi$ is additive. Since
$\varphi(\alpha\beta) vg=
(\alpha\beta v)g=\varphi(\alpha)((\beta v)g)=\varphi(\alpha)\varphi(\beta)(vg)$,
we have that $\varphi$ is a field automorphism. Since $\varphi$ fixes $\F_p$
pointwise, $\varphi$ is a member of the Galois group of $\F_{p^2}$ over
$\F_p$. Hence $g$ is a semilinear transformation which 
gives that $K_6\leq\GammaL_2(p^2)$. Therefore $K_6=\GammaL_2(p^2)$, as claimed.

To show that $K_6$ acts irreducibly on $U_6$ and $U_{18}$ let us take a 
Singer cycle, i.e., an element $g$ of order $p^4-1$ in $\GL_2(p^2) < K_6$. 
Let $\varepsilon\in \F_{p^4}$ be an eigenvalue of $g$. 
Then the eigenvalues of $g$ are $\varepsilon$, $\varepsilon^p$, 
$\varepsilon^{p^2}$, 
$\varepsilon^{p^3}$, and so the eigenvalues of $g\wedge g$ on $V\wedge V$ are 
$\eta=\varepsilon^{1+p}$, $\eta^p$, $\eta^{p^2}$, $\eta^{p^3}$,
$\theta=\varepsilon^{1+p^2}$, and $\theta^p$. The order of $\eta$ is 
$(p^4-1)/(p+1)>p^2$, hence $\eta$, $\eta^p$, $\eta^{p^2}$, $\eta^{p^3}$ 
are all different. Similarly, the order of $\theta$ is $p^2-1$, 
so $\theta$ and $\theta^p$ are distinct. This means that the characteristic 
polynomial of $g\wedge g$ is the product of two irreducible factors, one of 
degree 4, the another of degree 2. Hence $V\wedge V$ decomposes into a direct 
sum of a 4-dimensional and a 2-dimensional irreducible $\langle g\wedge g 
\rangle$-submodules. Since $U_6$ and $U_{18}$ are invariant under 
$g\wedge g$, we obtain that these are the irreducible summands.

Finally, consider $K_{11}$.
We identify $V$ with 
the tensor product $\left<u_1,u_2\right>\otimes\left<v_1,v_2\right>$ by 
assigning $x_1$, 
$x_2$, $x_3$, $x_4$ to $-u_1\otimes v_1$, $u_2\otimes v_1$, 
$u_2\otimes v_2$, $u_1\otimes v_2$, respectively. Hence the group
$\GL_2(p)\times\GL_2(p)$ acts on $V$, and the action of 
the element 
$$
\left(\begin{array}{cc}
\alpha_{11} & \alpha_{12}\\
\alpha_{21} & \alpha_{22}
\end{array}\right)
\otimes
\left(\begin{array}{cc}
\beta_{11} & \beta_{12}\\
\beta_{21} & \beta_{22}
\end{array}\right)
$$
is represented by the matrix
\begin{equation}\label{tensor}
\left(\begin{array}{cccc}
\alpha_{11}\beta_{11} & -\alpha_{12}\beta_{11} & -\alpha_{12}\beta_{12} & -\alpha_{11}\beta_{12}\\
-\alpha_{21}\beta_{11} & \alpha_{22}\beta_{11} & \alpha_{22}\beta_{12} & \alpha_{21}\beta_{12}\\
-\alpha_{21}\beta_{21} & \alpha_{22}\beta_{21} & \alpha_{22}\beta_{22} & \alpha_{21}\beta_{22}\\
-\alpha_{11}\beta_{21} & \alpha_{12}\beta_{21} & \alpha_{12}\beta_{22} & \alpha_{11}\beta_{22}
\end{array}\right).
\end{equation}
The kernel of the action of $\GL_2(p)\times\GL_2(p)$ on $V$ equals
$\{\lambda I\otimes \lambda^{-1}I\mid\lambda\in\F_p^\times\}$.
Thus the central product
$\GL_2(p)\Y\GL_2(p)$ acts faithfully on $V$.
Let $H$ denote the group of all matrices
of the form~\eqref{tensor}.
Elementary, but cumbersome, calculation shows that the group 
$H$ is the stabilizer of 
$U_{11}$, and so $H=K_{11}$. 
In particular 
$K_{11}\cong \GL_2(p)\Y\GL_2(p)$. As $\GL_2(p)$ is absolutely irreducible
on $\F_p^2$, by~\cite[8.4.2]{rob},  $K_{11}$ is irreducible on $V$.
 
Consider the subgroups $T_1$ and $T_2$ of $K_{11}$ 
consisting of the elements of 
the form
$$
\left(\begin{array}{cc}
\alpha_{11} & \alpha_{12}\\
\alpha_{21} & \alpha_{22}
\end{array}\right)
\otimes
\left(\begin{array}{cc}
1 & 0\\
0 & 1
\end{array}\right)\quad\mbox{and}\quad\left(\begin{array}{cc}
1 & 0\\
0 & 1
\end{array}\right)
\otimes
\left(\begin{array}{cc}
\beta_{11} & \beta_{12}\\
\beta_{21} & \beta_{22}
\end{array}\right).
$$
Simple computation shows that the action of a generic element of 
$T_1$ on $U_{11}$ is represented 
by the matrix
$$
\left(
\begin{array}{ccc}\alpha_{11}^2 & -\alpha_{12}^2 & -\alpha_{12}\alpha_{11}\\
-\alpha_{21}^2 & \alpha_{22}^2 & \alpha_{22}\alpha_{21}\\
-2\alpha_{21}\alpha_{11} & 2\alpha_{22}\alpha_{12} & \alpha_{22}\alpha_{11}+\alpha_{12}\alpha_{21}
\end{array}\right).
$$
Since $T_1$ is
isomorphic to $\GL_2(p)$, the derived subgroup $(T_1)'$ is isomorphic to 
$\SL_2(p)$ and simple computation shows that $(T_1)'$ induces a subgroup 
of $\SL(U_{11})$. Moreover, $(T_1)'$ preserves the symmetric bilinear form
\begin{multline*}
(\alpha_1 x_1\wedge x_4+\alpha_2x_2\wedge x_3+\alpha_3(x_2\wedge x_4-x_1\wedge
  x_3),\beta_1x_1\wedge x_4+\beta_2x_2\wedge x_3+\beta_3(x_2\wedge
  x_4-x_1\wedge x_3))\\=
-\alpha_1\beta_2-\alpha_2\beta_1-2\alpha_3\beta_3.
\end{multline*}
It is shown in~\cite[Proposition~2.9.1(ii)]{kl} that $(T_1)'$ induces  $\Omega(\hat Q)$,
where $\hat Q$ is the quadratic form induced by the above bilinear form.
As $\Omega(\hat Q)$ is irreducible, we obtain that $K_{11}$ 
is irreducible on $U_{11}$. 
Replacing $T_1$ by $T_2$, the same argument shows
that $K_{11}$ is irreducible on $U_{11}^\perp$. 
\end{proof}

The proof of Theorem~\ref{4gen} is now straightforward.

\begin{proof}[The proof of Theorem~$\ref{4gen}$]
As explained in Lemma~\ref{goodU}, $G_i$ is UCS if and 
only if $\GL(V)_{U_i}$ is irreducible on both $V$ and $\Lambda^2V/U_i$.
Hence Lemmas~\ref{notucs} and~\ref{goodU} imply Theorem~\ref{4gen}.
\end{proof}

UCS $p$-groups with exponent $p^2$ are studied in the next section. 
It follows from
Theorems~\ref{4gen} and \ref{3th} that a 4-generator
exponent-$p^2$ UCS $p$-group is a quotient of either $H_{p,4}/N_{16}$ or
$H_{p,4}/N_{18}$. 
Determining precisely which of these two cases leads to UCS $p$-groups 
involves subtle
isomorphism problems which depend on the value of the prime~$p$.
This is illustrated in the the next section.

\section{Exterior self-quotient modules}\label{s7}

The study of UCS $p$-groups with exponent $p^2$ is reduced by
Theorem~\ref{UCSconstr}(b) to considering a problem in
representation theory. 
Recall that the concepts of ESQ-modules and ESQ-groups were defined 
in Section~\ref{ucsexist}.
Unlike the property of irreducibility, the ESQ-property is preserved
under subgroups and field extensions, as shown by the following lemma.

\begin{lemma}\label{ESQstructure}
Let $V$ be an ESQ $\,\F G$-module. Then
\begin{itemize}
\item[(a)] every subgroup $H$ of $G$ is also an ESQ-subgroup of $\GL(V)$.
\item[(b)] $V\otimes_{\F}\E$ is an ESQ
$\E G$-module for every extension field $\E$ of $\F$.
\item[(c)] $G$ contains no non-trivial scalar matrices, and $\dim(V)\geq 3$.
\end{itemize}
\end{lemma}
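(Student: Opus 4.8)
The plan is to derive all three parts directly from the definition of an ESQ-module, using only standard facts about exterior squares. Throughout, fix an $\F G$-submodule $U$ of $\Lambda^2 V$ together with an $\F G$-isomorphism $\varphi\colon(\Lambda^2 V)/U\to V$, which exists by hypothesis.

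For part~(a), let $H\leq G$. Restricting the $G$-action, $U$ remains an $\F H$-submodule of $\Lambda^2 V$ and $\varphi$ remains an $\F H$-isomorphism, so $V$ is an ESQ $\F H$-module; since $H\leq G\leq\GL(V)$ acts faithfully, $H$ is an ESQ-subgroup of $\GL(V)$. This step is immediate. For part~(b), the key input is the base-change isomorphism $\Lambda^2_\E(V\otimes_\F\E)\cong(\Lambda^2_\F V)\otimes_\F\E$ of $\E G$-modules, which holds because forming the exterior square commutes with extension of scalars. As $-\otimes_\F\E$ is exact, applying it to the short exact sequence $0\to U\to\Lambda^2_\F V\to V\to0$ of $\F G$-modules (whose surjection is induced by $\varphi$) gives a short exact sequence $0\to U\otimes_\F\E\to(\Lambda^2_\F V)\otimes_\F\E\to V\otimes_\F\E\to0$ of $\E G$-modules; composing with the base-change isomorphism exhibits $U\otimes_\F\E$ as an $\E G$-submodule of $\Lambda^2_\E(V\otimes_\F\E)$ with quotient isomorphic to $V\otimes_\F\E$, so the latter is ESQ. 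The one point needing attention here is the $G$-equivariance of the base-change isomorphism, which follows from its naturality and is routine.

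For part~(c), first set $d=\dim_\F V$. From $(\Lambda^2 V)/U\cong V$ we get $\binom{d}{2}-\dim_\F U=d$; since $\dim_\F U\geq0$ this gives $\binom{d}{2}\geq d$, and as the module $V$ is nonzero (so $d\geq1$) we conclude $d\geq3$, in particular $V\neq0$. Next, suppose $z=\lambda I\in G$ for some $\lambda\in\F^\times$. Then $z$ acts on $V$ as the scalar $\lambda$, and on $\Lambda^2 V$ (hence on the quotient $(\Lambda^2 V)/U$) as the scalar $\lambda^2$, since $(\lambda u)\wedge(\lambda v)=\lambda^2(u\wedge v)$. As $\varphi$ intertwines these two scalar actions and $V\neq0$, we obtain $\lambda^2=\lambda$, whence $\lambda=1$ and $z=I$. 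None of the three parts presents a genuine obstacle; the only mildly technical point is recording, in part~(b), why the exterior square commutes with base change in a $G$-equivariant way.
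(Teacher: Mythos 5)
Your proof is correct and takes essentially the same approach as the paper: the authors dismiss parts (a) and (b) as routine (your restriction and base-change arguments are precisely the omitted details, including the $G$-equivariant isomorphism $\Lambda^2_\E(V\otimes_\F\E)\cong(\Lambda^2_\F V)\otimes_\F\E$), and their proof of (c) is exactly your argument that $\lambda I\in G$ forces $\lambda^2=\lambda$ together with $\dim(\Lambda^2 V)\geq\dim(V)$ forcing $\dim(V)\geq 3$.
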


\begin{proof}
Parts (a) and (b) are routine to verify. 
If a scalar matrix $\lambda I$
lies in $G$, then it follows from $\VwedgeV/U\cong V$ that
$\lambda^2=\lambda$ and hence $\lambda=1$. In addition,
$\dim(\VwedgeV)\geq\dim(V)$ implies $\dim(V)\geq3$. Hence part (c) holds.
\end{proof}

An irreducible ESQ-module can give rise to a smaller
dimensional irreducible module over a larger field which does not enjoy the
ESQ-property. For example, a 5-dimensional irreducible, but not
absolutely irreducible, ESQ-module over $\F_q$, gives rise to a
one dimensional irreducible module over $\F_{q^5}$ which is not an
ESQ-module by Lemma~\ref{ESQstructure}(c).

Let $r$ and $q$ be coprime integers. Denote the order of $q$ modulo $r$ by
$\ord_r(q)$. Then $\ord_r(q)$ is the
smallest positive integer $n$ satisfying $q^n\equiv 1\pmod r$.
\textsc{Warning:}~The variables $p$, $r$, and $G$ have different meanings
in the following discussion about ESQ-groups, to the previous discussion about
UCS-groups. 

\begin{theorem}\label{d=p}
Let $p$ be a prime, $q$ a power of a prime (possibly distinct from $p$), 
and let $G$ be a minimal irreducible ESQ-subgroup
of $\GL_p(q)$. Then one of the following holds: 
\begin{itemize}
\item[(a)] $G$ is not absolutely irreducible, $r:=|G|$ is prime,
$\ord_r(q)=p$ and there exist {\it distinct}
$\alpha,\ \beta\in\langle q\rangle\leq\F_r^\times$ such that $\alpha+\beta=1$;
\item[(b)] $G$ is an absolutely irreducible non-abelian simple group;
\item[(c)] $G$ is absolutely irreducible, $|G|=pr^s$ where $r$ is a
prime different to $p$, $\ord_r(q)=1$, and $s=\ord_p(r)$. Moreover,
$G'$ is an elementary abelian group of order $r^s$ and
$G/G'$ has order $p$ and acts irreducibly on $G'$.
\end{itemize} 
\end{theorem}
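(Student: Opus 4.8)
The plan is to record first that, by Lemma~\ref{ESQstructure}(c), $G$ contains no nontrivial scalar matrix; since an irreducible linear group has scalar centre, this gives $Z(G)=1$, and the same lemma gives $p=\dim V\geq 3$. I then split according to whether $V$ is absolutely irreducible. Suppose it is not. Then $E:=\End_{\F_q G}(V)$ is a finite field $\F_{q^d}$ with $1<d\mid\dim_{\F_q}V=p$, forcing $d=p$ and $\dim_E V=1$; as $G$ centralizes $E$, it lies in the commutant $\End_E(V)\cong\F_{q^p}$ acting by scalars, so $G$ is cyclic, $G\leq\F_{q^p}^\times$. Put $r=|G|$. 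A generator is a primitive $r$-th root of unity, and its minimal polynomial over $\F_q$ has degree $\dim V=p$, so $\ord_r(q)=p$. Minimality of $G$ means no divisor $1<m<r$ with $m\nmid q-1$ exists; a short argument (two distinct prime divisors of $r$ would give $r\mid q-1$; a single prime power $\ell^a$ with $a\geq2$ would give $\ord_r(q)=\ell$, hence $\ell=p$, and then the order-$p$ subgroup of $(\Z/p^a)^\times$ contains no two elements summing to $1$, contradicting the ESQ criterion below) shows $r$ is prime. For that criterion: a generator $g$ acts on $V\otimes\overline{\F_q}$ with eigenvalues $\zeta,\zeta^q,\dots,\zeta^{q^{p-1}}$ for a primitive $r$-th root of unity $\zeta$, hence on $\Lambda^2 V$ with eigenvalues $\zeta^{q^i+q^j}$ ($i<j$); since $V$ is a quotient of $\Lambda^2V$, some $\zeta^{q^\ell}=\zeta^{q^i+q^j}$, and dividing by $q^\ell$ produces $\alpha,\beta\in\langle q\rangle\leq\F_r^\times$ with $\alpha+\beta=1$, distinct because $\ord_r(q)=p>|i-j|$. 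This is case~(a).

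Now suppose $V$ is absolutely irreducible, and let $N$ be a minimal normal subgroup, so $N=T_1\times\cdots\times T_k$ with the $T_i$ simple of one isomorphism type. If the $T_i$ are nonabelian, then the $N$-homogeneous components of $V|_N$ are permuted by $G$ and their number divides $p$; it cannot be $p$, for then the kernel of the resulting transitive action of degree $p$ would be an abelian group (acting diagonally on a frame of $p$ lines) containing the nonabelian $N$. So $V|_N$ is homogeneous; its underlying irreducible has dimension dividing $p$, and dimension $1$ is impossible ($N$ is perfect and acts faithfully), so it has dimension $p$. A splitting-field Clifford argument then forces the component to be absolutely irreducible, so the outer tensor decomposition of an irreducible module for a direct product gives $k=1$. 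Thus $N$ is nonabelian simple and irreducible on $V$, $C_G(N)$ is scalar hence trivial, and minimality of $G$ gives $G=N$, a nonabelian simple group. This is case~(b).

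If instead the $T_i$ are abelian, $N$ is a nontrivial elementary abelian $\ell$-group. Being non-scalar, $N$ splits $V$ into $p$ pairwise distinct one-dimensional $N$-eigenspaces $V_1,\dots,V_p$; by Cauchy there is $\sigma\in G$ whose image in $G/\ker(G\to S_p)$ is a $p$-cycle, and then $\sigma$ permutes the $V_i$ cyclically. Since the $N$-characters on the $V_i$ are distinct, Clifford theory shows $N\langle\sigma\rangle$ is already irreducible, so minimality gives $G=N\langle\sigma\rangle$; moreover $C_N(\sigma)\normal G$ lies in the minimal normal subgroup $N$ and is not all of $N$ (else $N$ would be central, hence scalar, hence trivial), so $C_N(\sigma)=1$, $(\sigma-1)N=N$, and $G'=[\,\langle\sigma\rangle,N\,]=N$ is elementary abelian. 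A further use of minimality (preventing a proper $N\langle\sigma^{c}\rangle$ with $\bar\sigma^{c}$ still a $p$-cycle from being irreducible) shows $|G:N|=p$, so $G/G'\cong C_p$ acts on $N$ with no trivial constituent (as $\sigma$ has no nonzero fixed point), i.e.\ $N$ is an irreducible $\F_\ell C_p$-module; this forces $\ell\neq p$ (over $\F_p$ the map $\sigma-1$ would be nilpotent, not fixed-point-free) and $\dim_{\F_\ell}N=s:=\ord_p(\ell)$. Writing $r=\ell$ we get $|G|=pr^s$ with $G'$ elementary abelian of order $r^s$ and $G/G'$ of order $p$ acting irreducibly; finally $\ord_r(q)=1$ because the faithful diagonal action embeds $N\cong(C_r)^s$ into $(\F_q^\times)^p$, forcing $r\mid q-1$. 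This is case~(c).

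I expect the technical core — the place that needs careful bookkeeping rather than a new idea — to be the monomial case: establishing that $V|_N$ really splits into exactly $p$ \emph{distinct} lines, that $N\langle\sigma\rangle$ is irreducible, and above all that minimality pins $|G:N|$ down to $p$ (one must exclude a larger cyclic quotient $G/N$, playing the absence of nontrivial scalars against the structure of $\langle\sigma\rangle$) and that the smallest admissible $N$ is exactly $(C_r)^{\ord_p(r)}$. The nonabelian case is conceptually clean, with the one subtlety handled by base change to a splitting field, namely that the homogeneous component of $V|_N$ is genuinely absolutely irreducible so that the dimension count $\prod_i\dim W_i=p$ is legitimate. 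The not-absolutely-irreducible case is the easiest once one has the eigenvalue description of the action on $\Lambda^2 V$.
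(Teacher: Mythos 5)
Your overall architecture matches the paper's (split on absolute irreducibility, Clifford analysis of a minimal normal subgroup $N$, monomial structure in the abelian case), and most local steps are sound, but there is one genuine gap, in case (c): the step ``a further use of minimality \dots shows $|G:N|=p$.'' Your device --- exhibit a proper irreducible subgroup $N\langle\sigma^{c}\rangle$ with $\bar\sigma^{c}$ still inducing a $p$-cycle --- needs an exponent $c$ with $p\nmid c$ and $\gcd(c,|G/N|)>1$, and no such $c$ exists when $G/N$ is a cyclic $p$-group of order $p^{a}$ with $a\geq 2$: every power of $\sigma$ that still induces a $p$-cycle generates $G/N$, while every proper subgroup of $G$ containing $N$ lies in the diagonal kernel of the action on the eigenlines and is reducible anyway, so minimality gives no contradiction. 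Hence your argument as written does not exclude $|G:N|=p^{a}$, $a\geq2$. The missing ingredient is precisely the paper's computation: with $e_{i}:=e_{0}\sigma^{i}$ a basis consisting of one nonzero vector from each line $V_{i}$, one has $e_{i}\sigma^{p}=e_{0}\sigma^{p}\sigma^{i}=\lambda e_{i}$ for all $i$, so $\sigma^{p}$ is a scalar matrix and therefore $\sigma^{p}=1$ by Lemma~\ref{ESQstructure}(c); thus \emph{any} element inducing a $p$-cycle has order exactly $p$ and $|N\langle\sigma\rangle|=p|N|$ outright, with no further appeal to minimality. You gesture at ``playing the absence of nontrivial scalars against the structure of $\langle\sigma\rangle$'' but never supply this argument, and it is needed.

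Smaller points. The opening claim $Z(G)=1$ is false in case (a), where $G$ is cyclic and equals its own centre: ``irreducible implies scalar centre'' holds only for absolutely irreducible groups, since for a merely irreducible group the centre lies in the centralizing field $\End_{\F_qG}(V)$; you never use the claim where it fails, but it should be deleted or restricted. In case (c), ``being non-scalar, $N$ splits $V$ into $p$ distinct lines'' is not a complete justification: an abelian normal subgroup could a priori act irreducibly over $\F_q$ in prime dimension; rule this out as the paper does, by noting that $N$ is proper (an abelian absolutely irreducible group has degree $1$) and hence reducible by minimality, after which Clifford plus the no-scalar lemma gives the $p$ pairwise distinct $\F_q$-characters you need (and these $\F_q$-valued characters are also what your later deduction $r\mid q-1$ rests on). Likewise, irreducibility of $N$ as an $\F_rC_p$-module comes from the minimal normality of $N$, not from the absence of a trivial constituent; fixed-point-freeness of $\sigma$ gives nontriviality and $r\neq p$, as you say. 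Where your route genuinely differs from the paper and is correct: in (a) you work inside the commutant $\End_E(V)\cong\F_{q^p}$ instead of extending scalars, and you prove primality of $r$ arithmetically (subgroup structure of $\F_{q^p}^{\times}$ plus the eigenvalue criterion) rather than via the paper's Schur-lemma argument; and you analyse a nonabelian minimal normal subgroup directly (homogeneous components, tensor factorization), work the paper avoids by observing that in the non-simple case the minimal normal subgroup is proper, hence reducible by minimality, hence abelian.
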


\begin{proof}
Denote by $V=(\F_q)^p$ the corresponding ESQ $\F_q G$-module. Here
$p$ and $\textup{char}(\F_q)$ may be distinct primes.
By Lemma~\ref{ESQstructure}(a), subgroups of ESQ-groups are ESQ-groups,
and hence by the minimality of $G$, proper subgroups of $G$ act
reducibly on $V$. If $H$ is a 
non-trivial abelian normal subgroup of $G$, then by Clifford's theorem
either $H$ acts irreducibly on $V$, or
$V=V_0\oplus V_1\oplus\cdots\oplus V_{p-1}$ is an internal
direct sum of $p$ pairwise non-isomorphic 1-dimensional
$H$-submodules. (If $V_0,V_1,\dots,V_{p-1}$ were all
isomorphic, then $H$ would contain non-trivial scalar matrices contrary to Lemma~\ref{ESQstructure}(c).)

(a) Suppose that $G$ does not act absolutely irreducibly on $V$.
By Lemma~\ref{ESQstructure}(b) we may view $G$ as an ESQ-subgroup of
$\GL_p(\E)$ where $\E$ denotes the algebraic closure of $\F_q$. The
module $\E^p$ is a direct sum of $p$ pairwise non-isomorphic
algebraically conjugate irreducible 1-dimensional $G$-submodules
by \cite[Theorem~VII.1.16]{HB}. This proves that $G$ is abelian. We argue
that $r:=|G|$ 
is prime. If not, then $G$ has a proper non-trivial subgroup $H$. By
the first paragraph of the proof, $V$ decomposes as the sum of 
$1$-dimensional $H$-modules. In particular, an element 
$h\in H$ has an eigenvalue, 
$\lambda$, say, in $V$. Since $h$
commutes with $G$, the linear transformation $h$ is a $G$-endomorphism
of $V$, and so are the transformations $\lambda I$ and $h-\lambda I$. 
As $h-\lambda I$ is not invertible, Schur's lemma shows that $h-\lambda I=0$, 
and so $h$ coincides with the scalar matrix $\lambda I$.
However, as, by Lemma~\ref{ESQstructure}(c), $G$ contains no non-trivial scalar matrices,  we obtain that $h=I$. 
Hence the only proper 
subgroup of $G$ is the trivial subgroup,  which shows that $r$ is prime.

Then $\E^p=W\oplus\sigma(W)\oplus\cdots\oplus\sigma^{p-1}(W)$
where $\sigma\colon\E\to\E$ is the $q$th power (Frobenius)
automorphism, and $\sigma(W)$ denotes an $\E G$-module algebraically
conjugate to $W$ via $\sigma$ \cite[Definition~VII.1.13]{HB}.
Further, $\sigma^p(W)\cong W$. The exterior
square of $\E^p$ is isomorphic to a direct sum
$\sum_{0\leq i<j<p} \sigma^i(W)\wedge\sigma^j(W)$. 
Thus $W\cong\sigma^i(W)\wedge\sigma^j(W)$ for some $0\leq i<j<p$, as
$\E^p$ is an ESQ-module. Suppose that $G=\langle
g\rangle$. Then $g$ is conjugate in $\GL_p(\E)$ to a diagonal matrix
$\diag(\zeta,\zeta^q, \dots,\zeta^{q^{p-1}})$ where
$\zeta\in\E^\times$ has order $r$. It follows from $q^p\equiv 1\pmod
r$, that $\ord_r(q)$ equals 1 or $p$. The first possibility does not
arise as $g$ is not a scalar matrix. The
condition $W\cong\zeta^i(W)\wedge\zeta^j(W)$ implies that
$1\equiv q^i+q^j\pmod r$ where $\alpha=q^i$ and $\beta=q^j$ are distinct
powers of $q$. This completes the proof of part (a).

(b) Suppose now that $G$ is a simple group acting absolutely irreducibly
on $V$. Then $G$ must be non-abelian.

(c) Suppose now that $G$ acts absolutely irreducibly on $V$ and $G$ is
not simple. Let $N$ be minimal normal subgroup of $G$.
Then $N$ is a proper subgroup of $G$ and so acts reducibly.
Let $V=V_0\oplus\cdots\oplus V_{p-1}$ be a direct sum
of $p$ irreducible 1-dimensional $N$-submodules. 
It follows that $N$ is abelian. Suppose that $|N|=r^s$ where $r$ is prime.
By the first
paragraph of this proof, $V_0,\dots,V_{p-1}$ are pairwise non-isomorphic.
Since $N$ has a non-trivial 1-dimensional module over $\F_q$, it
follows that $r$ divides $q-1$, or $\ord_r(q)=1$.

By Clifford's theorem, $G$ acts transitively on the set
$\{V_0,\dots,V_{p-1}\}$. Choose $g\in G$ that induces a $p$-cycle. By
renumbering if necessary, assume that $V_i g=V_{i+1}$ where the
subscripts are read modulo $p$. Choose $0\ne e_0\in V_0$ and set
$e_i=e_0g^i$. Then $e_0g^p=\lambda e_0$ for some
$\lambda\in\F_q$. Since $g^p$ is the scalar matrix $\lambda I$, it
follows from Lemma~\ref{ESQstructure}(c) that $\lambda=1$. Since
$\langle g\rangle N$ acts irreducibly on $V$, it follows by
minimality that $G=\langle g\rangle N$ has order $pr^s$.

We may view $G$ as a subgroup of the wreath product $C_r\Wr C_p$. The
base group $(C_r)^p$ may be identified with the vector space
$(\F_r)^p$, and $N$ may be identified with an irreducible
$\F_r C_p$-submodule of $(\F_r)^p$. Since the derived subgroup
$G'$ equals $N$, it follows that $r\ne p$. By
Maschke's theorem $(\F_r)^p$ is a completely reducible
$\F_r C_p$-module. Since no $V_i$ is the trivial module, $N$
corresponds to an irreducible $\F_r C_p$-submodule of $(\F_r)^{p-1}$.
However, $(\F_r)^{p-1}$ is the direct sum of $(p-1)/\ord_p(r)$
irreducible $\F_r C_p$-submodules each of dimension $\ord_p(r)$.
This proves that $s=\ord_p(r)$, and completes the proof.
\end{proof}

In Theorem~\ref{d=p}(a,c), the order $|N|=r^s$ of a minimal normal
subgroup of $G$ is severely restricted. If $p$ and $q$ are given, then
$r^s$ must divide $|\GL_p(\F_q)|$, however, if $p$ is given and $q$
is arbitrary, then there are still finitely many choices for $r^s$. 

\begin{theorem}\label{d=pESQ}
Let $G\leq\GL_p(q)$ be as in Theorem~$\ref{d=p}$\textup{(a,c)}, and let $N$ be
a minimal normal subgroup of $G$. Let $j\in\{2,3,\dots,p-1\}$, and let
$Y_{p,j}$ be the $\Z$-module
\[
  Y_{p,j}=\langle e_0,e_1,\dots,e_{p-1}\mid
    e_k=e_{1+k}+e_{j+k},\quad k=0,1,\dots,p-1\rangle
\]
where the subscripts are read modulo $p$. Then 
the $\Z$-modules $Y_{p,j}$ are finite, and $N$ is a factor group
of $Y_{p,j}$ for some $j$. 
\end{theorem}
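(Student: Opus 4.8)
The plan is to extract, at the level of the minimal normal subgroup $N$, the ``Fibonacci-type'' relation that the ESQ condition forces, and then to recognise it as exactly the defining relation of $Y_{p,j}$ after a cyclic relabelling of generators. Recall the situation set up inside the proof of Theorem~\ref{d=p}: in both cases \textup{(a)} and \textup{(c)}, $N$ is abelian and there is a direct sum decomposition $V=V_0\oplus\cdots\oplus V_{p-1}$ (over $\F_q$ in case~\textup{(c)}, over the algebraic closure $\E$ of $\F_q$ in case~\textup{(a)}) into pairwise non-isomorphic, non-trivial one-dimensional $N$-modules affording characters $\psi_0,\dots,\psi_{p-1}$, together with an element $g$ --- a suitable element of $G$ in case~\textup{(c)}, the Frobenius automorphism $\sigma$ in case~\textup{(a)} --- inducing the $p$-cycle $V_i\mapsto V_{i+1}$ (subscripts read mod $p$), equivalently $\psi_i^{g}=\psi_{i+1}$. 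Since $r\neq\operatorname{char}(\F_q)$ in both cases (in~\textup{(c)}, $r\mid q-1$; in~\textup{(a)}, $\ord_r(q)=p$), the group $N$ has order prime to $\operatorname{char}(\F_q)$, so the $\psi_i$ may be regarded as elements of the character group $\hat N$, and $\bigcap_i\ker\psi_i=1$ (faithfulness of $V$) forces $\psi_0,\dots,\psi_{p-1}$ to generate $\hat N$.

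First I would read off the relation. Restricting the ESQ-isomorphism $\Lambda^2 V/U\cong V$ to $N$ (legitimate since $U$ is an $N$-submodule) shows each $\psi_k$ is a constituent of $(\Lambda^2 V)|_N=\bigoplus_{0\le i<j<p}\psi_i\psi_j$, hence $\psi_k=\psi_i\psi_j$ for some $i<j$. The cyclic group $\langle g\rangle$ acts on the set of two-element subsets $\{i,j\}$ of $\Z/p$ by simultaneous shift, and applying $(\cdot)^g$ carries the relation ``$\psi_k=\psi_i\psi_j$'' to ``$\psi_{k+1}=\psi_{i+1}\psi_{j+1}$''; so if $\psi_0=\psi_a\psi_b$ with $a<b$, then applying powers of $g$ yields
\[
  \psi_t=\psi_{a+t}\,\psi_{b+t}\qquad\text{for all }t\in\Z/p .
\]
Here $a\neq b$, and neither is $0$ (were $a=0$, then $\psi_b$ would be the trivial character). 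Now relabel: writing $\hat N$ additively and putting $\theta_i:=\psi_{ai}$, the map $i\mapsto ai$ is a bijection of $\Z/p$ since $p$ is prime and $a\neq0$, so $\{\theta_0,\dots,\theta_{p-1}\}=\{\psi_0,\dots,\psi_{p-1}\}$ and these elements still generate $\hat N$; and evaluating the displayed relation at $t=ai$ turns it into $\theta_i=\theta_{i+1}+\theta_{i+j}$ with $j:=a^{-1}b\bmod p$, where $j\neq0$ (as $b\neq0$) and $j\neq1$ (as $b\neq a$), i.e.\ $j\in\{2,\dots,p-1\}$. Thus $e_i\mapsto\theta_i$ defines a surjection $Y_{p,j}\onto\hat N$, and since $\hat N\cong N$ as abelian groups, $N$ is a factor group of $Y_{p,j}$.

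It remains to show $Y_{p,j}$ is finite for every $j\in\{2,\dots,p-1\}$. Presenting $Y_{p,j}$ as the cokernel of the integer matrix $I-P-P^{j}$ on $\Z^p$, where $P$ is the cyclic shift, finiteness is equivalent to $\det(I-P-P^{j})=\prod_{\omega^{p}=1}(1-\omega-\omega^{j})\neq0$. For $\omega=1$ the factor equals $-1$. If $\omega$ is a primitive $p$-th root of unity with $\omega+\omega^{j}=1$, then $\omega$ and $\omega^{j}$ are complex numbers of modulus $1$ whose sum is the real number $1$, which forces $\omega^{j}=\overline{\omega}$ and $\operatorname{Re}\omega=\tfrac12$, so $\omega$ is a primitive sixth root of unity --- impossible since $\omega^{p}=1$ with $p$ an odd prime. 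Hence every factor is non-zero and $Y_{p,j}$ is finite.

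I expect the main obstacle to be the bookkeeping in the middle step: one must argue that a \emph{single} $\langle g\rangle$-orbit of pairs $\{a+t,b+t\}$ realises $\psi_t=\psi_{a+t}\psi_{b+t}$ uniformly in $t$, rather than a possibly $t$-dependent choice $\psi_t=\psi_{i(t)}\psi_{j(t)}$, and one must run cases~\textup{(a)} and~\textup{(c)} in parallel even though ``$g$'' denotes an element of $G$ in one case and the Frobenius automorphism in the other --- for case~\textup{(a)} this uses that the ESQ-isomorphism is defined over $\F_q$ and so commutes with $\sigma$ after extending scalars to $\E$. The finiteness computation and the identification $N\cong\hat N$ are routine by comparison.
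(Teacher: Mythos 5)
Your proposal is correct and follows essentially the same route as the paper: extract the character relation $\psi_k=\psi_{1+k}\psi_{j+k}$ (after excluding index $0$ and renormalising the cyclic labelling, which the paper does by replacing $g$ by $g^i$ and you do by the reindexing $\theta_i=\psi_{ai}$), conclude that the dual group $N^*\cong N$ is an epimorphic image of $Y_{p,j}\cong\Z^p/\Z^p(I-C-C^j)$, and prove finiteness via $\det(I-C-C^j)\neq0$. The only cosmetic differences are that the paper reduces case (a) to case (c) by adjoining an auxiliary matrix $g$ of order $p$ over $\F_q(\zeta)$ rather than letting the Frobenius play the role of $g$, and its primary finiteness argument is the reduction $\det(I-C-C^j)\equiv(-1)^p\pmod p$, with your roots-of-unity computation appearing there only as the stated alternative.
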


\begin{proof}
Cases (a) and (c) can be unified by considering the potentially larger
finite field $\E=\F_q(\zeta)$ where $\zeta$ has order $r$. In case (a),
the group $G=N$ has order prime $r$, and 
$
\E^p=V_0\oplus
V_1\oplus\cdots\oplus V_{p-1}
$ 
where $V_k$ is an irreducible
1-dimensional $\E N$-module corresponding to the eigenvalue 
$\sigma^k(\zeta)$ where $\sigma\in\Gal(\E/\F_q)$ has order~$p$. Let
$g\in\GL_p(\E)$ be a matrix of order $p$ satisfying
$V_k=V_0g^k$, for all $k$. Then $\langle g\rangle N$ is a minimal irreducible
ESQ-subgroup of $\GL_p(\E)$ with $s=1$.

Thus we reduce to case (c) where $G=\langle g\rangle N$ is a minimal
irreducible ESQ-subgroup of $\GL_p(\E)$ where $g$ is as above,
$|N|=r^s$ is elementary abelian and $\E^p=V_0\oplus
V_1\oplus\cdots\oplus V_{p-1}$ is a sum of irreducible (but not
necessarily algebraically conjugate) 1-dimensional $\E N$-submodules.
As $V_k=V_0g^k$, each $V_k$ is a non-trivial $N$-module.
If $V_0\cong V_0\wedge V_j$, then $V_j$ would be the
trivial $N$-module. Hence we have $V_0\cong V_i\w V_j$ where $0<i<j<p$.

By replacing $g$ by $g^i$, we may assume that $V_0\cong V_1\w V_j$, that is,
we may assume that $i=1$.
Suppose henceforth that $V_0\cong V_1\wedge V_j$ as $N$-modules
where $1<j<p$ and $V_k=V_0g^k$. Suppose $n\in N$ and
$n=\diag(\zeta^{x_0(n)},\zeta^{x_1(n)},\dots,\zeta^{x_{p-1}(n)})$
where $x_k(n)\in\F_r$ and $\zeta\in\E$ has order~$r$. The $N$-isomorphisms
$V_k\cong V_{1+k}\wedge V_{j+k}$ give rise to equations
$x_k(n)=x_{1+k}(n)+x_{j+k}(n)$ in $\F_r$, where the subscripts are
read modulo $p$. We shall view $x_k$ as an element of the
dual space $N^*$ of $N$. Thus $x_k=x_{1+k}+x_{j+k}$ in $N^*$ for all~$k$.

Our goal now is to relate the abelian group $Y_{p,j}$, which is
defined in terms of $p$ and~$j$, to the abelian group $N$ of order $r^s$.
We show now that $N^*$ is an epimorphic image
of $Y_{p,j}$. Let $\Z^p=\langle e_0,\dots,e_{p-1}\rangle$
where $e_k=(0,\dots,0,1,0,\dots,0)$ has a~1 in position~$k+1$.
First, note that $N^*=\langle x_0,x_1,\dots,x_{p-1}\rangle$
because $\cap_{k=0}^{p-1} \ker(x_k)$ is trivial. Second, the homomorphism
$\Z^p\to N^*$ defined by $\sum_{k=0}^{p-1} y_ke_k\mapsto
\sum_{k=0}^{p-1} y_kx_k$ is surjective by the previous sentence, 
and its kernel contains
$R_j=\langle e_k-e_{1+k}-e_{j+k}\mid k=0,1,\dots,p-1\rangle$
since $x_k-x_{1+k}-x_{j+k}=0$ in $N^*$ for all $k$. As $Y_{p,j}\cong\Z^p/R_j$,
this homomorphism induces an epimorphism $Y_{p,j}\to N^*$.
We prove below that $Y_{p,j}$ is finite.
Hence $|N^*|=|N|=r^s$ divides~$|Y_{p,j}|$. 

The above subgroup $R_j$ of $\Z^p$ equals $\Z^p(I-C-C^j)$ where
$C=C_p$ denotes the cyclic permutation matrix
\[
  C_p=\begin{pmatrix}0&1& &0\\ & &\ddots& \\0&0& &1\\1&0&&0\end{pmatrix}
  \in\GL_p(\Z).
\] 
We argue that
$|\Z^p:R_j|$ is finite, or equivalently that $\det(I-C-C^j)\ne0$. 
Denote by $\,\overline{\phantom{w}}\,$ the matrix ring homomorphism
$\,\overline{\phantom{w}}\colon\Z^{\,p\times p}\to\F_p^{\,p\times p}$.
Then $\overline{C}$ is conjugate in $\GL_p(\F_p)$ to an
upper-triangular matrix with all eigenvalues $1$, and
$\overline{I-C-C^j}$ is conjugate to an upper-triangular matrix with
all eigenvalues $-1$. Therefore
\[
  \det(I-C-C^j)\equiv (-1)^p \pmod p.
\]
Thus $\det(I-C-C^j)\ne0$ and $|Y_{p,j}|=|\det(I-C_p-C_p^j)|$ is finite.
Alternatively, the formula for the determinant of a
circulant matrix shows that
\[
  \det(I-C_p-C_p^j)=\prod_{k=0}^{p-1}(1-\xi_p^k-\xi_p^{jk})\ne 0
\]
for $1<j<p$, where $\xi_p=e^{2\pi i/p}$ denotes a complex primitive
$p$th root of 1.
\end{proof}

The previous determinant can be defined for any size $n$. 
Let $\delta_{n,j} = \det(I-C_n-C_n^j)$ for $1<j<n$.
However, for some composite $n$, this determinant can be zero.
For example, if $n\equiv 0\pmod 6$ and $j\equiv -1\pmod 6$,
then $\delta_{n,j} = 0$ as $1-\xi_n^{n/6}-\xi_n^{-n/6}= 0$. We can observe that
$\delta_{n,n-1} =\pm1$ whenever $n\equiv\pm 1\pmod 6$.
To see this note that
$(I-C_n-C_n^{-1})^{-1}=I+\sum_{i=1}^{(n-2)/3}(-C_n)^{3i}(I+C_n^{n-1})$
when $n\equiv-1\pmod 6$. A similar formula holds for the inverse
when $n\equiv1\pmod 6$. Furthermore, one can prove using row operations, and
basic properties of Fibonacci numbers that
$\delta_{n,2}=1+(-1)^n-F_{n-1}-F_{n+1}$.
Assume henceforth that $n=p$ is prime.

The structure of the abelian group $Y_{p,j}$ can be determined from the
elementary divisors of the Smith normal form of the matrix $I-C_p-C_p^j$. 
The following table shows, for example, that
$Y_{7,3}\cong Y_{7,5}\cong (C_2)^3$ has order 8 and exponent 2, and
$Y_{13,12}$ is trivial.

\vskip3mm
\def\vstr{\phantom{\kern-20pt$\sum_{i_j}^{k^l}$}}
\def\k{\kern-1.3pt}
\begin{center}
\begin{tabular}{|c|c|c|c|c|c|c|c|c|c|c|c|c|c|c|c|c|c|}\hline
$p$&3&5&\k5\k&7&7&\k7\k&11&11&11&\k11\k&13&13&13&13&13&\k13\k\\ \hline
$j$&2&2,3&\k4\k&2,4&3,5&\k6\k&2,6&3,4&5,7,8,9&\k10\k&2,7&3,9&4,10&5,8&6,11&\k12\k\\ \hline
\text{Divisors}\vstr&$2^2$&11& &29&$2^3$& &199&67&23& &521&131&79&$3^3$&53&\\ \hline
\end{tabular}
\nopagebreak[4]\vskip2mm
Table 1: The elementary divisors of $Y_{p,j}$ that are not 1.
\end{center}

Table~1 suggests that $Y_{p,j}\cong Y_{p,k}$ when $jk\equiv1\pmod p$.
Note that there exists a
permutation matrix which conjugates $C_p$ to $C_p^k$, and this conjugates
$I-C_p-C_p^j$ to $I-C_p^k-C_p$. Hence $Y_{p,j}\cong Y_{p,k}$.

The following theorem shows that case (b) of Theorem~\ref{d=p}
does not arise in dimension~$5$. 

\begin{theorem}\label{d=5}
Let $q$ be a prime power, and let $G$ be a minimal irreducible
ESQ-subgroup of $\GL_5(q)$. Then case~\textup{(b)} of Theorem~$\ref{d=p}$
does not arise, and more can be said about cases~\textup{(a)}
and ~\textup{(c):}
\begin{itemize}
\item[(a)] $G$ is not absolutely irreducible, $|G|=11$, and $\ord_{11}(q)=5$,
\item[(c)] $G$ is absolutely irreducible of order $55$ and $\ord_{11}(q)=1$.
\end{itemize} 
Furthermore, both of these possibilities occur.
\end{theorem}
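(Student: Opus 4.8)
The plan is to invoke the trichotomy of Theorem~\ref{d=p} with $p=5$ and then to tighten each of its three branches. Cases~(a) and~(c) will follow quickly from Theorem~\ref{d=pESQ} together with the computation of the groups $Y_{5,j}$, while the real work lies in showing that case~(b) cannot occur in dimension~$5$; once that is done, the final assertion requires exhibiting an explicit example for each of (a) and (c).

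\emph{Cases (a) and (c).} Let $N$ be a minimal normal subgroup of $G$; in case~(a) we have $N=G$ cyclic of prime order, and in both cases $N\neq1$. By Theorem~\ref{d=pESQ} with $p=5$, the group $N$ is a homomorphic image of $Y_{5,j}$ for some $j\in\{2,3,4\}$. Now $Y_{5,4}$ is trivial (since $5\equiv-1\pmod6$, one has $\delta_{5,4}=\pm1$), while $Y_{5,2}\cong Y_{5,3}\cong C_{11}$ by Table~1 (the isomorphism $Y_{5,2}\cong Y_{5,3}$ also following from $2\cdot3\equiv1\pmod5$). Hence $N$, being a nontrivial quotient of $C_{11}$, is isomorphic to $C_{11}$, so $r^s=11$ and therefore $r=11$, $s=1$. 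In case~(a) this gives $|G|=r=11$ and $\ord_{11}(q)=p=5$, and in case~(c) it gives $|G|=p\,r^s=55$ and $\ord_{11}(q)=\ord_r(q)=1$. These are exactly the refinements asserted in~(a) and~(c).

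\emph{Case (b) does not arise.} Suppose $G$ is a non-abelian simple group acting absolutely irreducibly on $V=(\F_q)^5$; by minimality every proper subgroup of $G$ is reducible on $V$. I would appeal to the classification of $5$-dimensional absolutely irreducible representations of quasi-simple groups (as tabulated by Hiss and Malle, building on work of Brauer, Wagner and others): the relevant groups form a short explicit list, comprising $A_5\cong\PSL_2(4)\cong\PSL_2(5)$, $A_6$, $\PSL_2(11)$, $M_{11}$ in characteristic~$3$, $\mathrm{PSp}_4(3)\cong\PSU_4(2)$ in characteristic~$3$, together with groups of Lie type acting in their defining characteristic through a $5$-dimensional highest-weight module. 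For each candidate pair $(G,V)$ one checks that $V$ is not a composition factor of $\Lambda^2 V$ --- via ordinary and Brauer characters in the cross-characteristic cases, and via highest-weight and Steinberg-tensor-product considerations in the defining-characteristic cases --- and since a quotient of $\Lambda^2 V$ is in particular a composition factor, $V$ is never an ESQ-module. This case analysis is the main obstacle; cases~(a) and~(c) are essentially bookkeeping once Theorem~\ref{d=pESQ} is available.

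\emph{Both possibilities occur.} For~(a), take $q=3$, so $\ord_{11}(3)=5$; fix a primitive $11$th root of unity $\zeta\in\F_{3^5}$ and let $G=\langle n\rangle\cong C_{11}$ act on $V=(\F_3)^5$ with $n$ having eigenvalues $\zeta^e$ for $e$ in the $\langle3\rangle$-orbit $\{1,3,9,5,4\}\subseteq\F_{11}^\times$. Then $V$ is an irreducible, but not absolutely irreducible, $\F_3G$-module, and since the ten pairwise sums $e+e'$ of these exponents run over all of $\{1,2,\dots,10\}\pmod{11}$, we obtain $\Lambda^2 V\cong V\oplus V'$, where $V'$ is the $5$-dimensional module on the complementary orbit $\{2,6,7,10,8\}$. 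Thus $\Lambda^2 V/V'\cong V$, and as the only proper subgroup of $C_{11}$ is trivial, $G$ is a minimal irreducible ESQ-subgroup of $\GL_5(3)$. For~(c), take a prime power $q\equiv1\pmod{11}$, say $q=23$, and $\zeta\in\F_q$ of order~$11$. Choose $j=2$ and a nonzero solution $(x_0,\dots,x_4)\in(\F_{11})^5$ of the recurrence $x_k=x_{k+1}+x_{k+2}$ (indices modulo~$5$); such a solution exists because $Y_{5,2}\cong C_{11}$, and a short computation shows its entries are pairwise distinct. Put $N=\langle\diag(\zeta^{x_0},\dots,\zeta^{x_4})\rangle\cong C_{11}$, let $g\in\GL_5(q)$ be the permutation matrix cyclically shifting the coordinate basis, and set $G=\langle g,N\rangle$, so $|G|=55$. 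The relations $x_k=x_{k+1}+x_{k+2}$ say precisely that $V_k\cong V_{k+1}\wedge V_{k+2}$ as $N$-modules, so that $V\cong\bigoplus_k V_k$ is a direct summand of $\Lambda^2 V=\bigoplus_{i<j}V_i\wedge V_j$; hence $V$ is ESQ. Clifford's theorem shows $V$ is irreducible, its endomorphism algebra equals $\F_q$ so $V$ is absolutely irreducible, and the proper subgroups of $G$ (of orders $1$, $5$, $11$) are all reducible on $V$. Therefore $G$ is a minimal irreducible ESQ-subgroup of $\GL_5(q)$ of order~$55$, completing the verification that both possibilities occur.
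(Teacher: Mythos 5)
Your handling of cases (a) and (c) and your two explicit constructions are essentially sound, and in fact close to the paper's own route: the paper also reads $r=11$, $s=1$ off Theorem~\ref{d=pESQ} together with Table~1, realizes case (a) by the companion matrix of an irreducible quintic factor of $\Phi_{11}(x)$ over $\F_q$, and realizes case (c) inside $\AGL_1(11)$ (via Theorem~\ref{AGammaL}, for \emph{every} $q$ with $q^5\equiv1\pmod{11}$, which is the version actually invoked later for Theorem~\ref{main}(e)(ii); the literal statement ``both possibilities occur'' is met by your specific examples). Two loose ends in your order-$55$ example are worth closing: you need $g$ to normalize $N$ before concluding $|G|=55$ (it does, because the solution space of $x_k=x_{k+1}+x_{k+2}$ in $\F_{11}^5$ is one-dimensional and shift-invariant, e.g. $(1,3,9,5,4)$ shifts to $3\cdot(1,3,9,5,4)$), and the isomorphism $\Lambda^2 V/U\cong V$ must be one of $G$-modules, not merely $N$-modules; this follows from Clifford theory, since both sides are induced from isomorphic one-dimensional $N$-modules whose inertia subgroup is $N$, but your ``direct summand'' sentence only checks the $N$-action.

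The genuine gap is case (b), which you correctly identify as the main obstacle and then do not carry out. ``I would appeal to the classification \dots\ one checks that $V$ is not a composition factor of $\Lambda^2 V$'' is a plan, not a proof. The candidate list contains infinite families in defining characteristic ($\PSL_5(p^\ell)$, $\PSU_5(p^{2\ell})$, $\POmega_5(p^\ell)$, $\PSL_2(p^\ell)$ acting on $L(4)$), for which the asserted check is neither uniform nor automatic: for instance $\Lambda^2$ of the natural $\Omega_5$-module is the $10$-dimensional adjoint module, whose modular composition factors depend on $p$, and $\Lambda^2 L(4)$ for $\PSL_2$ likewise requires a genuine modular computation; moreover your explicit list omits at least $A_7$ in characteristic $7$ (where the deleted permutation module is $5$-dimensional) and the characteristic-$2$ groups $\PSL_5(2^\ell)$, $\PSU_5(4^\ell)$. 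The paper deliberately avoids this open-ended case-by-case analysis: in odd characteristic it observes, using the Di\,Martino--Wagner classification, that every candidate contains a copy of $A_4$, and then proves by elementary means that $A_4$ admits no faithful $5$-dimensional ESQ-module; since the ESQ property passes to subgroups (Lemma~\ref{ESQstructure}(a)), this eliminates all candidates simultaneously, including the infinite families. In characteristic $2$, minimality reduces Wagner's list to $\PSL_2(11)$ and $\PSL_5(2)$, whose four $5$-dimensional modules have irreducible exterior squares by the Atlas. Until you either perform the composition-factor verification for every candidate (including all defining-characteristic families and even characteristic) or substitute a uniform argument such as the $A_4$-restriction, the nonexistence claim in case (b) --- and hence the theorem --- is not established.
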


\begin{proof}
Suppose that $G$ satisfies case (a) of Theorem~\ref{d=p}. Then $r=11$
by Theorem~\ref{d=pESQ} and Table~1. The subgroup $\langle
q\rangle\leq\F_{11}^\times$ has order $p=5$. Therefore
$\langle q\rangle=\{1,3,4,5,9\}=(\F_{11}^\times)^2$, 
and $\ord_{11}(q)=5$ implies that
$q\equiv 3,4,5,9\pmod{11}$. Note that $\alpha=3$
and $\beta=9$ satisfy $\alpha+\beta=1$ in $\F_{11}$. Conversely, if
$\ord_{11}(q)=5$, then the cyclotomic polynomial
$\Phi_{11}(x)=x^{10}+\cdots+x+1$ factors over $\F_q$ as a product of
two distinct irreducible quintics. The companion matrix of either of these
quintics generates an irreducible (but not absolutely irreducible)
ESQ-subgroup of $\GL_5(\F_q)$ of order $11$. 

Suppose now that $G$ satisfies case~(c) of Theorem~\ref{d=p}. As
above, $r=11$. Furthermore, $\ord_{11}(q)=1$ and
$s=\ord_5(11)=1$. Therefore $G$ is isomorphic to 
the group $\langle g,n\mid g^5=n^{11}=1,g^{-1}ng=n^t\rangle$ of order
55, where $\ord_{11}(t)=5$. By replacing $g$ by a power of itself, we
may assume that $t=3$.
Conversely, there is an irreducible ESQ-subgroup $G$ of $\GL_5(\F_q)$ of
order $55$ if $q^5\equiv1\pmod{11}$. To see this apply
Theorem~\ref{AGammaL} below with $G=G_L$ where
$L=(\F_{11}^\times)^2=\{1,3,4,5,9\}$. Note that
$\alpha=3, \beta=9$ satisfy $\alpha+\beta=1$ in $\F_{11}$, and the
condition $q\in L$ is equivalent to $q^5\equiv1\pmod{11}$. 
Here $G_L$ is a {\em minimal} ESQ-subgroup if an only if
$q\equiv1\pmod{11}$. 

Suppose now that case~(b) of Theorem~\ref{d=p} holds, and $G$
is a non-abelian simple (absolutely) irreducible ESQ-subgroup
of $\GL_5(q)$ where
$q=p^k$ and $\textup{char}(\F_q)=p$. As $G$ is non-abelian simple, we have
$G\leq\SL_5(q)$ and $G\cap Z(\SL_5(q))=1$. Thus $G$ is isomorphic to
an irreducible subgroup of $\PSL_5(q)$. Consider first the case when $p=2$.
The irreducible subgroups of $\PSL_5(2^k)$ were classified by
Wagner~\cite{Wag78}. As $G$ is simple, it must be isomorphic to one of the
following groups: $\PSL_2(11)$, $\PSL_5(2^\ell)$ where $\ell\,|\, k$, or
$\PSU_5(4^\ell)$ where $2\ell\,|\, k$. Furthermore, each of these possibilities
give irreducible subgroups of $\PSL_5(2^k)$. We shall use
Lemma~\ref{ESQstructure}(a) to show that none of these possibilities
give irreducible ESQ-subgroups of $\GL_5(q)$. Since
$\PSL_5(2)\leq\PSL_5(2^\ell)$ and $\PSL_2(11)\leq\PSU_5(4^\ell)$ for
all~$\ell$, minimality implies that $G$ is an irreducible (and hence
absolutely irreducible) group isomorphic to
$\PSL_2(11)$ or $\PSL_5(2)$. The Atlas~\cite{Atlas} lists (up to isomorphism)
the irreducible 5-dimensional modules for $\PSL_2(11)$ and $\PSL_5(2)$
in characteristic~2. There are four: two for $\PSL_2(11)$ over $\F_4$, and
two for $\PSL_5(2)$ over $\F_2$. Straightforward computation shows that
the exterior square of each of these is irreducible. Thus no ESQ-groups
arise when $p=2$.

Suppose now that $p>2$. Here we use the classification of irreducible
subgroups of $\PSL_5(p^k)$ in~\cite{DW79}. To prove that there are no
non-abelian simple irreducible ESQ-subgroups of $\PSL_5(p^k)$ (really
$\GL_5(p^k)$), it will be convenient by Lemma~\ref{ESQstructure}(b)
to choose $q=p^k$ to be ``sufficiently large.'' It follows from~\cite{DW79}
that $G$ is isomorphic to one of the following: $\PSL_5(p^\ell)$,
$\PSU_5(p^{2\ell})$, $\POmega_5(p^\ell)$, $\POmega_5(3)$, $\PSL_2(p^\ell)$,
$A_5$, $A_6$, $\PSL_2(11)$, $A_7$, $\textup{M}_{11}$. Each of these 
groups contains
a subgroup isomorphic to the alternating group $A_4$. Indeed,
$\PSL_2(p^\ell)$ contains such a subgroup by~\cite[Satz~II.8.18]{huppert}, 
and we also have
\[
  A_5\leq\PSL_2(11)\leq \textup{M}_{11}, A_5\leq\POmega_5(p)\leq\PSL_5(p^\ell),
  \POmega_5(p)\leq\PSU_5(p^{2\ell}), \textup{and } A_5\leq\POmega_5(3).
\]
As $A_4$ is a subgroup of $A_5$, the claim is valid.

Next we show that $A_4$ does not have a $5$-dimensional faithful ESQ-module. 
Suppose that $V$ is a faithful $5$-dimensional $A_4$-module over
$\F_q$. Suppose first that the characteristic of $\F_q$ is at least $5$, and
so the $A_4$-modules are completely reducible. If
$q$ is large enough, which we may assume by Lemma~\ref{ESQstructure}(b), 
then there are 4 pairwise non-isomorphic irreducible $A_4$-modules
over $\F_q$, three of which are 1-dimensional, and one is 3-dimensional. Since
$A_4$ is non-abelian, $V$ decomposes as $V=V_3+V_1+V'_1$ where the subscript
denotes the dimension. Thus $\Lambda^2 V=\Lambda^2(V_3+V_1+V'_1)$ contains
three 3-dimensional and a 1-dimensional direct summand, and so it is not
ESQ. If the characteristic of $\F_q$ is $3$, then there are only two
irreducible $A_4$-modules, one is 1-dimensional, and the other is
$3$-dimensional. 
Thus we may argue the same way as above, except we must use composition
factors instead of direct summands.
\end{proof}

Next we determine the 
ESQ-subgroups in $\GL_4(\F)$ for fields $\F$ of characteristic
different 
from 2. The characteristic 2 case would require additional
considerations and it is not relevant to
Theorem~\ref{main}(d). Note that apart from $\mathrm{char}(\F)\ne
2$ we allow $\F$ to be arbitrary,
not necessarily a prime field, and it can also have characteristic zero.

Let $L=\AGL_1(5)$ be the group of linear functions of the
5-element field considered as a permutation group of degree 5. 
We have $L=\langle a, b \rangle$, where $a=(0\,1\,2\,3\,4)$,
$b=(1\,2\,4\,3)$, and $|L|=20$. Then
$L$ naturally embeds into $\GL_5(\F)$ for any field $\F$. If the
characteristic of $\F$ is different from 5, then the underlying
module splits into a direct sum of submodules $\F^5=V\oplus V_1$, where
$V=\{(x_0,\dots,x_4) \mid x_0+\dots+x_4=0\}$ and
$V_1=\{(x,\dots,x)\mid x\in \F\}$. The action of $L$ on $V$ is
absolutely irreducible and it is the only faithful irreducible
representation of $L$ over $\F$. In the following theorem 
$L$ and $V$ will denote what have just been defined.

\begin{theorem} \label{d=4}
Let $\F$ be a field of characteristic different from $2$ and let 
$K\leq\GL_4(\F)$ be a finite irreducible ESQ-subgroup. Then 
$\mathrm{char}(\F)\ne5$, $K$ is
isomorphic to a subgroup of $L$ and the action of $K$ is isomorphic to
the restriction of the action of $L$ on $V$. Moreover, $5$ divides the
order of $K$, and if $5$ is a square in $\F$ then $K\cong L$.
\end{theorem}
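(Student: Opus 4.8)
The plan is to use the Klein correspondence to force $K$ into an affine (monomial) subgroup of $\GL_4(\F)$, and then to finish by a combinatorial argument of the same flavour as the $Y_{p,j}$-analysis of Theorem~\ref{d=pESQ}. Since $\dim V=4$, the space $\VwedgeV$ is $6$-dimensional and carries the non-degenerate quadratic form $Q\colon\omega\mapsto\omega\w\omega\in\Lambda^4V\cong\F$; it has plus type, is $\GL(V)$-invariant up to the scalar character $\det$, and its nonzero singular vectors are exactly the decomposable bivectors. Because $K$ is irreducible, the ESQ hypothesis supplies a $2$-dimensional $K$-submodule $U$ of $\VwedgeV$ with $(\VwedgeV)/U\cong V$. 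First I would show $U$ is non-degenerate: if $U\cap U^\perp\ne0$, then $U\subsetneq U+U^\perp\subsetneq\VwedgeV$ is a proper chain of $K$-submodules, so $(U+U^\perp)/U$ is a proper non-trivial $K$-submodule of $(\VwedgeV)/U\cong V$, contradicting irreducibility. Hence $\VwedgeV=U\oplus U^\perp$, $U^\perp\cong V$, and $Q':=Q|_{U^\perp}$ is a non-degenerate quadratic form on $V$ preserved by $K$ up to the character $\delta:=\det|_K$; comparing determinants in the resulting isomorphism $V\cong V^*\tensor\delta$ forces $\delta^2=1$. So, projectively, $K$ lies in the orthogonal similarity group of a non-degenerate $4$-dimensional quadratic form, whose Witt type is that of $U$.

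The core of the argument is to shrink this using the ESQ property a second time. If $U$ has plus type it contains exactly two singular lines, both decomposable; the corresponding $2$-subspaces $W_1,W_2$ of $V$ are permuted by $K$, and since $K$ fixes no $1$- or $2$-dimensional subspace of $V$ this gives $V=W_1\oplus W_2$ with $K$ interchanging the summands. Writing $K_0$ for the index-$2$ subgroup fixing each $W_i$ and using $\VwedgeV=\det W_1\oplus(W_1\tensor W_2)\oplus\det W_2$ (the middle summand $K$-stable, the outer pair $K$-swapped), one finds that the only $4$-dimensional quotient available is $W_1\tensor W_2$, so $V\cong W_1\tensor W_2$; but then the restriction $(W_1\tensor W_2)|_{K_0}=V|_{K_0}=W_1\oplus W_2$ is reducible, and a short look at the two-dimensional modules of the projections of $K_0$ forces $V$ to split into four $K$-permuted lines. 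If $U$ has minus type, then $Q'$ has minus type, $K$ maps (up to scalars) into $\GO_4^-(\F)$, whose projective image lies in $\mathrm{P}\GammaL_2(\F')$ for the quadratic extension $\F'$ attached to $Q'$; over $\overline\F$ one has $V\tensor\overline\F=\widetilde W\oplus\sigma\widetilde W$, with $\widetilde W$ the natural $2$-dimensional module of the $\SL_2$-part and $\sigma$ the relevant field automorphism, while $\VwedgeV\tensor\overline\F$ is the sum of a $2$-dimensional trivial module and the $4$-dimensional irreducible $\widetilde W\tensor\sigma\widetilde W$. As this irreducible cannot surject onto the reducible $\widetilde W\oplus\sigma\widetilde W$, and as an ESQ-group contains no non-trivial scalars, Dickson's classification of subgroups of $\PSL_2$ (a dimension count on exterior squares disposes of the $A_4,S_4,A_5$ cases) leaves only affine-type images, i.e.\ images normalising a cyclic subgroup. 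Thus in every case $K$ is of monomial type.

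So $K$ has a normal abelian subgroup $A$ (the diagonal part of the monomial structure over $\overline\F$) with $V=\mathrm{Ind}_A^K(\psi)$ for a linear character $\psi$, where $[K:A]=4$, $C_K(A)=A$, and $\overline K:=K/A$ acts transitively, as a subgroup of $S_4$, on the four pairwise distinct restrictions $\chi_1,\dots,\chi_4$ of $\psi$ to $A$. The ESQ condition, restricted to $A$, says that $\{\chi_1,\dots,\chi_4\}$ is contained in the multiset $\{\chi_i\chi_j\}$; when $\overline K$ is cyclic of order $4$ this unwinds (as for the $Y_{p,j}$-modules) to the defining relations of $Y_{4,j}$ for some $j\in\{2,3\}$, so $A$ is a quotient of $Y_{4,2}\cong C_5$ or of $Y_{4,3}\cong C_3$. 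Here $A\cong C_3$ is excluded because $C_3$ has only two non-trivial characters, $A=1$ is excluded because then $V$ would be a $4$-dimensional permutation module, hence reducible, and the non-cyclic transitive possibilities for $\overline K$ --- all of which contain a regular Klein four-group --- admit no equivariant solution of the multiset relation. Hence $A\cong C_5$, so $5\mid|K|$; and since an element of order $5$ must act on $V$ with four distinct primitive $5$th-root eigenvalues, $\mathrm{char}(\F)\ne5$. Now $\overline K$ embeds in $\Aut(C_5)\cong C_4$, so $\overline K=C_4$, $|K|=20$, $K\cong\AGL_1(5)$ as an abstract group, and unwinding the construction identifies the action of $K$ with the restriction to $K$ of the deleted-permutation action of $L$. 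Finally, which subgroups of $L$ act irreducibly over $\F$ depends on $\F$: the $4$-dimensional module of $L$ itself is always irreducible (its character is rational), whereas the faithful irreducible $\F$-modules of $D_{10}$ and of $C_5$ have dimension $4$ only when $\sqrt5\notin\F$ (equivalently $[\F(\zeta_5):\F]=4$), being $2$-dimensional otherwise. Hence $K\cong L$ precisely when $5$ is a square in $\F$.

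The step I expect to be the main obstacle is the second one: eliminating the large subgroups of the $4$-dimensional orthogonal similarity group --- in particular treating the imprimitive (plus-type) case cleanly, and carrying out the Dickson/$\PSL_2$ analysis of the minus-type case --- without lapsing into lengthy explicit matrix computations. A secondary technical nuisance is that, unlike irreducibility, the ESQ property is not inherited when one extends scalars to a field over which $V$ becomes absolutely irreducible; this is exactly why the proper subgroups $D_{10}$ and $C_5$ of $L$ appear, so the non-absolutely-irreducible modules must be carried along throughout rather than reduced away at the outset.
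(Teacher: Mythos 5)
Your opening moves are genuinely different from the paper's proof (which takes a minimal normal subgroup $M$, proves it abelian via a $\Hom(V,V^*)$/Schur intertwiner argument, solves the eigenvalue systems to force an element of order $5$ with all primitive fifth-root eigenvalues, shows $\langle g\rangle$ is self-centralizing, and embeds $K$ into the holomorph $L$), and the first of them is sound: the non-degeneracy of $U$ follows exactly as you say, and it does place $K$ inside the similitude group of a non-degenerate quaternary form. But the central reduction is broken. You pass to ``$K$ of monomial type'' and then work with $V=\mathrm{Ind}_A^K(\psi)$, $[K:A]=4$, $C_K(A)=A$, and $K/A$ a \emph{transitive} subgroup of $S_4$ permuting four pairwise distinct characters. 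This frame excludes precisely the groups the theorem must allow: when $5$ is a non-square in $\F$ the dihedral subgroup of order $10$ is an irreducible ESQ-subgroup (restriction of $L$), and when $[\F(\zeta_5):\F]=4$ so is $C_5$; neither group has a subgroup of index $4$, their modules are not induced from linear characters, and over $\overline{\F}$ the four eigenlines are permuted non-transitively (trivially, in the $C_5$ case) --- irreducibility over $\F$ comes from Galois conjugacy, not from transitivity. Your closing sentence concedes that these subgroups occur, so the combinatorial endgame as written cannot cover them; to repair it you must interleave the $K$-action with the Galois action (in the spirit of Theorem~\ref{d=pESQ}), or argue directly on the eigenvalues of a normal abelian subgroup, which is what the paper does.

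The two case eliminations are also asserted rather than proved. In the plus-type case, the claim that $W_1\tensor W_2\cong W_1\oplus W_2$ as $K_0$-modules ``forces $V$ to split into four $K$-permuted lines'' is false over $\F$ in the genuine example: for $K=L$ with $\sqrt5\in\F$, $\zeta_5\notin\F$, one has $K_0\cong D_5$, each $W_i$ is $\F$-irreducible, and the two $2$-dimensional characters of $D_5$ do satisfy $\chi_1\chi_2=\chi_1+\chi_2$; so the splitting holds only after extending scalars, and you are back to the descent problem above. In the minus-type case, ``Dickson's classification \dots leaves only affine-type images'' is not carried out: $\F$ is an arbitrary field of characteristic $\ne2$, so in positive characteristic $p$ the finite subgroups of $\mathrm{PGL}_2$ also include $\PSL_2(p^k)$ and $\mathrm{PGL}_2(p^k)$, which you never address, and the parenthetical ``dimension count on exterior squares'' eliminating $A_4$, $S_4$, $A_5$ is not given. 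The paper avoids all of this by killing every non-abelian candidate at once with the $\Hom(V,V^*)$ argument and the fact that no non-abelian finite simple group has a non-trivial $2$-dimensional representation in odd characteristic. Until these steps are supplied, your proposal is a plausible programme rather than a proof.
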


\begin{proof}
First recall \cite[Prop.~5.5.10]{kl} that no finite non-abelian simple group has a
non-trivial representation of degree two over a field of characteristic
different from 2.

Let $M$ be a minimal normal subgroup of $K$. We first show that $M$ is
abelian. If not, then $M$ is the direct product of pairwise isomorphic
non-abelian simple groups. Let $S$ be one of the simple
factors. Applying Clifford's Theorem twice for 
$S \vartriangleleft M \vartriangleleft K$,
and considering that $S$
has no two-dimensional non-trivial representation, we conclude that $S$
is irreducible. 
Let $V=(\F_q)^4$. Since $(\VwedgeV)/U \cong V$ for some
2-dimensional $S$-submodule $U$, the remark in the first paragraph in this
proof gives that  $S$  acts trivially on  $U$.  

Let $V^*$ denote the dual space of $V$. Let 
$\psi:\Hom(V,V^*)\rightarrow V\otimes V$ be  defined as follows.
Let $x_1,\ldots,x_4$ be a basis of $V$ and let $x_1^*,\ldots,x_4^*$ be the
dual basis of $V^*$. If $f\in\Hom(V,V^*)$ represented 
by the matrix $(\alpha_{i,j})$ with respect to these bases,
then let $\psi(f)$ be the element
$\sum_{i,j}\alpha_{i,j}x_i\otimes x_j$. It is easy to check that
$\psi$ is a linear isomorphism. Now the group $S$ acts on both 
spaces $\Hom(V,V^*)$ and $V\otimes V$: if $g\in S$, then the matrix of
$f^g$ is $g^\T (\alpha_{i,j})g$. An easy calculation shows that the isomorphism $\psi$ is
an isomorphism of $S$-modules, and so the fixed points of $S$ in $V\otimes V$ 
correspond to
intertwining operators between the $S$-modules $V$ and $V^*$.
Identify $\Lambda^2 V$ with
$\langle u\otimes v-v\otimes u\mid u,v\in V\rangle\subseteq V\otimes V$.
As $U$ is a 2-dimensional
subspace of $\Lambda^2 V$ on which $S$ acts trivially,
the dimension
of these intertwining operators is at least~2.
The dimension of the space of these intertwining
operators is equal to the dimension of the centralizer algebra of 
the $S$-module $V$. On the other hand, by Schur's lemma, the centralizer 
algebra is a quadratic extension field $\E$ of $\F_q$. 
Further, the $S$-module $V$ is also an $\E S$-module.
This means that 
$S$ can be viewed as a subgroup of $\GL_2(\E)$, which contradicts the first
paragraph of the proof.

So we have that $M$ is an elementary abelian $r$-group for some prime
$r$, which cannot be the characteristic of $\F$. Take an extension
field $\E \supseteq \F$ containing primitive $r$th roots of unity and
consider $M\leq\GL_4(\E)$, which, by Lemma~\ref{ESQstructure}(a,b), 
is an ESQ-group. Now $\E$ is a splitting field for $M$, so
we can fix 
an eigenbasis $e_1, e_2, e_3, e_4 \in \E^4$ of $M$.

Suppose that $M$ contains an element without fixed points, i.e., an
element $g\in M$ such that 1 is not an eigenvalue of $g$. Let the
eigenvalues of $g$ be $\lambda_i\in \E$ ($1\leq i\leq 4$). Then the
eigenvalues of $g \wedge g$ are $\lambda_j \lambda_k$ ($1\leq j<k\leq
4$). By the ESQ property there is an injective map 
$i\mapsto P(i)=\{j,k\}$
such that $\lambda_i=\lambda_j\lambda_k$. Since 1 is not among the
eigenvalues of $g$, we see that $i\notin P(i)$. Up to renumbering the
eigenvalues there are only two essentially different injective maps
satisfying this property. So we arrive at two alternative 
systems of equations:
\begin{equation}\label{eqs}
\lambda_1=\lambda_2 \lambda_3, \, \lambda_2=\lambda_3 \lambda_4, \,
\lambda_3=\lambda_4 \lambda_1, \, \lambda_4=\lambda_1 \lambda_2;
\end{equation}
and
$$
\lambda_1=\lambda_2 \lambda_3, \, \lambda_2=\lambda_1 \lambda_4, \,
\lambda_3=\lambda_1 \lambda_2, \, \lambda_4=\lambda_1 \lambda_3.
$$
It is easy to solve these systems of equations. In the first case we
obtain
$$
\lambda_1=\epsilon, \, \lambda_2=\epsilon^2, \,
\lambda_3=\epsilon^4, \, \lambda_4=\epsilon^3, 
$$
where $\epsilon^5=1$, and that implies $r=5$. 
In the second case the solutions have the form
$$
\lambda_1=\epsilon^2, \, \lambda_2=\epsilon^3, \,
\lambda_3=\epsilon^5, \, \lambda_4=\epsilon,
$$
where $\epsilon^6=1$. However, non-trivial elements of $M$ have prime
order $r$, hence either $\lambda_1=\epsilon^2=1$ or
$\lambda_2=\epsilon^3=1$, contrary to our assumption that 1 is not an
eigenvalue of $g$. So the only possibility is that such an element has
order 5 and its eigenvalues are all the four distinct primitive fifth
roots of unity.

Next we consider the case when 1 is an eigenvalue of every element
$g\in M$. We are going to show that this is not possible. 
Since $K$ is irreducible and $M\vartriangleleft K$, we have
$\{v\in \F^4\mid \forall g\in M:\, vg=v\}=0$. 
This implies that the trivial module is not a
direct summand in the $M$-module $\F^4$, and so it cannot be a direct
summand in $\E^4$. Hence 
$\{v\in \E^4\mid \forall g\in M:\, vg=v\}=0$. 
Now for every fixed $i$ ($1\leq i\leq 4$) the number of elements 
$g \in M$ with $e_ig=e_i$ equals $|M|/r$. 
If an element $g\in M$ has a fixed point in $\E^4$
(that is, 1 is an eigenvalue of $g$) then $g$ must fix one of the basis
vectors $e_i$. This shows, for $r>4$, that the number of elements in $M$
without eigenvalue $1$ is at least $|M|-4|M|/r>0$, which is not the case
now. Hence $r=2$ or $r=3$. If $M$ were cyclic, then all non-trivial
elements of $M$
would have non-trivial eigenvalues, hence $M$ is non-cyclic in
our present case. Then $M$ intersects $\SL_4(\F)$ non-trivially,
so by the minimality of $M$ we have that all matrices in $M$ have
determinant 1. 

Let $r=2$. 
Let $D$ denote the 8-element subgroup consisting
of diagonal matrices (with respect to the basis $e_1, e_2, e_3, e_4$)
with diagonal entries $\pm1$ and with determinant
1. We have $M\leq D$. By Lemma~\ref{ESQstructure}(c),
$M$ cannot contain $-I$. Now $D$ has seven maximal subgroups, 
out of these three contain $-I$ and the remaining
four are the stabilizers of the four basis
vectors $e_i$ ($1\leq i\leq 4$) in $D$. So 
there remains no possibility for $M$.

Let $r=3$, and denote by $\omega\in \E$ a primitive third root of
unity. Let $g\in M\leq\SL_4(\E)$ and suppose that $g\neq 1$.
Then 1 is an eigenvalue of $g$. If the
multiplicity of the eigenvalue 1 is one, then, as $\det g=1$, 
the eigenvalues of $g$
are $1,\omega,\omega,\omega$ or $1,\omega^2,\omega^2,\omega^2$. In
both cases 1 is not an eigenvalue of $g \wedge g$ contradicting the
ESQ property. Hence for every $1\ne g\in M$ the 
multiplicity of the eigenvalue 1 is at least two, and then 
we infer that the 
eigenvalues of $g$ are $1, 1, \omega, \omega^2$. 
Now $M$ is a proper subgroup of the group of
diagonal matrices with order $3^3$ and determinant 1, so it is
generated by at most two elements. Since no basis vector 
can be fixed by both generators, each 
one of the basis vectors $e_1, e_2, e_3, e_4$ is fixed by one generator, and the eigenvalue for the other
generator on the same eigenvector must be $\omega$ or $\omega^2$.
Then the product of the two
generators does not have eigenvalue 1, contrary to our assumption.

In summary, we have proved that $r=5$ and there is a $g\in M$ which
has a diagonal matrix $\mathrm{diag}(\epsilon, \epsilon^2, \epsilon^4,
\epsilon^3)$ with respect to the basis $e_1,e_2,e_3,e_4\in \E^4$
(where $\epsilon\in \E$ is a fifth root of unity). Now $g \wedge g$ is 
$\mathrm{diag}(\epsilon, \epsilon^2, \epsilon^4,
\epsilon^3, 1, 1)$ with respect to the basis $ e_2 \wedge e_3, e_3
\wedge e_4, e_4 \wedge e_1, e_1 \wedge e_2, e_1 \wedge e_3, e_2 \wedge
e_4$ of $\Lambda^2 \E^4$. Hence any isomorphism from the $\langle g
\rangle$-module $\E^4$ into $\Lambda^2 \E^4$ maps $e_i$ to a multiple of
$e_{i+1} \wedge e_{i+2}$ (where the indices are taken
modulo 4). 

Now take an element $h\in\mathbf{C}_K(g)\leq\GL_4(\E)$. Since
the eigenvalues of $g$ are distinct, $h$ is also diagonal
with respect to the basis $e_1, e_2, e_3, e_4$, say,
$h=\mathrm{diag}(\lambda_1, \lambda_2, \lambda_3, \lambda_4)$. The
matrix of $h \wedge h$ restricted to
$\langle e_2 \wedge e_3 ,e_3 \wedge e_4 ,e_4
\wedge e_1 ,e_1 \wedge e_2 \rangle$ is 
$\mathrm{diag}(\lambda_2\lambda_3, \lambda_3\lambda_4, \lambda_4\lambda_1, 
\lambda_1\lambda_2)$. From the ESQ property it follows that the
eigenvalues of $h$ satisfy the same system of equations~\eqref{eqs} as above,
therefore $h$ is a power of $g$. 
Thus we have shown that $\langle g \rangle$ is a
self-centralizing subgroup of $K$. In particular, we obtain that 
$M=\langle g \rangle$ is cyclic of order~5. 

Since $M\vartriangleleft K$ and $M$ is self-centralizing, $K$ is
isomorphic to a subgroup in the holomorph of $M$, which is $L$. Since
the characteristic polynomial of any element $g$ generating $M$ is
$x^4+x^3+x^2+x+1$, $M\leq\GL_4(\F)$ is unique up to
conjugacy. Hence $K$ can be embedded into a subgroup of
$\GL_4(\F)$ that is isomorphic to $L$.

There are three subgroups of $L$ containing $M$; namely, $M$, a dihedral group
of order 10, and $L$. If 5 has a square root in $\F$, then the
dimensions of the irreducible representations of the dihedral group
$D_5$ are at most two, hence proper subgroups of $L$ cannot act
irreducibly on $\F^4$ in this case.
\end{proof}

Theorem~\ref{main}(d) is an immediate consequence of the following results.

\begin{theorem} \label{thm-expsquare}
Let $p$ be an odd prime, and let $\{x_1,x_2,x_3,x_4\}$ be a generating
set for $H_{p,4}$.
\begin{itemize}
\item[(a)] There is no $4$-generator UCS\, $5$-group with exponent $5^2$. 
\item[(b)] If $p \equiv \pm 1 \pmod{5}$, then there is a unique isomorphism
class of $4$-generator UCS $p$-groups with exponent $p^2$, namely,
\begin{equation*}
\begin{aligned}
G_1=H_{p,4}/\langle 
&x_1^p [x_1,x_3][x_4,x_1][x_2,x_3]^2[x_4,x_2]^2[x_4,x_3]^2,\\
&x_2^p [x_2,x_1][x_3,x_1]^2[x_4,x_1]^2[x_3,x_2][x_3,x_4]^2,\\ 
&x_3^p [x_2,x_1]^2[x_1,x_4]^2[x_2,x_3][x_2,x_4]^2[x_3,x_4],\\
&x_4^p [x_1,x_2]^2[x_1,x_3]^2[x_1,x_4][x_3,x_2]^2[x_4,x_2],\\
&[x_1,x_2][x_1,x_4][x_3,x_4], \; [x_1,x_3][x_2,x_3][x_2,x_4]
\rangle.
\end{aligned}
\end{equation*}
\item[(c)] If $p \equiv \pm 2 \pmod{5}$, then there are two isomorphism
classes of $4$-generator UCS $p$-groups with exponent $p^2$, namely,
$G_1$ as in case~\textup{(b)} and another group $G_2$.
\end{itemize}

Moreover, $|\Aut(G_1)^{\overline{G_1}}|=20$ and 
$|\Aut(G_2)^{\overline{G_2}}|=5$.
\end{theorem}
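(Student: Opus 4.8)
The plan is to reduce Theorem~\ref{thm-expsquare} to Theorem~\ref{d=4} via the correspondence of Theorem~\ref{UCSconstr}(b), and then to sort out the isomorphism classes by hand. First I would record the precise shape of such a group. By Theorems~\ref{4gen} and~\ref{3th} a $4$-generator exponent-$p^2$ UCS $p$-group is $G=H/N$ with $H=H_{p,4}$, where $N\leq\Phi(H)$, $N\cap H^p=1$, $H'N=\Phi(H)$ and $|N|=p^6$. Write $V=\overline H$, identify $H^p$ with $V$ via the $p$-th power map (Lemma~\ref{strmodules}) and $H'$ with $W:=\VwedgeV$ as $\GL(V)$-modules, so $\Phi(H)=V\oplus W$. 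A dimension count gives $\dim(N\cap W)=2$, and $N/(N\cap W)$ is a complement to $W/(N\cap W)$ in $(V\oplus W)/(N\cap W)$; hence $N$ is the graph of a linear isomorphism $\psi\colon V\to W/U$, where $U:=N\cap W$, and since $N$ is $K$-invariant for $K:=\GL(V)_N=\Aut(G)^{\overline G}$, the map $\psi$ is a $K$-module isomorphism. Thus $K$ is an irreducible ESQ-subgroup of $\GL_4(p)$ with witnessing submodule $U$; conversely, the construction in the proof of Theorem~\ref{UCSconstr}(b) turns any irreducible ESQ-subgroup $K\leq\GL_4(p)$ together with a $K$-submodule $U\leq W$ and a $K$-isomorphism $\psi\colon V\to W/U$ into such a group $H/N$. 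Two such $N$ give isomorphic $p$-groups precisely when they are $\GL_4(p)$-conjugate (as in the use of \cite[Theorem~2.8]{obrien} in Lemma~\ref{dual}).

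Next I would invoke Theorem~\ref{d=4}. Since $p$ is odd, $\mathrm{char}(\F_p)\neq2$, and Theorem~\ref{d=4} asserts $\mathrm{char}(\F_p)\neq5$ for every finite irreducible ESQ-subgroup of $\GL_4(\F_p)$; together with the previous paragraph and Lemma~\ref{ESQstructure}(c) (which excludes $\dim V<3$, so in Theorem~\ref{UCSconstr}(b2) with $r=4$ only $k=1$ survives) this gives part~(a): there is no $4$-generator exponent-$5^2$ UCS $5$-group. For $p\neq5$, Theorem~\ref{d=4} says that $K$, with its action on $V$, is (conjugate to) a subgroup of $L=\AGL_1(5)$ acting on the deleted permutation module, that $5\mid|K|$, and that $K\cong L$ whenever $5$ is a square in $\F_p$; by quadratic reciprocity ($5\equiv1\pmod4$) this last condition is exactly $p\equiv\pm1\pmod5$. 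After conjugating we may take $K\leq L$ with $M:=C_5\leq K$, the unique subgroup of $L$ of order $5$. Since $W/U\cong V$ and $V|_M$ is the sum of the four non-trivial characters of $M$, the submodule $U$ must contain the two-dimensional trivial $M$-isotypic component $U_0=(\VwedgeV)^M$ of $W$; as $\dim U=2=\dim U_0$, necessarily $U=U_0$. Hence in all cases $U$ is the fixed $\GL_4(p)$-conjugate of $U_0$, and the relevant isomorphism classes correspond to the $\GL_4(p)_{U_0}$-orbits on the set of isomorphisms $\psi\colon V\to W/U_0$ whose stabilizer $\mathrm{Stab}_{\GL_4(p)_{U_0}}(\psi)=\GL_4(p)_{N_\psi}$ acts irreducibly on $V$.

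For part~(b) ($p\equiv\pm1\pmod5$), Theorem~\ref{d=4} forces $K\cong L$, and $V$ is an absolutely irreducible $\F_pL$-module, so $\End_L(V)=\F_p$ and the $L$-equivariant isomorphism $\psi$ is unique up to a non-zero scalar; a scalar matrix $\lambda I\in\GL_4(p)$ carries $N_\psi$ to $N_{\lambda\psi}$, so all of these are $\GL_4(p)$-conjugate, giving a single class, with $|\Aut(G)^{\overline G}|=|L|=20$. For part~(c) ($p\equiv\pm2\pmod5$) one has $\mathrm{ord}_5(p)=4$, so $V|_M$ is $\F_p$-irreducible; the subgroups of $L$ of order divisible by $5$ that act irreducibly on $V$ are $M$, the dihedral group of order $10$, and $L$, but every $D_5$-equivariant isomorphism $\psi\colon V\to W/U_0$ is equivariant for some $\GL_4(p)$-conjugate of $L$ containing that $D_5$, so its stabilizer is $\cong L$ and it contributes nothing new. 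The stabilizer $\cong L$ gives the class of $G_1$ exactly as in~(b), with $|\Aut(G_1)^{\overline{G_1}}|=20$; the stabilizer $\cong M$ gives a second class, since here $\End_M(V)\cong\F_{p^4}$ and the isomorphisms $\psi$ with stabilizer exactly $M$ form a single orbit under $\GL_4(p)_{U_0}$ (which contains the normalizer $\GammaL_1(p^4)$ of $M$), producing $G_2$ with $|\Aut(G_2)^{\overline{G_2}}|=5$. It remains to match the abstractly described $G_1$ with the displayed presentation: writing the subgroup $N_{\psi_0}$ attached to the $L$-equivariant $\psi_0$ in terms of the generators $x_1,\dots,x_4$ and reading off, for each $i$, the element of $H'$ equal to $x_i^p$ yields the stated relators (this, like the orbit count in~(c), can be verified directly or with \cite{GAP,Magma}).

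I expect the main obstacle to be the orbit bookkeeping in part~(c): showing that the three a priori candidates $M$, $D_5$, $L$ for $\GL_4(p)_N$ collapse to exactly two isomorphism classes — concretely, proving that a $D_5$-invariant $N$ of the required shape is always $\GL_4(p)$-conjugate to the $L$-invariant one (so no genuine ``$D_5$-class'' appears), and that the $M$-invariant choices of $\psi$ that are not $D_5$-invariant all lie in one $\GL_4(p)_{U_0}$-orbit. The other potentially delicate point is the explicit identification of the abstract group $G_1$ with the presentation in the statement.
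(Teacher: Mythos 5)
Your reduction (writing $N$ as the graph of a $K$-isomorphism $\psi\colon V\to \Lambda^2V/U$, forcing $U=(\Lambda^2V)^M$ by comparing trivial $M$-constituents), your part~(a), and your part~(b) all track the paper's argument and are sound: the paper likewise passes through Theorem~\ref{UCSconstr}, Theorem~\ref{d=4} and quadratic reciprocity, and in case $K\cong L$ uses absolute irreducibility to see that $\psi$ is unique up to scalars, so that scalar matrices fuse the $p-1$ admissible subgroups into one class. The genuine gap is in part~(c). The two statements you yourself flag as ``the main obstacle'' --- (i) that no admissible $N$ has full stabilizer isomorphic to $D_5$, and (ii) that the admissible $N$ with stabilizer exactly $M\cong C_5$ exist and form a single orbit --- are precisely the substance of the paper's proof, and your proposal gives no argument for them; the fallback ``verified directly or with GAP/Magma'' cannot settle a statement about all primes $p\equiv\pm2\pmod 5$.

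What is missing is the computation the paper devotes the second half of its proof to. Since $\textup{End}_M(V)\cong E:=\F_{p^4}$, the $M$-equivariant choices are $N_w$, $w\in E^\times$, and one must determine how the normalizer $\GammaL_1(p^4)=E^\times\rtimes\Gal(E/\F_p)$ of $M$ moves them: the torus element ``multiplication by $\gamma$'' acts on $[\Lambda^2V,M]$ as multiplication by $\gamma^k$ with $k=(1+p)p^i\equiv 1\pmod 5$ (this needs the eigenvalue analysis of $\gamma\wedge\gamma$ together with $\varepsilon^k=\varepsilon$), so it sends $N_w$ to $N_{w\gamma^{n(k-1)}}$, and since $\gcd(k-1,p^4-1)=5$ the torus only moves $w$ within its coset modulo fifth powers; the Frobenius sends $N_w$ to $N_{w^p}$ and, because $p$ is a primitive root mod~$5$, permutes the four non-trivial cosets transitively. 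Only from this does one obtain: exactly two orbits; non-emptiness of the second class (so $G_2$ exists); that its stabilizer is exactly $M$, giving $|\Aut(G_2)^{\overline{G_2}}|=5$ and hence $G_1\not\cong G_2$; and that no $D_5$ occurs as a full stabilizer (a $D_5$-invariant $N_w$ has $w\in\langle\gamma^{p^2+1}\rangle\subseteq(E^\times)^5$ because $5\mid p^2+1$, so its stabilizer already has order $20$). Without this analysis your claims (i) and (ii), the existence of $G_2$, and the stated automorphism-group orders are asserted rather than proved; by contrast, the explicit presentation of $G_1$ is a uniform finite computation (the paper's transition matrix $t$ with $\det t=25$) and is a far smaller issue.
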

\begin{proof}
Let $G$ be a 4-generator UCS group of exponent $p^2$, where $p$ is an
odd prime. Let $K$ denote $\Aut(G)^{\overline{G}} \leq\GL_4(p)$. 
By Theorem~\ref{UCSconstr}, $K$ is an irreducible ESQ-group. Using 
Theorem~\ref{d=4}
we immediately see that there are no UCS groups of exponent $5^2$ with
four generators. Let us use the notation introduced before Theorem~\ref{d=4}.
In particular, let $L$ denote the group $\AGL_1(5)$ acting on a 4-dimensional
vector space $V$.
By quadratic reciprocity, the number 5 is a square in the
$p$-element field if and only if $p\equiv\pm1\pmod{5}$. For these primes we have
$K=L$, for primes with $p\equiv\pm2\pmod{5}$ we can conclude that
$K\leq L$ with 5 dividing the order of $K$.

First assume that $K=L$. Since $L$ has up to equivalence a unique
faithful irreducible 4-dimensional representation, we can choose a
generating set $x_1,x_2,x_3,x_4$ of $H=H_{p,4}$
such that the matrices of generators of $L$ acting on $\overline{H}$
will be
$$
a=\begin{pmatrix}
0&1&0&0\\
0&0&1&0\\
0&0&0&1\\
-1&-1&-1&-1
\end{pmatrix},
\qquad
b=\begin{pmatrix}
0&1&0&0\\
0&0&0&1\\
1&0&0&0\\
0&0&1&0
\end{pmatrix}.
$$
Choosing the basis
$[x_1,x_2]$, $[x_1,x_3]$, $[x_1,x_4]$, $[x_2,x_3]$, $[x_2,x_4]$, $[x_3,x_4]$ in $H'$
the action of $a$ and $b$ on $H'$ is described by the matrices
$$
a'=\begin{pmatrix}
0&0&0&1&0&0\\
0&0&0&0&1&0\\
1&0&0&-1&-1&0\\
0&0&0&0&0&1\\
0&1&0&1&0&-1\\
0&0&1&0&1&1
\end{pmatrix},
\qquad
b'=\begin{pmatrix}
0&0&0&0&1&0\\
-1&0&0&0&0&0\\
0&0&0&1&0&0\\
0&0&-1&0&0&0\\
0&0&0&0&0&-1\\
0&1&0&0&0&0
\end{pmatrix}.
$$
Using the transition matrix
$$
t=\begin{pmatrix}
0&-1&1&-2&2&2\\
1&2&2&1&0&-2\\
2&0&-2&-1&-2&-1\\
-2&-2&-1&2&1&0\\
1&0&1&0&0&1\\
0&1&0&1&1&0
\end{pmatrix}
$$
of determinant $25\ne 0$ we obtain
$$
ta't^{-1}=\begin{pmatrix}
0&1&0&0&0&0\\
0&0&1&0&0&0\\
0&0&0&1&0&0\\
-1&-1&-1&-1&0&0\\
0&0&0&0&1&0\\
0&0&0&0&0&1
\end{pmatrix},
\quad
tb't^{-1}=\begin{pmatrix}
0&1&0&0&0&0\\
0&0&0&1&0&0\\
1&0&0&0&0&0\\
0&0&1&0&0&0\\
0&0&0&0&0&1\\
0&0&0&0&-1&0
\end{pmatrix}.
$$
Let $W$ denote the subspace generated by the first four rows of $t$,
and $U$ the subspace generated by the last two rows. Then
$H'=W\oplus U$, furthermore $V$ and $W$ are isomorphic
$L$-modules via the isomorphism mapping $x_i$ to the $i$th row of $t$
for $i=1,2,3,4$. 
Since $V$ is an absolutely irreducible $L$-module, the
isomorphism between $V$ and $W$ is determined up to a scalar factor.
By Theorem~\ref{3th}(iii) $G=H/N$ for some $K$-invariant normal subgroup $N$
such that $N\leq\Phi(H)$, 
$H^p\cap N=1$ and $\overline{H}$ and $H'/(N\cap H')$ are
equivalent $K$-modules. 
Therefore, there are exactly $p-1$ suitable normal subgroups
$N$, but they can be mapped
to one another using automorphisms of $H$ sending each $x_i$ to
$x_i^k$ for some fixed $k=1,\dots,p-1$. Hence there is a unique
isomorphism class of those 4-generator UCS groups $G$ of exponent~$p^2$ 
where the automorphism group of $G$ induces $L$ on $\overline{G}$.
Using the matrices above it is straightforward to write down the
defining relations of this group. (In Theorem~\ref{thm-expsquare}
we avoided inverses by reversing some commutators.)

Now consider the case, when $K$ is a proper subgroup of $L$. This can
happen only if $p\equiv\pm2\pmod{5}$. For these primes the
five-element cyclic group is an irreducible subgroup in
$\GL_4(p)$. Let $E=\mathrm{GF}(p^4)$ and $\epsilon\in E$ a
primitive fifth root of unity.
Up to conjugation $H=H_{p,4}$ has a unique
automorphism $a$ of order 5. 
Let $M=\langle a \rangle$ and let $W=[H',M]$. Then
the 6-dimensional space $H'$ decomposes as a direct sum
$H'=W\oplus\mathbf{C}_{H'}(M)$. The $M$-modules $\overline{H}$,
$H^p$, and
$W$ are isomorphic, and they can be identified with the additive group
of $E$ so that the action of $a$ becomes the multiplication by
$\epsilon$. Using this identification
every $M$-invariant normal subgroup $N\vartriangleleft H$ such that
$H/N$ is an UCS group with exponent $p^2$ has the form
$$
N_w = \{ (\vartheta,w\vartheta) \in H^p\oplus W \mid \vartheta \in E
\} \oplus \mathbf{C}_{H'}(M),
$$ 
for $w\in W\setminus\{0\}$.
Let $\gamma$ be a generator of the multiplicative group of $E$. 
The multiplication by $\gamma$ commutes with the multiplication by
$\epsilon$, hence $W$ and $\mathbf{C}_{H'}(M)$ are invariant
under the action of any automorphism of $H$ that corresponds 
to the
multiplication by $\gamma$ on $\overline{H}$.
The eigenvalues of the $\mathrm{GF}(p)$-linear transformation
determined by the multiplication by $\gamma$ on $\overline{H}\cong E$ 
are $\gamma, \gamma^p, \gamma^{p^2}, \gamma^{p^3}$, and hence its
eigenvalues on $H'$ are $\gamma^{p^i+p^j}$ for $0\leq i< j\leq 3$. Now
$\gamma^{1+p}$ has degree 4 over $\mathrm{GF}(p)$, so we infer that
the eigenvalues on $W$ are $\gamma^{(1+p)p^i}$ for $i=0,1,2,3$. 
If $p\equiv2\pmod{5}$ then let $k=(1+p)p$, while if $p\equiv3\pmod{5}$ 
then let $k=(1+p)p^2$. In both cases we have $k\equiv1\pmod{5}$.
Now the multiplication by $\gamma^k$ on $E$ has the same eigenvalues
as the action of $\gamma$ on $W$, moreover, $\epsilon^k=\epsilon$,
hence the action of $\gamma$ on $W$ is multiplication by
$\gamma^k$.
Now $\gamma^n$ transforms $N_w$ to 
\begin{equation*}
\begin{split}
&\{ (\vartheta\gamma^n,w\vartheta\gamma^{nk}) 
\in H^p\oplus W \mid \vartheta \in E
\} \oplus \mathbf{C}_{H'}(M) 
\\
&= \left\{ (\vartheta\gamma^n,\left(w\gamma^{n(k-1)}\right) \vartheta\gamma^n) 
\in H^p\oplus W \mid \vartheta \in E
\right\} \oplus \mathbf{C}_{H'}(M) = N_{w\gamma^{n(k-1)}}.
\end{split}
\end{equation*}
A simple calculation using Euclidean algorithm yields that
$\mathrm{gcd}(k-1,p^4-1)=5$. Hence $N_{w_1}$ and $N_{w_2}$ determine
isomorphic quotient groups $H/N_{w_1}$, $H/N_{w_2}$ 
provided $w_1$ and $w_2$ lie in the same
coset of the multiplicative group of $E$ modulo the fifth powers.
Field automorphisms of $E$ normalize $\langle \gamma \rangle$, hence
we can map $N_w$ to
\begin{equation*}
\begin{split}
&\{ (\vartheta^p,(w\vartheta)^p) \in H^p\oplus W \mid \vartheta \in E
\} \oplus \mathbf{C}_{H'}(M) \\
&= \{ (\vartheta^p,w^p\vartheta^p) \in H^p\oplus W \mid \vartheta \in E
\} \oplus \mathbf{C}_{H'}(M) = N_{w^p}.
\end{split}
\end{equation*}
Now it follows that for $G_1=H/N_1$ the automorphism group induces $L$
on $\overline{G_1}$. Furthermore, the other four cosets modulo the
subgroup of fifth powers are permuted cyclically by the Frobenius
automorphism, since $p$ is a primitive root modulo 5.
Hence for each $w\in E$ which is not a fifth power in $E$
the quotient groups $H/N_w$ are all isomorphic to each other, 
and this is the group
$G_2$ in Theorem~\ref{thm-expsquare}(c).
\end{proof}

It would be possible to write down explicit defining relations of $G_2$
using a generator element of the multiplicative group of
$\mathrm{GF}(p^4)$. However, we thought that a very complicated
formula will not help the reader, so we were content with stating the
existence of~$G_2$.

In Theorem~\ref{d=p}(a,c) subgroups of the affine general
linear group $\AGL_1(\F_{r^s})$ are candidates for minimal irreducible
ESQ-groups. We shall clarify when these examples arise, and
construct larger ESQ-groups.

Let $t$ be a power of a prime $r$. Identify the 1-dimensional affine
semilinear group $\AGammaL_1(\F_t)$ with the cartesian product of sets
\[
  \AGammaL_1(\F_t)=\Gal(\F_t/\F_r)\times\F_t^\times\times\F_t.
\]
Here $\Gal(\F_t/\F_r)$ denotes the group of field automorphisms of
$\F_t$. Multiplication in $\AGammaL_1(\F_t)$ is defined by
\[
  (\sigma_1,\lambda_1,\mu_1)(\sigma_2,\lambda_2,\mu_2)=
  (\sigma_1\sigma_2,(\lambda_1\sigma_2)\lambda_2,
                                   (\mu_1\sigma_2)\lambda_2+\mu_2)
\]
where $\sigma_1,\sigma_2\in\Gal(\F_t/\F_r)$,
$\lambda_1,\lambda_2\in\F_t^\times$ and $\mu_1,\mu_2\in\F_t$.
A 2-transitive action of $\AGammaL_1(\F_t)$ on $\F_t$ is given by
\[
  \alpha(\sigma,\lambda,\mu)=(\alpha\sigma)\lambda+\mu
\]
where $\alpha\in\F_t$, and $(\sigma,\lambda,\mu)\in
\Gal(\F_t/\F_r)\times\F_t^\times\times\F_t=\AGammaL_1(\F_t)$.

\begin{theorem}~\label{AGammaL}
Let $t,q$ be powers of distinct primes. Let
$L\leq\F_t^\times$, and suppose that $q\in L$, and there exist
{\em distinct} $\alpha,\beta\in L$ such that $\alpha+\beta=1$.
Define $G_L$ to be the subgroup of
$\AGammaL_1(\F_t)$ containing the elements $(\sigma,\lambda,\mu)$ such
that $\mu\in\F_t$, $\lambda\in L$, and $\sigma$ induces the identity
automorphism $\F_t^\times/L\to\F_t^\times/L$.
Then there exist $|\F_t^\times:L|$ pairwise
non-isomorphic absolutely irreducible ESQ $\F_q G_L$-modules of
dimension $|L|$. In particular, $\AGammaL_1(\F_t)$ is an absolutely
irreducible ESQ-subgroup of $\GL_{t-1}(\F_q)$ if $t>3$ and $\gcd(t,q)=1$.
\end{theorem}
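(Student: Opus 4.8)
I would realise $G_L$ concretely on the deleted permutation module of the $2$-transitive action of $\AGammaL_1(\F_t)$ on $\F_t$, split that module under $G_L$ into pieces indexed by the cosets of $L$, and then put the exterior self-quotient structure on each piece by an explicit Fourier-basis formula built from the pair $\alpha,\beta$.

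First I would set up the model. Since $\gcd(t,q)=1$, the prime dividing $t$ is invertible in $\F_q$, so the permutation $\F_q\AGammaL_1(\F_t)$-module on $\F_t$ splits as $\mathbf{1}\oplus W$ with $\dim W=t-1$, and over $\bar\F_q$ the translation subgroup $T\cong(\F_t,+)$ is diagonalisable: $W\otimes\bar\F_q=\bigoplus_{b\in\F_t^\times}\bar\F_q v_b$, where $v_b$ affords the nontrivial additive character $\chi_b$. The point stabiliser acts on the set of lines $\{\bar\F_q v_b\}$ through its action on the nonzero additive characters, i.e.\ on $\F_t^\times$, where scalars act by multiplication and field automorphisms by Galois. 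Restricting to $G_L$: a scalar $\lambda\in L$ moves $b$ inside its coset $bL$, and a field automorphism in $G_L$ preserves every coset of $L$ in $\F_t^\times$ precisely because it induces the identity on $\F_t^\times/L$. Hence the $G_L$-orbits on $\{\bar\F_q v_b\}$ are exactly the cosets $aL$, so $W\otimes\bar\F_q=\bigoplus_{aL}W_{aL}$ with $\dim W_{aL}=|L|$. Each $W_{aL}$ is absolutely irreducible: $T\trianglelefteq G_L$ acts on it through the \emph{distinct} characters $\chi_b$ $(b\in aL)$, permuted transitively by $G_L$, so there is no proper nonzero $\bar\F_q G_L$-submodule; and distinct cosets give modules with disjoint $T$-spectra, hence non-isomorphic ones. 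For rationality, the Frobenius $z\mapsto z^q$ sends $\chi_b$ to $\chi_{\bar q b}$ with $\bar q$ the residue of $q$ modulo $\mathrm{char}(\F_t)$, and the hypothesis $q\in L$ says $\bar q\in L$, so $\mathrm{Frob}_q$ fixes every coset and hence each $W_{aL}$; an absolutely irreducible $\bar\F_q$-module fixed by $\mathrm{Frob}_q$ is realisable over $\F_q$ (finite fields have trivial Schur index). This already yields the claimed $[\F_t^\times:L]$ pairwise non-isomorphic absolutely irreducible $\F_q G_L$-modules of dimension $|L|$.

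For the ESQ property I would fix $\alpha,\beta\in L$ with $\alpha+\beta=1$ and $\alpha\ne\beta$ (both nonzero, their sum being $1$) and define a $\bar\F_q$-linear map $\iota\colon W_{aL}\to\Lambda^2 W_{aL}$ on the Fourier basis by $v_c\mapsto v_{\alpha c}\wedge v_{\beta c}$; this makes sense since $\alpha c,\beta c\in aL$ and $\alpha c\ne\beta c$, and $\iota$ is injective because $v_{\alpha c}\wedge v_{\beta c}$ affords $\chi_{\alpha c+\beta c}=\chi_c$, so the images lie on different $T$-isotypic lines. The relations $\alpha c+\beta c=c$ and $\alpha,\beta\in L$ force $\iota$ to commute with $T$ and with the scalars of $G_L$, so $W_{aL}$ embeds $G_L$-equivariantly in $\Lambda^2 W_{aL}$. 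Since the permutation module is self-dual, $W\cong W^*$ and hence $W_{aL}^*\cong W_{-aL}$ (matching $T$-spectra); dualising $\iota$ gives a $G_L$-surjection $\Lambda^2 W_{-aL}=(\Lambda^2 W_{aL})^*\twoheadrightarrow W_{-aL}$, which descends to $\F_q$ because $W_{-aL}$ is already irreducible over $\F_q$. As $a$ runs over coset representatives so does $-a$, so every $W_{aL}$ is ESQ. The final assertion is the case $L=\F_t^\times$: then $G_L=\AGammaL_1(\F_t)$, there is a single module of dimension $t-1$, and for $t>3$ one can always pick $\alpha\in\F_t^\times$ with $\alpha\ne1$ and $\alpha\ne1-\alpha$, so the hypotheses hold automatically whenever $\gcd(t,q)=1$.

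The step I expect to be the real obstacle is the equivariance of $\iota$ (equivalently of its dual) under the field automorphisms lying in $G_L$. Such a $\sigma$ carries $v_{\alpha c}\wedge v_{\beta c}$ to $v_{\alpha^\sigma c^\sigma}\wedge v_{\beta^\sigma c^\sigma}$, which equals $\iota(v_{c^\sigma})$ only when $\sigma$ fixes the \emph{ordered} pair $(\alpha,\beta)$: interchanging $\alpha$ and $\beta$ introduces a sign in $\Lambda^2$ and can kill the map. So one must either choose $\alpha,\beta$ inside the fixed field of the relevant Galois subgroup, or replace $\iota$ by a suitably signed sum over the Galois orbit of $\{\alpha,\beta\}$ and show it is still nonzero; either way this comes down to a combinatorial study of the set $\{(\alpha,\beta)\in L^2:\alpha\ne\beta,\ \alpha+\beta\in L\}$ under $L$-scaling, the swap, and the Galois action, and this is where the hypotheses have to be pushed hardest. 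When $\F_t$ is a prime field there is no Galois action to reckon with and the construction above goes through verbatim.
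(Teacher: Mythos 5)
Your construction is, up to notation, the same as the paper's: your Fourier eigenbasis $v_b$ is the paper's basis $f_\alpha=\sum_{\mu}\zeta^{T(\alpha\mu)}e_\mu$, your coset pieces $W_{aL}$ are the paper's $W(\lambda'L)$, irreducibility is proved the same way (the translation subgroup acts by $|L|$ distinct characters permuted transitively by $G_L$), and your descent to $\F_q$ via $q\in L$ and triviality of Schur indices is the paper's appeal to Brauer's theorem on character fields. Up to that point the proposal is sound.

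The gap is exactly the one you flag at the end, and it is a genuine gap, not a technicality you may defer: you never exhibit a $G_L$-equivariant surjection $\Lambda^2 W_{aL}\to W_{aL}$, only one equivariant under the subgroup with trivial Galois part. In the eigenbasis, any $G_L$-homomorphism $\Lambda^2 W_{aL}\to W_{aL}$ must send $f_{\alpha'}\wedge f_{\beta'}$ to $c_{\alpha',\beta'}\,f_{\alpha'+\beta'}$ with $c$ antisymmetric and constant on $G_L$-orbits of ordered pairs; if some element of $G_L$ whose Galois part $\sigma$ satisfies $\alpha\sigma=\beta$ (after rescaling by the sum) reverses a pair, then $c=-c$ on that orbit, so in odd characteristic that orbit contributes nothing, and neither of your proposed repairs helps in general: a suitable pair need not exist in the relevant fixed field, and the signed Galois average is precisely the invariant $c$, which vanishes. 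Moreover the obstruction really occurs under the stated hypotheses: for $t=4$, $L=\F_4^\times$, $q$ odd, the only admissible pair is $\{\omega,\omega^2\}$ ($\omega$ a primitive cube root of unity), it is swapped by the Frobenius, $G_L=\AGammaL_1(\F_4)\cong S_4$, and the $3$-dimensional module is the standard module of $S_4$, whose exterior square is the standard module twisted by the sign character and hence not a quotient isomorphic to it; so no ESQ structure exists there, and the final sentence of the statement cannot be proved as it stands. (The paper's own proof is silent on this point -- it verifies only the $N$-module isomorphisms $\langle f_{\alpha'}\wedge f_{\beta'}\rangle\cong\langle f_{\alpha'+\beta'}\rangle$ -- and its later applications, $t$ prime and $t=8$, happen to avoid the problem because there one can choose distinct $\alpha,\beta\in L$ with $\alpha+\beta=1$ not interchanged by any field automorphism occurring in $G_L$.) To close your argument you must add and verify precisely that strengthened condition: there exist distinct $\alpha,\beta\in L$ with $\alpha+\beta=1$ such that no $\sigma\in\Gal(\F_t/\F_r)$ inducing the identity on $\F_t^\times/L$ maps $\alpha$ to $\beta$ (automatic when $t$ is prime, or when $\F_q$ has characteristic $2$); with such a pair your map $\iota$, combined with the projection onto the isotypic components indexed by the coset, does yield the required $G_L$-epimorphism, and one must then check this condition in each intended application.
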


\begin{proof}
Let $V=(\F_q)^t$ be the permutation module for $\AGammaL_1(\E)$ where
$\E:=\F_t$ has (prime) characteristic $r$. Let $(e_\alpha)_{\alpha\in\E}$ be a
basis for $V$ indexed by $\alpha\in\E$. The action of $\AGammaL_1(\E)$
on $V$ is given by 
\[
  e_\alpha(\sigma,\lambda,\mu)=e_{(\alpha\sigma)\lambda+\mu}
    \qquad\qquad(\alpha\in\E).
\]
Our proof has two cases. Assume first that $q\equiv 1\pmod r$.
In this case the hypothesis $q\in L$ holds trivially, as $1\in L$ and
$q=1$ in $\E$. We show later that the second case when $q\not\equiv
1\pmod r$, reduces to this first case. 

Let $T\colon\E\to\F_r$ denote the
absolute trace function: $T(\alpha)=\sum_\sigma
\alpha\sigma$ where $\sigma$ ranges over $\Gal(\E/\F_r)$.
Let $\zeta\in\F_q^\times$ have order $r$, and define
\[
  f_\alpha=\sum_{\mu\in\E} \zeta^{T(\alpha\mu)}e_\mu.
\]
Since $|\E|^{-1}\sum_{\alpha\in\E} \zeta^{T(\alpha(\mu-\nu))}=0$
if $\mu\ne\nu$, and 1 otherwise, we have
\[
  |\E|^{-1}\sum_{\alpha\in\E} \zeta^{-T(\nu\alpha)}f_\alpha
    =\sum_{\mu\in\E}\left(|\E|^{-1}\sum_{\alpha\in\E} 
      \zeta^{T(\alpha(\mu-\nu))}\right)e_\mu
    =e_\nu.
\]
Therefore $(f_\alpha)_{\alpha\in\E}$, defines a new basis for $V$.
The action of $\AGammaL_1(\E)$ on the new basis is monomial, and given by
\begin{equation}\label{e4}
  f_\alpha(\sigma,\lambda,\mu)=
    \zeta^{-T((\alpha\sigma)\lambda^{-1}\mu)}f_{(\alpha\sigma)\lambda^{-1}}.
\end{equation}

Consider the normal subgroup $N=\{(1,1,\mu)\mid \mu\in\E\}$ of
$G_L$. By equation (\ref{e4}),
$f_\alpha(1,1,\mu)=\zeta^{-T(\alpha\mu)}f_{\alpha}$. Thus each
$\langle f_\alpha\rangle$ is an irreducible $\F_qN$-module.
The non-degeneracy of the map
$\E\times\E\to\F_r\colon (\alpha,\beta)\mapsto T(\alpha\beta)$
implies that there is an $N$-module isomorphism
\begin{equation}\label{e5}
  \langle f_\alpha\rangle\cong\langle f_\beta\rangle
    \quad\text{if and only if}\quad \alpha=\beta.
\end{equation}

Let $\lambda'L$ be a coset of $L$ in $\E^\times$, and set
\[
  W(\lambda'L)=\langle f_{\alpha}\mid\alpha\in\lambda'L\rangle
  =\sum_{\alpha\in\lambda'L}\langle f_{\alpha}\rangle.
\]
It follows from equation (\ref{e4}) that $W(\lambda'L)$ is a $G_L$-module.
We shall show that it is irreducible.
Set $M=\{(1,\lambda,\mu)\mid\lambda\in L,\mu\in\E\}$.
Then $M$ is a normal subgroup of $G_L$, and by (\ref{e5}) the inertia subgroup of
$\langle f_{\lambda'}\rangle$ in $M$ is $N$. Hence by Clifford's
theorem, the induced module
$\text{Ind}_N^M(\langle f_{\lambda'}\rangle)=W(\lambda'L)$ is
$M$-irreducible, and {\em a fortiori} $G_L$-irreducible.

There are two decompositions of $V$:
\[
  V=\sum_{\alpha\in\E}\langle f_\alpha\rangle\quad\text{and}\quad
  V=\langle f_0\rangle\oplus\sum_{\lambda'L\in\E^\times/L}W(\lambda'L).
\]
The first is as a direct sum of irreducible $\F_q N$-modules, and the
second, a direct sum of irreducible $\F_q G_L$-modules.
If $\alpha',\beta'\in\lambda'L$, then by equation (\ref{e4}) 
\begin{align*}
  (f_{\alpha'}\wedge f_{\beta'})(\sigma,\lambda,\mu)
    =\zeta^{-T((\alpha'+\beta')\sigma\lambda^{-1}\mu)}
    f_{(\alpha'\sigma)\lambda^{-1}}\wedge f_{(\beta'\sigma)\lambda^{-1}}.
\end{align*}
Hence if $\alpha'\ne\beta'$, then $\langle f_{\alpha'}\wedge
f_{\beta'}\rangle\cong\langle f_{\alpha'+\beta'}\rangle$ as $\F_q
N$-modules by (\ref{e4}). 
Thus $W(\lambda'L)$ is an ESQ-module if
and only if $\alpha'+\beta'\in\lambda'L$ for distinct 
$\alpha',\beta'\in\lambda'L$. Equivalently, $\alpha+\beta=1$
where $\alpha=\alpha'/(\alpha'+\beta')$ and $\beta=\beta'/(\alpha'+\beta')$
in $L$ are distinct. Clearly
$W(\lambda'L)\cong W(\lambda''L)$ as $N$-modules if
and only if $\lambda'L=\lambda''L$. Therefore $W(\lambda'L)\cong
W(\lambda''L)$ as $G_L$-modules if and only if $\lambda'L=\lambda''L$.
In summary, the $W(\lambda'L)$ provide $|\E^\times:L|$ pairwise
non-isomorphic absolutely irreducible ESQ $\F_q G_L$-modules of
dimension $|L|$.

Suppose now that $q\not\equiv 1\pmod r$, and $q\in L$ holds.
We temporarily enlarge the field of scalars from $\F_q$
to the finite field $\F_q(\zeta)$, where $\zeta$ has order
$r$. View $W(\lambda'L)$ as an absolutely irreducible ESQ
$\F_q(\zeta) G_L$-module. We shall show that there exists an $\F_q G_L$-module 
$U(\lambda'L)$ which satisfies $W(\lambda'L)\cong
U(\lambda'L)\otimes_{\F_q} \F_q(\zeta)$. It then follows that the
$U(\lambda'L)$ are pairwise non-isomorphic absolutely irreducible ESQ
$\F_q G_L$-modules of dimension $|L|$. By a theorem of Brauer
(see \cite[Theorem~VII.1.17]{HB} and \cite{GH}), the module $U(\lambda'L)$
exists if and only if the character $\chi$ of $W(\lambda'L)$ has
values in $\F_q$. We shall show that $\chi(\sigma,\lambda,\mu)\in\F_q$
for all $(\sigma,\lambda,\mu)\in G_L$ by showing
$\chi(\sigma,\lambda,\mu)^q=\chi(\sigma,\lambda,\mu)$. By equation (\ref{e4})
\[
  \chi(\sigma,\lambda,\mu)=
  \sum_{\{\alpha\in\lambda'L\,\mid\,(\alpha\sigma)\lambda^{-1}=\alpha\}}
  \zeta^{-T((\alpha\sigma)\lambda^{-1}\mu)}=
  \sum_{\{\alpha\in\lambda'L\,\mid\,(\alpha\sigma)\alpha^{-1}=\lambda\}}
  \zeta^{-T(\alpha\mu)}.
\]
However, 
$\{\alpha\in\lambda'L\mid(\alpha\sigma)\alpha^{-1}=\lambda\}=
\{\alpha q\in\lambda'L\mid((\alpha q)\sigma)(\alpha q)^{-1}=\lambda\}$
as $q\in L$ is fixed by $\sigma$. Therefore
$\chi(\sigma,\lambda,\mu)=\chi(\sigma,\lambda,\mu)^q$ as desired.

If $L=\E^\times$, then $G_L=\AGammaL_1(\E)$. Moreover,
$q\in\E^\times$ holds as $t,q$ are powers of distinct primes. If
$t>r$, then take 
$\alpha$ to be an element of $\E^\times$ not in $\F_r^\times$, and
so that $\beta=1-\alpha$ satisfies $\alpha+\beta=1$ and
$\alpha\ne\beta$. If $t=r$, then take $\alpha=2$ and $\beta=-1$.
Clearly $\alpha+\beta=1$ and $\alpha\ne\beta$
provided $r>3$. This proves that $\AGammaL_1(\E)$ is an absolutely
irreducible ESQ-subgroup of $\GL_{t-1}(\F_q)$ if $t>3$ and $\gcd(t,q)=1$.
\end{proof}

We next prove Theorem~\ref{main}(e).

\begin{proof}[Proof of Theorem~$\ref{main}$(e)]
Consider parts (i) and (iii). Let $p$ be an odd prime, and let $q=p^k$.
Let $V=(\F_q)^3$ be the natural $\SO_3(q)$-module.
Choose a basis $x_1,x_2,x_3$ for $V$, and the basis
$x_2\wedge x_3,x_3\wedge x_1,x_1\wedge x_2$ for $\VwedgeV$. The matrix
of $g\wedge g$ is $\det(g)(g^{-1})^\T$. As $\det(g)=1$ and
$g^\T g=I$, it follows that $g\wedge g=g$ and so $V$
is an irreducible ESQ $\SO_3(q)$-module. By Theorem~\ref{UCSconstr}(b)
there exists an exponent-$p^2$ UCS $p$-group of order~$q^6$.
This proves part~(i). Similarly, by the last sentence of
Theorem~\ref{AGammaL}, $\AGammaL_1(8)$ is an absolutely irreducible
ESQ-subgroup of $\GL_7(\F_q)$ for odd $q$. Part~(iii) now follows by
Theorem~\ref{UCSconstr}(b).

Consider part~(ii). Parts (a) and (b) of Theorem~\ref{UCSconstr} are
true with $k=1$. Thus if $G$ is an exponent-$p^2$ UCS-group of order $p^{10}$,
then $\Aut(G)^{\overline G}$ is an irreducible ESQ-subgroup of $\GL_5(p)$.
It follows from
Theorem~\ref{d=5} (with $q=p$) that $p^5\equiv1\pmod{11}$. (Additionally,
$|\Aut(G)^{\overline G}|$ is divisible by $11$.) Conversely,
if $p^5\equiv1\pmod{11}$, or more generally if $q=p^k$ satisfies
$q^5\equiv1\pmod{11}$, then there exists an exponent-$p^2$ UCS-group
of order $q^{10}$ by Theorems~\ref{d=5} and \ref{UCSconstr}.
This proves part~(ii). 
\end{proof}

Note that $q^{12}=(q^2)^6$ and so by Theorem~\ref{main}(e)(i) there
exist UCS-groups of order $q^{12}$ and exponent $p^2$ for all
powers $q$ of an odd prime $p$. 

\section*{Acknowledgements}

We are grateful to Eamonn O'Brien for making available to us a {\sc Magma}
\cite{Magma}
database of class-2 $p$-groups with 4 generators and exponent $p$; and to
P\'al Heged\H us for his valuable comments.
The second and the third authors were supported by the Hungarian Scientific 
Research Fund (OTKA) grants NK725223 and~NK72845. Much of the research 
by the third author was carried out 
in the Computer and Automation Research Institute of the Hungarian 
Academy of Sciences (MTA SZTAKI). In addition he was supported by the 
research grant PTDC/MAT/101993/2008 of the 
{\em Funda\c c\~ao para a Ci\^ encia e a Tecnologia} (Portugal).

\frenchspacing

\begin{thebibliography}{WWT{\etalchar{+}}08}
\providecommand{\url}[1]{\texttt{#1}}
\providecommand{\urlprefix}{}
\expandafter\ifx\csname urlstyle\endcsname\relax
  \providecommand{\doi}[1]{doi:\discretionary{}{}{}#1}\else
  \providecommand{\doi}{doi:\discretionary{}{}{}\begingroup
  \urlstyle{rm}\Url}\fi
\providecommand{\eprint}[2][]{\url{#2}}

\bibitem[Asc84]{A84}
M.~Aschbacher.
\newblock On the maximal subgroups of the finite classical groups.
\newblock \emph{Invent. Math.}, 76(3):469--514, 1984.

\bibitem[BCP97]{Magma}
W.~Bosma, J.~Cannon, and C.~Playoust.
\newblock The {M}agma algebra system {I}: {T}he user language.
\newblock \emph{J. Symbolic Comput.}, 24(3-4):235--265, 1997.

\bibitem[Bra40]{brahana}
H.~R. Brahana.
\newblock Finite metabelian groups and {P}l\"ucker line-coordinates.
\newblock \emph{Amer. J. Math.}, 62:365--379, 1940.

\bibitem[DMW79]{DW79}
L.~Di~Martino and A.~Wagner.
\newblock The irreducible subgroups of {${\rm PSL}(V\sb{5},\,q)$}, where {$q$}
  is odd.
\newblock \emph{Resultate Math.}, 2(1):54--61, 1979.

\bibitem[GAP07]{GAP}
The GAP~Group.
\newblock \emph{{GAP -- Groups, Algorithms, and Programming, Version 4.4.10}},
  2007.
\newblock \urlprefix\url{http://www.gap-system.org/}.

\bibitem[GH97]{GH}
S.~P. Glasby and R.~B. Howlett.
\newblock Writing representations over minimal fields.
\newblock \emph{Comm. Algebra}, 25(6):1703--1711, 1997.

\bibitem[GQ06]{GQ06}
R.~Gow and R.~Quinlan.
\newblock Covering groups of rank 1 of elementary abelian groups.
\newblock \emph{Comm. Algebra}, 34(4):1419--1433, 2006.

\bibitem[Hig60]{hig}
G.~Higman.
\newblock Enumerating {$p$}-groups {I}: {I}nequalities.
\newblock \emph{Proc. London Math. Soc.}, 10(3):24--30, 1960.


\bibitem[Hup67]{huppert}
B. Huppert.
\newblock\emph{Endliche Gruppen {I}}, volume 134 of \emph{Grundlehren
  der Mathematischen Wissenschaften}.
\newblock Springer-Verlag, Berlin, New York, 1967.

\bibitem[HB82]{HB}
B.~Huppert and N.~Blackburn.
\newblock \emph{Finite groups {II}}, volume 242 of \emph{Grundlehren der
  Mathematischen Wissenschaften.
}
\newblock Springer-Verlag, Berlin, New York, 1982.



\bibitem[KL90]{kl}
P.~B.~Kleidman and M.~Liebeck.
\newblock \emph{The subgroup structure of the finite classical groups}, volume
  129 of \emph{London Mathematical Society Lecture Note Series}.
\newblock Cambridge University Press, Cambridge, 1990.

\bibitem[Lam73]{Lam}
T.~Y. Lam.
\newblock \emph{The algebraic theory of quadratic forms}.
\newblock W. A. Benjamin, Inc., Reading, Mass., 1973.
\newblock Mathematics Lecture Note Series.


\bibitem[NOVL04]{p6}
M.~F. Newman, E.~A. O'Brien, and M.~R. Vaughan-Lee.
\newblock Groups and nilpotent {L}ie rings whose order is the sixth power of a
  prime.
\newblock \emph{J. Algebra}, 278(1):383--401, 2004.

\bibitem[O'B90]{obrien}
E.~A. O'Brien.
\newblock The {$p$}-group generation algorithm.
\newblock \emph{J. Symbolic Comput.}, 9(5-6):677--698, 1990.

\bibitem[OVL05]{p7}
E.~A. O'Brien and M.~R. Vaughan-Lee.
\newblock The groups with order {$p\sp 7$} for odd prime {$p$}.
\newblock \emph{J. Algebra}, 292(1):243--258, 2005.

\bibitem[Rob96]{rob}
D.~J.~S. Robinson.
\newblock \emph{A course in the theory of groups}, volume~80 of \emph{Graduate
  Texts in Mathematics}.
\newblock Springer-Verlag, New York, second edition, 1996.

\bibitem[Tau55]{T55}
D.~R. Taunt.
\newblock Finite groups having unique proper characteristic subgroups {I}.
\newblock \emph{Proc. Cambridge Philos. Soc.}, 51:25--36, 1955.

\bibitem[Tay92]{taylor}
D.~E. Taylor.
\newblock \emph{The geometry of the classical groups}, volume~9 of \emph{Sigma
  Series in Pure Mathematics}.
\newblock Heldermann Verlag, Berlin, 1992.

\bibitem[Wag78]{Wag78}
A.~Wagner.
\newblock The subgroups of {${\rm PSL}(5,\,2\sp{a})$}.
\newblock \emph{Resultate Math.}, 1(2):207--226, 1978.

\bibitem[WWT{\etalchar{+}}08]{Atlas}
R.~Wilson, P.~Walsh, J.~Tripp, I.~Suleiman, R.~Parker, S.~Norton, S.~Nickerson,
  S.~Linton, J.~Bray, and R.~Abbott.
\newblock \emph{Atlas of finite group representations, Version 2}, 2008.
\newblock \urlprefix\url{http://brauer.maths.qmul.ac.uk/Atlas/}.

\end{thebibliography}
\newcommand{\etalchar}[1]{$^{#1}$}
\def\cprime{$'$}

\end{document}